\numberwithin{equation}{section}
\theoremstyle{plain}
\newtheorem{theorem}[equation]{Theorem}
\newtheorem{corollary}[equation]{Corollary}
\newtheorem{lemma}[equation]{Lemma}
\newtheorem{proposition}[equation]{Proposition}
\theoremstyle{definition}
\newtheorem{definition}[equation]{Definition}
\numberwithin{equation}{section}
\newcommand{\R}{{\mathbb R}}
\newcommand{\N}{{\mathbb N}}
\newcommand{\Om}{\Omega}
\providecommand{\vint}[1]{\mathchoice
          {\mathop{\vrule width 5pt height 3 pt depth -2.5pt
                  \kern -9pt \kern 1pt\intop}\nolimits_{\kern -5pt{#1}}}
          {\mathop{\vrule width 5pt height 3 pt depth -2.6pt
                  \kern -6pt \intop}\nolimits_{\kern -3pt{#1}}}
          {\mathop{\vrule width 5pt height 3 pt depth -2.6pt
                  \kern -6pt \intop}\nolimits_{\kern -3pt{#1}}}
          {\mathop{\vrule width 5pt height 3 pt depth -2.6pt
                  \kern -6pt \intop}\nolimits_{\kern -3pt{#1}}}}
\newcommand{\eps}{\varepsilon}
\newcommand{\loc}{\mathrm{loc}}
\newcommand{\BV}{\mathrm{BV}}
\newcommand{\ch}{\text{\raise 1.3pt \hbox{$\chi$}\kern-0.2pt}}
\DeclareMathOperator{\Mod}{Mod}
\DeclareMathOperator{\capa}{Cap}
\DeclareMathOperator{\rcapa}{cap}
\DeclareMathOperator{\dist}{dist}
\DeclareMathOperator{\diam}{diam}
\DeclareMathOperator{\Lip}{Lip}
\DeclareMathOperator*{\esssup}{ess\,sup}
\DeclareMathOperator{\supp}{spt}
\begin{document}
\title{A new Federer-type characterization\\
 of sets of finite perimeter
in metric spaces
\footnote{{\bf 2010 Mathematics Subject Classification}: 30L99, 31E05, 26B30
\hfill \break {\it Keywords\,}: set of finite perimeter, Federer's characterization,
measure-theoretic boundary, lower density, metric measure space, function of least gradient
}}
\author{Panu Lahti}
\maketitle

\begin{abstract}
Federer's characterization states that a set $E\subset \R^n$ is of finite perimeter
if and only if $\mathcal H^{n-1}(\partial^*E)<\infty$. Here the measure-theoretic
boundary $\partial^*E$ consists of those points where both $E$ and its complement have
positive upper density.
We show that the characterization remains true if
$\partial^*E$ is replaced by a smaller boundary
consisting of those points where the \emph{lower} densities of both $E$
and its complement are at least a given number.
This result is new even in Euclidean spaces but we prove
it in a more general complete metric space that is equipped
with a doubling measure and supports a Poincar\'e inequality.
\end{abstract}

\section{Introduction}

Federer's \cite{Fed} characterization of sets of finite perimeter
states that a set $E\subset \R^n$ is of finite perimeter if and only if
$\mathcal H^{n-1}(\partial^*E)<\infty$,
where $\mathcal H^{n-1}$ is the $n-1$-dimensional Hausdorff measure and
$\partial^*E$ is the measure-theoretic boundary;
see Section \ref{sec:preliminaries} for definitions.
A similar characterization holds also in the abstract setting of complete metric spaces
$(X,d,\mu)$
that are equipped with a doubling measure $\mu$ and support a Poincar\'e inequality;
in such spaces one replaces the $n-1$-dimensional Hausdorff measure
with the \emph{codimension one} Hausdorff measure $\mathcal H$.
 The ``only if'' direction of the characterization was
 shown in metric spaces by Ambrosio \cite{A1}, and the ``if'' direction was recently
 shown by the author \cite{L-Fedchar}.

Federer also showed that if a set $E\subset \R^n$ is of finite perimeter, 
then $\mathcal H^{n-1}(\partial^*E\setminus \Sigma_{1/2}E)=0$,
where the boundary $\Sigma_{1/2}E$ consists of those points where both
$E$ and its complement have density exactly $1/2$.
In metric spaces we similarly have
$\mathcal H(\partial^*E\setminus \Sigma_{\gamma}E)=0$,
where $0<\gamma\le 1/2$ is a suitable constant depending on the space
and the \emph{strong boundary} $\Sigma_{\gamma}E$ is defined by
\[
\Sigma_{\gamma} E:=\left\{x\in X:\, \liminf_{r\to 0}\frac{\mu(B(x,r)\cap E)}{\mu(B(x,r))}\ge \gamma\ \ \textrm{and}\ \ \liminf_{r\to 0}\frac{\mu(B(x,r)\setminus E)}{\mu(B(x,r))}\ge \gamma\right\}.
\]
This raises the natural question of whether the condition
$\mathcal H(\Sigma_{\beta} E)<\infty$
for some $\beta>0$, which appears much weaker than $\mathcal H(\partial^* E)<\infty$,
is already enough to imply that $E$
is of finite perimeter. 
Recently Chleb\'ik \cite{Chl}
posed this question in Euclidean spaces and noted that
the (positive) answer is known only when $n=1$.

In the current paper we show that this characterization does indeed hold in every Euclidean space and even in the much more general metric spaces that we consider.

\begin{theorem}\label{thm:main theorem}
Let $(X,d,\mu)$ be a complete metric space with $\mu$ doubling and supporting
a $(1,1)$-Poincar\'e inequality.
Let $\Om\subset X$ be an open set and let $E\subset X$ be a $\mu$-measurable set
with $\mathcal H(\Sigma_{\beta} E\cap \Om)<\infty$,
where $0<\beta\le 1/2$ only depends on the doubling constant of the measure
and the constants in the Poincar\'e inequality. Then $P(E,\Om)<\infty$.
\end{theorem}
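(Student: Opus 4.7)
My plan is to reduce to the Federer-type characterization of~\cite{L-Fedchar}, which guarantees $P(F,\Om)<\infty$ whenever $\mathcal H(\partial^{*} F\cap\Om)<\infty$. Since $\Sigma_\beta E\subset\partial^{*}E$ with possibly strict inclusion, I cannot apply this to $E$ directly: the difference $\partial^{*}E\setminus\Sigma_\beta E$ may carry infinite $\mathcal H$-measure. I will therefore \emph{modify} $E$ on a carefully chosen family of balls to obtain a set $\widetilde E$ whose measure-theoretic boundary, modulo $\mathcal H$-null sets, is contained in $\Sigma_\beta E$ together with the codimension-one boundaries of the modifying balls, invoke~\cite{L-Fedchar} for $\widetilde E$, and transfer the conclusion back to $E$.

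\textbf{Construction and main steps.} \textbf{(i)} Let $A:=(\partial^{*}E\setminus\Sigma_\beta E)\cap\Om$. For each $x\in A$, one of the two lower densities falls strictly below $\beta$, so there exist arbitrarily small radii $r$ for which $B(x,r)$ is either \emph{$E$-thin} ($\mu(B(x,r)\cap E)<\beta\mu(B(x,r))$) or $E^c$-thin. Use a $5r$-covering argument together with a scale-calibrated choice of radii to extract a countable disjoint family $\{B_i=B(x_i,r_i)\}$ of such thin balls whose suitable enlargements cover $A$. \textbf{(ii)} Set
\[
\widetilde E:=\Bigl(E\setminus\bigcup_{i}B_i\Bigr)\cup\bigcup_{i:\,B_i\text{ is }E^c\text{-thin}}B_i.
\]
Thinness together with Lebesgue differentiation yields, modulo $\mathcal H$-null sets, $\partial^{*}\widetilde E\cap\Om\subset\Sigma_\beta E\cup\bigcup_{i}\partial B_i$. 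Combining the standard estimate $\mathcal H(\partial B_i)\lesssim\mu(B_i)/r_i$ with the packing bound
\[
\sum_{i}\frac{\mu(B_i)}{r_i}\lesssim\mathcal H(\Sigma_\beta E\cap\Om)
\]
then gives $\mathcal H(\partial^{*}\widetilde E\cap\Om)<\infty$, and~\cite{L-Fedchar} yields $P(\widetilde E,\Om)<\infty$. \textbf{(iii)} Since $\chi_E-\chi_{\widetilde E}=\sum_i\pm\chi_{F_i}$ with $F_i\subset B_i$ and $\mu(F_i)\le\beta\mu(B_i)$, use subadditivity $P(E,\Om)\le P(\widetilde E,\Om)+\sum_iP(F_i,X)$ in conjunction with a relative-isoperimetric bound for each thin piece $F_i$ inside $B_i$ to conclude $P(E,\Om)<\infty$.

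\textbf{Main obstacle.} The delicate step is establishing the packing estimate $\sum_i\mu(B_i)/r_i\lesssim\mathcal H(\Sigma_\beta E\cap\Om)$. A naive application of the $5r$-lemma merely controls this sum by $\mathcal H(A)$, which may be infinite a priori; nor are the $B_i$ automatically close to $\Sigma_\beta E$, so a direct Hausdorff packing against the a~priori finite set $\Sigma_\beta E$ is not available. Overcoming this requires a multi-scale (stopping-time or Whitney-type) selection that exploits the defining feature of $\Sigma_\beta E$: at each $x\in A$ the thin-witnessing scales occur \emph{arbitrarily} small, so the thin region ultimately accumulates in a thin collar of $\Sigma_\beta E$, with the Poincar\'e inequality coupling each $B_i$ to a companion ball of comparable size that intersects $\Sigma_\beta E$. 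I expect the admissible value of $\beta$, depending on the doubling and Poincar\'e constants, to be dictated precisely by the balance of constants in this packing argument together with the isoperimetric estimate for $F_i$ in step~(iii); this multi-scale packing, rather than steps (i)--(iii) individually, will be the principal technical hurdle.
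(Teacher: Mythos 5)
Your approach is genuinely different from the paper's and, as written, contains two gaps that I do not see how to close.

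The paper argues by contradiction on a single point $x\in\partial^{*}E\setminus\Sigma_{\beta}E$: after a covering reduction, one knows $\mathcal H(\Sigma_\beta E\cap B(x,R))$ is small relative to $\mu(B(x,R))/R$; one then solves an obstacle problem to produce a small open set $V\supset\Sigma_\beta E\cap B(x,R)$ so that $Z=X\setminus V$ (with the Mazurkiewicz metric) is complete and doubling, applies a chaining argument in $Z$ (Proposition~\ref{prop:strong boundary point} and Corollary~\ref{cor:density points}) to manufacture a strong boundary point of $E$ inside $B(x,R)\setminus V$, and derives a contradiction since all of $\Sigma_\beta E\cap B(x,R)$ was removed into $V$. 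In short, it proves $\mathcal H((\partial^*E\setminus\Sigma_\beta E)\cap\Om)=0$ directly and never modifies $E$. Your route—modify $E$ on a family of thin balls to kill the bad boundary and invoke Federer for the modified set—avoids all of the machinery in Sections~\ref{sec:components}--\ref{sec:constructing a quasiconvex space}, so it would be a considerable simplification if it worked.

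The first gap is the one you flag yourself: the packing estimate $\sum_i\mu(B_i)/r_i\lesssim\mathcal H(\Sigma_\beta E\cap\Om)$. This is not a technical hurdle but essentially the statement to be proven. The thin balls are centered at points of $A=(\partial^*E\setminus\Sigma_\beta E)\cap\Om$, and nothing in the hypotheses forces these balls to accumulate near $\Sigma_\beta E$; that is precisely what the theorem asserts and what the paper's contradiction argument establishes. A priori $\mathcal H(A)$ may be infinite, and the $5r$-lemma only packs against $\mathcal H(A)$. Your proposed remedy (``the thin region ultimately accumulates in a thin collar of $\Sigma_\beta E$'') presupposes the conclusion.

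The second gap is in step~(iii) and, unlike the first, is a hard mathematical error rather than an unproven claim. To bound $P(E,\Om)$ from $P(\widetilde E,\Om)$ you need $\sum_i P(F_i,X)<\infty$ where $F_i\subset B_i$ is the thin piece removed or added. The relative isoperimetric inequality~\eqref{eq:relative isoperimetric inequality} goes the \emph{wrong} way: it gives $\mu(F_i)\lesssim r_i\,P(F_i,B_i)$, i.e.\ a lower bound on perimeter by measure, never an upper bound. A set of tiny measure inside $B_i$ can have arbitrarily large (or infinite) perimeter; indeed, $F_i=E\cap B_i$ and you do not yet know $P(E,B_i)<\infty$ for even one ball, so $P(F_i,X)$ may be $+\infty$ for every $i$. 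This makes step~(iii) circular: to bound the transfer error you would already need the conclusion. There is also a smaller issue in step~(ii): if only the \emph{enlargements} $5B_i$ cover $A$ while you modify $E$ only on the disjoint $B_i$, then $A\setminus\bigcup_i B_i$ survives and can still lie in $\partial^*\widetilde E$, so the inclusion $\partial^*\widetilde E\cap\Om\subset\Sigma_\beta E\cup\bigcup_i\partial B_i$ does not follow as stated.
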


Explicitly, in the Euclidean space $\R^n$ with $n\ge 2$, we can take
(see \eqref{eq:choice of beta in Euclidean space})
\[
\beta=  \frac{n^{13n/2}}{2^{26n^2+64n+15}\omega_n^{13}},
\]
where $\omega_n$ is the volume of the Euclidean unit ball.

Our strategy is to show that if $\mathcal H(\Sigma_{\beta}E\cap \Om)<\infty$,
then $\mathcal H((\partial^*E\setminus \Sigma_{\beta}E)\cap \Om)=0$
and so the result follows from the previously known Federer's characterization.
Our proof consists essentially of two steps.
First in Section \ref{sec:strong boundary points},
we show that for every point in the measure-theoretic boundary $\partial^*E$,
arbitrarily close there is a point in the strong boundary $\Sigma_{\beta} E$.
Then, after some preliminary results concerning connected components of sets
of finite perimeter as well as functions of least gradient in Sections
\ref{sec:components} and \ref{sec:least gradient},
in Section \ref{sec:constructing a quasiconvex space} we
show that there exists an open set $V$ containing a suitable part of $\Sigma_{\beta}E$ such that
$X\setminus V$ is itself a metric space with rather good properties.
Thus we can apply the first step in this space.
In Section \ref{sec:proof of the main result} we combine the two steps to prove
Theorem \ref{thm:main theorem}.

\paragraph{Acknowledgments.}
The author wishes to thank Nageswari Shanmugalingam for many helpful comments
as well as for discussions on
constructing spaces where the Mazurkiewicz metric agrees with the ordinary one;
Anders Bj\"orn also for discussions on constructing such spaces; and
Olli Saari for discussions on finding strong boundary
points.

\section{Notation and definitions}\label{sec:preliminaries}

In this section we introduce the notation, definitions,
and assumptions that are employed in the paper.

Throughout this paper, $(X,d,\mu)$ is a complete metric space that is equip\-ped
with a metric $d$ and a Borel regular outer measure $\mu$ satisfying
a doubling property, meaning that
there exists a constant $C_d\ge 1$ such that
\[
0<\mu(B(x,2r))\le C_d\mu(B(x,r))<\infty
\]
for every ball $B(x,r):=\{y\in X:\,d(y,x)<r\}$, with $x\in X$ and $r>0$.
Closed balls are denoted by $\overline{B}(x,r):=\{y\in X:\,d(y,x)\le r\}$.
By iterating the doubling condition, we obtain that for every $x\in X$ and $y\in B(x,R)$ with $0<r\le R<\infty$, we have
\begin{equation}\label{eq:homogenous dimension}
\frac{\mu(B(y,r))}{\mu(B(x,R))}\ge \frac{1}{C_d^2}\left(\frac{r}{R}\right)^{s},
\end{equation}
where $s>1$ only depends on the doubling constant $C_d$.
Given a ball $B=B(x,r)$ and $\beta>0$, we sometimes abbreviate $\beta B:=B(x,\beta r)$;
note that in a metric space, a ball (as a set) does not necessarily have a unique center point and radius, but these will be prescribed for all the balls
that we consider.

We assume that $X$ consists of at least $2$ points.
When we want to state that a constant $C$
depends on the parameters $a,b, \ldots$, we write $C=C(a,b,\ldots)$.
When a property holds outside a set of $\mu$-measure zero, we say that it holds
almost everywhere, abbreviated a.e.

All functions defined on $X$ or its subsets will take values in $[-\infty,\infty]$.
As a complete metric space equipped with a doubling measure, $X$ is proper,
that is, closed and bounded sets are compact.
Since $X$ is proper, for any open set $\Omega\subset X$
we define $L_{\loc}^1(\Omega)$ to be the space of
functions that are in $L^1(\Om')$ for every open $\Omega'\Subset\Omega$.
Here $\Omega'\Subset\Omega$ means that $\overline{\Omega'}$ is a
compact subset of $\Omega$.
Other local spaces of functions are defined analogously.

For any set $A\subset X$ and $0<R<\infty$, the restricted Hausdorff content
of codimension one is defined by
\[
\mathcal{H}_{R}(A):=\inf\left\{ \sum_{j\in I}
\frac{\mu(B(x_{j},r_{j}))}{r_{j}}:\,A\subset
\bigcup_{j\in I}B(x_{j},r_{j}),\,r_{j}\le R,\,I\subset\N\right\}.
\]
The codimension one Hausdorff measure of $A\subset X$ is then defined by
\[
\mathcal{H}(A):=\lim_{R\rightarrow 0}\mathcal{H}_{R}(A).
\]
In the Euclidean space $\R^n$ (equipped with the Euclidean metric and the $n$-dimensional Lebesgue measure) this is comparable to the $n-1$-dimensional
Hausdorff measure.

By a curve we mean a rectifiable continuous mapping from a compact interval of the real line into $X$.
The length of a curve $\gamma$
is denoted by $\ell_{\gamma}$. We will assume every curve to be parametrized
by arc-length, which can always be done (see e.g. \cite[Theorem~3.2]{Hj}).
A nonnegative Borel function $g$ on $X$ is an upper gradient 
of a function $u$
on $X$ if for all nonconstant curves $\gamma$, we have
\begin{equation}\label{eq:definition of upper gradient}
|u(x)-u(y)|\le \int_{\gamma} g\,ds:=\int_0^{\ell_{\gamma}} g(\gamma(s))\,ds,
\end{equation}
where $x$ and $y$ are the end points of $\gamma$.
We interpret $|u(x)-u(y)|=\infty$ whenever  
at least one of $|u(x)|$, $|u(y)|$ is infinite.
Upper gradients were originally introduced in \cite{HK}.

The $1$-modulus of a family of curves $\Gamma$ is defined by
\[
\Mod_{1}(\Gamma):=\inf\int_{X}\rho\, d\mu
\]
where the infimum is taken over all nonnegative Borel functions $\rho$
such that $\int_{\gamma}\rho\,ds\ge 1$ for every curve $\gamma\in\Gamma$.
A property is said to hold for $1$-a.e. curve
if it fails only for a curve family with zero $1$-modulus. 
If $g$ is a nonnegative $\mu$-measurable function on $X$
and (\ref{eq:definition of upper gradient}) holds for $1$-a.e. curve,
we say that $g$ is a $1$-weak upper gradient of $u$. 
By only considering curves $\gamma$ in a set $A\subset X$,
we can talk about a function $g$ being a ($1$-weak) upper gradient of $u$ in $A$.\label{curve discussion}

Given an open set $\Om\subset X$, we let
\[
\Vert u\Vert_{N^{1,1}(\Om)}:=\Vert u\Vert_{L^1(\Om)}+\inf \Vert g\Vert_{L^1(\Om)},
\]
where the infimum is taken over all upper gradients $g$ of $u$ in $\Om$.
Then we define the Newton-Sobolev space
\[
N^{1,1}(\Om):=\{u:\|u\|_{N^{1,1}(\Om)}<\infty\}.
\]
In $\R^n$ this coincides, up to a choice of pointwise representatives,
with the usual Sobolev space $W^{1,1}(\Om)$; this is shown in
Theorem 4.5 of \cite{S}, where the Newton-Sobolev space was originally
introduced.

We understand Newton-Sobolev functions to be defined at every point $x\in \Om$
(even though $\Vert \cdot\Vert_{N^{1,1}(\Om)}$ is then only a seminorm).
It is known that for every $u\in N_{\loc}^{1,1}(\Om)$ there exists a minimal $1$-weak
upper gradient of $u$ in $\Om$, always denoted by $g_{u}$, satisfying $g_{u}\le g$ 
a.e. in $\Om$ for any other  $1$-weak upper gradient $g\in L_{\loc}^{1}(\Om)$
of $u$ in $\Om$, see \cite[Theorem 2.25]{BB}.
In $\R^n$, the minimal $1$-weak upper gradient coincides (a.e.) with $|\nabla u|$,
see \cite[Corollary A.4]{BB}.

We will assume throughout the paper that $X$ supports a $(1,1)$-Poincar\'e inequality,
meaning that there exist constants $C_P\ge 1$ and $\lambda \ge 1$ such that for every
ball $B(x,r)$, every $u\in L^1_{\loc}(X)$,
and every upper gradient $g$ of $u$,
we have
\[
\vint{B(x,r)}|u-u_{B(x,r)}|\, d\mu 
\le C_P r\vint{B(x,\lambda r)}g\,d\mu,
\]
where 
\[
u_{B(x,r)}:=\vint{B(x,r)}u\,d\mu :=\frac 1{\mu(B(x,r))}\int_{B(x,r)}u\,d\mu.
\]

As \label{quasiconvex and geodesic}a complete metric space equipped with a doubling measure and supporting a Poincar\'e
inequality, $X$ is \emph{quasiconvex}, meaning that for every
pair of points $x,y\in X$ there is a curve $\gamma$ with $\gamma(0)=x$,
$\gamma(\ell_{\gamma})=y$, and $\ell_{\gamma}\le Cd(x,y)$,
where $C$ is a constant and
only depends on $C_d$ and $C_P$, see e.g. \cite[Theorem 4.32]{BB}.
Thus a biLipschitz change in the metric gives a geodesic space
(see \cite[Section 4.7]{BB}).
Since Theorem \ref{thm:main theorem}
is easily seen to be invariant under such a biLipschitz
change in the metric, we can assume that $X$ is geodesic.
By \cite[Theorem 4.39]{BB}, in the Poincar\'e inequality we can now choose
$\lambda=1$.

The $1$-capacity of a set $A\subset X$ is defined by
\[
\capa_1(A):=\inf \Vert u\Vert_{N^{1,1}(X)},
\]
where the infimum is taken over all functions $u\in N^{1,1}(X)$ satisfying
$u\ge 1$ in $A$.
The variational $1$-capacity of a set $A\subset \Om$
with respect to an open set $\Om\subset X$ is defined by
\[
\rcapa_1(A,\Om):=\inf \int_X g_u \,d\mu,
\]
where the infimum is taken over functions $u\in N^{1,1}(X)$ satisfying
$u=0$ in $X\setminus\Om$ and
$u\ge 1$ in $A$, and $g_u$ is the minimal $1$-weak upper gradient of $u$ (in $X$).
By truncation, we see that we can assume $0\le u\le 1$ on $X$.
The variational $1$-capacity is an outer
capacity in the sense that if $A\Subset \Om$, then
\begin{equation}\label{eq:rcapa outer capacity}
\rcapa_{1}(A,\Om)
=\inf_{\substack{V\textrm{ open} \\A\subset V\subset \Om}}\rcapa_{1}(V,\Om);
\end{equation}
see \cite[Theorem 6.19(vii)]{BB}.
For basic properties satisfied by capacities, such as monotonicity and countable subadditivity, see e.g. \cite{BB}.

We say that a set $U\subset X$ is $1$-quasiopen\label{quasiopen}
if for every $\eps>0$ there exists an
open set $G\subset X$ such that $\capa_1(G)<\eps$ and $U\cup G$ is open.

Next we present the definition and basic properties of functions
of bounded variation on metric spaces, following \cite{M}. See also e.g. \cite{AFP, EvGa, Fed, Giu84, Zie89} for the classical 
theory in the Euclidean setting.
Given an open set $\Om\subset X$ and a function $u\in L^1_{\loc}(\Om)$,
we define the total variation of $u$ in $\Om$ by
\[
\|Du\|(\Om):=\inf\left\{\liminf_{i\to\infty}\int_\Om g_{u_i}\,d\mu:\, u_i\in N^{1,1}_{\loc}(\Om),\, u_i\to u\textrm{ in } L^1_{\loc}(\Om)\right\},
\]
where each $g_{u_i}$ is the minimal $1$-weak upper gradient of $u_i$
in $\Om$.
In $\R^n$ this agrees with the usual Euclidean definition involving distributional
derivatives, see e.g. \cite[Proposition 3.6, Theorem 3.9]{AFP}.
(In \cite{M}, local Lipschitz constants were used in place of upper gradients, but the theory
can be developed similarly with either definition.)
We say that a function $u\in L^1(\Om)$ is of bounded variation, 
and denote $u\in\BV(\Om)$, if $\|Du\|(\Om)<\infty$.
For an arbitrary set $A\subset X$, we define
\[
\|Du\|(A):=\inf\{\|Du\|(W):\, A\subset W,\,W\subset X
\text{ is open}\}.
\]

If $u\in L^1_{\loc}(\Om)$ and $\Vert Du\Vert(\Omega)<\infty$,
then $\|Du\|(\cdot)$ is
a Borel regular outer measure on $\Omega$ by \cite[Theorem 3.4]{M}.
A $\mu$-measurable set $E\subset X$ is said to be of finite perimeter if $\|D\ch_E\|(X)<\infty$, where $\ch_E$ is the characteristic function of $E$.
The perimeter of $E$ in $\Omega$ is also denoted by
\[
P(E,\Omega):=\|D\ch_E\|(\Omega).
\]
The measure-theoretic interior of a set $E\subset X$ is defined by
\begin{equation}\label{eq:measure theoretic interior}
I_E:=
\left\{x\in X:\,\lim_{r\to 0}\frac{\mu(B(x,r)\setminus E)}{\mu(B(x,r))}=0\right\},
\end{equation}
and the measure-theoretic exterior by
\[
O_E:=
\left\{x\in X:\,\lim_{r\to 0}\frac{\mu(B(x,r)\cap E)}{\mu(B(x,r))}=0\right\}.
\]
The measure-theoretic boundary $\partial^{*}E$ is defined as the set of points
$x\in X$
at which both $E$ and its complement have nonzero upper density, i.e.
\[
\limsup_{r\to 0}\frac{\mu(B(x,r)\cap E)}{\mu(B(x,r))}>0\quad
\textrm{and}\quad\limsup_{r\to 0}\frac{\mu(B(x,r)\setminus E)}{\mu(B(x,r))}>0.
\]
Note that the space $X$ is always partitioned into the disjoint sets
$I_E$, $O_E$, and $\partial^*E$.
By Lebesgue's differentiation theorem (see e.g. \cite[Chapter 1]{Hei}),
for a $\mu$-measurable set $E$ we have $\mu(E\Delta I_E)=0$,
where $\Delta$ is the symmetric difference.

Given a number $0<\gamma\le 1/2$, we also define the strong boundary
\begin{equation}\label{eq:strong boundary}
\Sigma_{\gamma} E:=\left\{x\in X:\, \liminf_{r\to 0}\frac{\mu(B(x,r)\cap E)}{\mu(B(x,r))}\ge \gamma\ \, \textrm{and}\ \, \liminf_{r\to 0}\frac{\mu(B(x,r)\setminus E)}{\mu(B(x,r))}\ge \gamma\right\}.
\end{equation}
For an open set $\Omega\subset X$ and a $\mu$-measurable set $E\subset X$ with
$P(E,\Omega)<\infty$, we have
$\mathcal H((\partial^*E\setminus \Sigma_{\gamma}E)\cap\Om)=0$
for $\gamma \in (0,1/2]$ that only depends on $C_d$ and $C_P$,
see \cite[Theorem 5.4]{A1}.
Moreover, for any Borel set $A\subset\Omega$ we have
\begin{equation}\label{eq:def of theta}
P(E,A)=\int_{\partial^{*}E\cap A}\theta_E\,d\mathcal H,
\end{equation}
where
$\theta_E\colon \Om\to [\alpha,C_d]$ with $\alpha=\alpha(C_d,C_P)>0$, see \cite[Theorem 5.3]{A1} 
and \cite[Theorem 4.6]{AMP}.

The following coarea formula is given in \cite[Proposition 4.2]{M}:
if $\Omega\subset X$ is an open set and $u\in L^1_{\loc}(\Omega)$, then
\begin{equation}\label{eq:coarea}
\|Du\|(\Omega)=\int_{-\infty}^{\infty}P(\{u>t\},\Omega)\,dt,
\end{equation}
where we abbreviate $\{u>t\}:=\{x\in \Om:\,u(x)>t\}$.
If $\Vert Du\Vert(\Om)<\infty$, then \eqref{eq:coarea} holds with $\Om$ replaced by
any Borel set $A\subset \Om$.

We know that for an open set $\Om\subset X$, an arbitrary set $A\subset \Om$,
and any $\mu$-measurable sets
$E_1,E_2\subset X$, we have
\begin{equation}\label{eq:lattice property of sets of finite perimeter}
P(E_1\cap E_2,A)+P(E_1\cup E_2,A)\le P(E_1,A)+P(E_2,A);
\end{equation}
for a proof in the case $A=\Om$ see \cite[Proposition 4.7]{M},
and then the general case follows by approximation.
Using this fact as well as the lower semicontinuity of the total
variation with respect to $L_{\loc}^1$-convergence in open sets, we have
for any $E_1,E_2\ldots \subset X$ that
\begin{equation}\label{eq:perimeter of countable union}
P\Bigg(\bigcup_{j=1}^{\infty}E_j,\Om\Bigg)
\le \sum_{j=1}^{\infty}P(E_j,\Om).
\end{equation}

Applying the Poincar\'e inequality to sequences of approximating
$N^{1,1}_{\loc}$-functions in the definition of the total variation, we get
the following $\BV$ version:
for every ball $B(x,r)$ and every 
$u\in L^1_{\loc}(X)$, we have
\[
\int_{B(x,r)}|u-u_{B(x,r)}|\,d\mu
\le C_P r \Vert Du\Vert (B(x, r)).
\]
Recall here and from now on that we take the constant $\lambda$ to be $1$,
and so it does not appear in the inequalities.
For a $\mu$-measurable set $E\subset X$, by considering the two cases
$(\ch_E)_{B(x,r)}\le 1/2$ and $(\ch_E)_{B(x,r)}\ge 1/2$, from the above we get
the relative isoperimetric inequality
\begin{equation}\label{eq:relative isoperimetric inequality}
\min\{\mu(B(x,r)\cap E),\,\mu(B(x,r)\setminus E)\}\le 2 C_P rP(E,B(x,r)).
\end{equation}
From the $(1,1)$-Poincar\'e inequality,
by \cite[Theorem 4.21, Theorem 5.51]{BB}
we also get the following Sobolev inequality:
if $x\in X$, $0<r<\frac{1}{4}\diam X$, and $u\in N^{1,1}(X)$ with $u=0$
in $X\setminus B(x,r)$, then
\begin{equation}\label{eq:sobolev inequality}
\int_{B(x,r)} |u|\,d\mu \le C_S r \int_{B(x,r)}  g_u\,d\mu
\end{equation}
for a constant $C_S=C_S(C_d,C_P)\ge 1$.
For any $\mu$-measurable set $E\subset B(x,r)$, applying the Sobolev inequality
to a suitable sequence approximating $u$,
we get the isoperimetric inequality
\begin{equation}\label{eq:isop inequality with zero boundary values}
\mu(E)\le C_S r P(E,X).
\end{equation}

The lower and upper approximate limits of a function $u$ on an open set
$\Om$
are defined respectively by
\begin{equation}\label{eq:lower approximate limit}
u^{\wedge}(x):
=\sup\left\{t\in\R:\,\lim_{r\to 0}\frac{\mu(B(x,r)\cap\{u<t\})}{\mu(B(x,r))}=0\right\}
\end{equation}
and
\begin{equation}\label{eq:upper approximate limit}
u^{\vee}(x):
=\inf\left\{t\in\R:\,\lim_{r\to 0}\frac{\mu(B(x,r)\cap\{u>t\})}{\mu(B(x,r))}=0\right\}
\end{equation}
for $x\in \Om$.
Unlike Newton-Sobolev functions, we understand $\BV$ functions to be
equivalence classes of a.e. defined functions,
but $u^{\wedge}$ and $u^{\vee}$ are pointwise defined.

The $\BV$-capacity of a set $A\subset X$ is defined by
\[
\capa_{\BV}(A):=\inf \left(\Vert u\Vert_{L^1(X)}+\Vert Du\Vert(X)\right),
\]
where the infimum is taken over all $u\in\BV(X)$ with $u\ge 1$ in a neighborhood of $A$.
By \cite[Theorem 4.3]{HaKi} we know that for some constant
$C_{\textrm{cap}}=C_{\textrm{cap}}(C_d,C_P)\ge 1$ and every
$A\subset X$, we have
\begin{equation}\label{eq:Newtonian and BV capacities are comparable}
\capa_1(A)\le C_{\textrm{cap}}\capa_{\BV}(A).
\end{equation}
We also define a variational $\BV$-capacity for any $A\subset\Om$, with
$\Om\subset X$ open, by
\[
\rcapa^{\vee}_{\BV}(A,\Om):=\inf \Vert Du\Vert(X),
\]
where the infimum is taken over functions $u\in \BV(X)$ such that
$u^{\wedge}=u^{\vee}= 0$ $\mathcal H$-a.e. in $X\setminus \Om$ and
$u^{\vee}\ge 1$ $\mathcal H$-a.e. in $A$.
By \cite[Theorem 5.7]{L-SS} we know that
\begin{equation}\label{eq:variational one and BV capacity}
\rcapa_{1}(A,\Om)\le C_{\textrm{r}}\rcapa^{\vee}_{\BV}(A,\Om)
\end{equation}
for a constant $C_{\textrm{r}}=C_{\textrm{r}}(C_d,C_P)\ge 1$.

\textbf{Standing assumptions:} In Section \ref{sec:strong boundary points}
we will consider a different metric space $Z$ (which will later be taken
to be a subset of $X$), but in Sections \ref{sec:components} to \ref{sec:proof of the main result}
we will assume that $(X,d,\mu)$ is a complete, geodesic metric space that
is equipped with the doubling Radon measure $\mu$ and supports a
$(1,1)$-Poincar\'e inequality with $\lambda=1$.

\section{Strong boundary points}\label{sec:strong boundary points}

In this section we consider a complete metric space
$(Z,\widehat{d},\mu)$ where $\mu$ is a Borel regular outer measure and
doubling with constant $\widehat{C}_d\ge 1$. We define
the Mazurkiewicz metric
\begin{equation}\label{eq:widehat d c}
\widehat{d}_M(x,y):=\inf\{\diam F:\,F\subset Z\textrm{ is a continuum containing }x,y\},
\quad x,y\in Z,
\end{equation}
and we assume the space to be ``geodesic'' in the sense that
$\widehat{d}_M = \widehat{d}$.
As usual, a continuum means a compact connected set.

\begin{definition}
We say that $(x_0,\ldots,x_m)$ is an $\eps$-chain from $x_0$ to $x_m$
if $\widehat{d}(x_j,x_{j+1})<\eps$ for all $j=0,\ldots,m-1$.
\end{definition}

The following proposition gives the existence of a strong boundary point.

\begin{proposition}\label{prop:strong boundary point}
Let $x_0\in Z$, $R>0$, and let $E\subset Z$ be a $\mu$-measurable set
such that
\begin{equation}\label{eq:half measure assumption}
\frac{1}{2\widehat{C}_d^2}\le \frac{\mu(B(x_0,R)\cap E)}{\mu(B(x_0,R))}\le 1-\frac{1}{2\widehat{C}_d^2}.
\end{equation}
Then there exists a point $x\in B(x_0,6 R)$ such that
\begin{equation}\label{eq:desired density point}
\frac{1}{4 \widehat{C}_d^{12}}\le \liminf_{r\to 0}\frac{\mu(B(x,r)\cap E)}{\mu(B(x,r))}
 \le \limsup_{r\to 0}\frac{\mu(B(x,r)\cap E)}{\mu(B(x,r))}
\le 1-\frac{1}{4 \widehat{C}_d^{12}}.
\end{equation}
\end{proposition}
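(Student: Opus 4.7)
The plan is to construct, by iteration, a sequence of balls $B_k = B(y_k, r_k) \subset Z$ with geometrically decreasing radii $r_k \to 0$ and with the density $\mu(B_k \cap E)/\mu(B_k)$ trapped in a fixed balanced range $[\gamma, 1-\gamma]$ for a constant $\gamma = \gamma(\widehat{C}_d) > 0$ (I will take $\gamma = 1/(3\widehat{C}_d^2)$). The initial ball $B_0 := B(x_0, R)$ satisfies this range by \eqref{eq:half measure assumption} since $1/(2\widehat{C}_d^2) \ge \gamma$, and the desired point is $x := \lim_k y_k$, whose lower densities will inherit the balanced property from the $B_k$ up to a handful of doubling factors.

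The key inductive step is: if $B(y, r) \subset Z$ has $\phi_r(y) := \mu(B(y,r)\cap E)/\mu(B(y,r)) \in [\gamma, 1-\gamma]$, then there exist $y' \in Z$ with $\widehat{d}(y', y) \le 3r$ and $r' \le r/10$ such that $\phi_{r'}(y') \in [\gamma, 1-\gamma]$. To prove it, I would use Lebesgue's differentiation theorem to select $z_1 \in B(y, r) \cap E$ of density $1$ and $z_2 \in B(y, r) \setminus E$ of density $0$; both exist because $\gamma > 0$ forces $E$ and its complement to carry positive measure in $B(y, r)$. Invoking the hypothesis $\widehat{d}_M = \widehat{d}$ produces a continuum $F \subset Z$ with $\diam F \le \widehat{d}(z_1, z_2) + \varepsilon \le 2r + \varepsilon$ containing $z_1, z_2$, so $F \subset B(y, 3r)$. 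Choose $r' \le r/10$ small enough that $\phi_{r'}(z_1) > 1 - \gamma$ and $\phi_{r'}(z_2) < \gamma$, and cover $F$ by an $(r'/10)$-chain $z_1 = w_0, w_1, \ldots, w_m = z_2$ of points of $F$ with $\widehat{d}(w_j, w_{j+1}) \le r'/10$. Along the chain, $\phi_{r'}$ drops from above $1-\gamma$ to below $\gamma$; the inclusions $B(w_j, 9r'/10) \subset B(w_{j+1}, r') \subset B(w_j, 11r'/10)$ together with doubling yield the single-step estimate
\[
\phi_{r'}(w_j) > 1 - \gamma \ \Longrightarrow\ \phi_{r'}(w_{j+1}) > \widehat{C}_d^{-2} - \gamma\,\widehat{C}_d^{-1},
\]
which, for $\gamma = 1/(3\widehat{C}_d^2)$, contradicts $\phi_{r'}(w_{j+1}) < \gamma$. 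Hence $\phi_{r'}$ cannot leap across the balanced window between neighboring centers, and some $y' := w_j$ must satisfy $\phi_{r'}(y') \in [\gamma, 1-\gamma]$.

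Iterating this step from $B(x_0, R)$ produces the sequence $(B_k)$; the bound $\widehat{d}(y_{k+1}, y_k) \le 3 r_k$ combined with the geometric ratio $r_{k+1} \le r_k/10$ yields $\widehat{d}(y_k, x_0) \le 3R\cdot(10/9) < 6R$, so the limit $x := \lim_k y_k$ lies in $B(x_0, 6R)$. For any small scale $s$, picking $k$ so that $5 r_k \le s \le 5 r_{k-1}$ and using the containment $B(y_k, r_k) \subset B(x, 5r_k)\subset B(x, s)$ (with $\widehat{d}(x,y_k)\le 4r_k$ from the telescoping sum) combined with a handful of applications of doubling converts $\phi_{r_k}(y_k) \in [\gamma, 1-\gamma]$ into a balanced bound on $\phi_s(x)$ and $1-\phi_s(x)$, and standard bookkeeping of these doubling factors yields the quoted constant $1/(4\widehat{C}_d^{12})$ in \eqref{eq:desired density point}. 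The main technical obstacle will be the single-step comparison along the chain: the function $z \mapsto \phi_{r'}(z)$ is generally \emph{not} continuous (since $\mu$ may charge the spheres $\partial B(\cdot, r')$), so one cannot argue by a straight intermediate-value theorem. The ball inclusions above together with doubling, however, deliver a quantitative multiplicative lower bound which precludes sudden jumps out of the balanced window as long as $\gamma$ is chosen below the doubling-dependent threshold $1/(2\widehat{C}_d^2)$.
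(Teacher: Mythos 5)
Your ``single-step estimate'' is correct: for $\widehat{d}(w_j,w_{j+1})\le r'/10$ the inclusions $B(w_j,9r'/10)\subset B(w_{j+1},r')\subset B(w_j,11r'/10)$ combined with doubling once in each direction do give
\[
\phi_{r'}(w_{j+1})\ \ge\ \frac{\mu(B(w_j,9r'/10))-\mu(B(w_j,r')\setminus E)}{\mu(B(w_j,11r'/10))}\ \ge\ \widehat{C}_d^{-2}-\gamma\,\widehat{C}_d^{-1},
\]
and with $\gamma=1/(3\widehat{C}_d^2)$ this is $>\gamma$, so no chain step can jump across the window. This is essentially the paper's ``Case~2'' argument, restated at a single fixed radius rather than passing to a ball of double the radius as the paper does; both versions work.

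The genuine gap is in the iteration and in the passage to the limit. Your inductive step produces $r'$ via Lebesgue differentiation (``choose $r'\le r/10$ small enough that $\phi_{r'}(z_1)>1-\gamma$ and $\phi_{r'}(z_2)<\gamma$''), so you only get the one-sided bound $r_{k+1}\le r_k/10$ with no lower bound on the ratio $r_{k+1}/r_k$. When you then fix a small scale $s$ and pick $k$ with $5r_k\le s\le 5r_{k-1}$, the comparison $B(y_k,r_k)\subset B(x,5r_k)\subset B(x,s)$ is useless unless $\mu(B(x,s))$ is comparable to $\mu(B(y_k,r_k))$, which requires $s/r_k$ to be bounded by an absolute constant. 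But $s$ can be as large as $5r_{k-1}$ and $r_{k-1}/r_k$ is unbounded, so ``a handful of applications of doubling'' does \emph{not} control $\phi_s(x)$ at these intermediate scales, and the $\liminf$/$\limsup$ in \eqref{eq:desired density point} are not obtained. This is exactly what the paper's ``Case~1'' is designed to avoid: by introducing the bad set $P$ of points where the density dips below $1/(4\widehat{C}_d^6)$ at \emph{some} dyadic scale $2^{-2j}R$, showing $\mu(P)<\mu(B(x_0,R))$, and working with a point $y\notin P$, the paper guarantees that consecutive radii in the constructed sequence satisfy $r_{k+1}\in[r_k/4,r_k/2]$, and it is only this bounded ratio that lets one transfer the balanced-ball property of the $B(x_k,r_k)$ to \emph{all} small balls centered at the limit $x$. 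Without this ingredient your argument proves balanced density only along the special sequence of radii $5r_k$, which is strictly weaker than the statement.
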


\begin{proof}
The proof is by suitable iteration, where we consider two options.

	\textbf{Case 1.}
Suppose that
\begin{equation}\label{eq:E smaller than half everywhere}
\frac{\mu(B(x,2^{-2}R)\cap E)}{\mu(B(x,2^{-2}R))}<\frac{1}{2}
\end{equation}
for all $x\in B(x_0,R)$; the case ``$>$'' is considered analogously.
Define a  ``bad'' set
\[
P:=\left\{x\in B(x_0,R):\,\frac{\mu(B(x,2^{-2j}R)\cap E)}{\mu(B(x,2^{-2j}R))}
\le \frac{1}{4\widehat{C}_d^6}\ \ \textrm{for some }j\in\N\right\}.
\]
For every $x\in P$ there is a radius $r_x\le R/20\le R$ such that
\[
\frac{\mu(B(x,5r_x)\cap E)}{\mu(B(x,5r_x))}\le \frac{1}{4\widehat{C}_d^6}.
\]
Thus $\{B(x,r_x)\}_{x\in P}$ is a covering of $P$.
By the $5$-covering theorem, pick a countable collection of
pairwise disjoint balls $\{B(x_j,r_j)\}_{j=1}^{\infty}$ such that
$P\subset \bigcup_{j=1}^{\infty}B(x_j,5r_j)$.
Now
\begin{align*}
\mu(P\cap E)\le \sum_{j=1}^{\infty}\mu(B(x_j,5r_j)\cap E)
&\le \frac{1}{4 \widehat{C}_d^6}\sum_{j=1}^{\infty}\mu(B(x_j,5r_j))\\
&\le \frac{1}{4 \widehat{C}_d^3}\sum_{j=1}^{\infty}\mu(B(x_j,r_j))\\
&\le \frac{1}{4 \widehat{C}_d^3}\mu(B(x_0,2R))\\
&\le \frac{1}{4 \widehat{C}_d^2}\mu(B(x_0,R)).
\end{align*}
Thus
\begin{align*}
\mu(P)
&=\mu(P\cap E)+\mu(P\setminus E)\\
&\le \frac{1}{4\widehat{C}_d^2}\mu(B(x_0,R))+\mu(B(x_0,R)\setminus E)\\
&\le \frac{1}{4\widehat{C}_d^2}\mu(B(x_0,R))+\Bigg(1-\frac{1}{2\widehat{C}_d^2}\Bigg)\mu(B(x_0,R))\quad\textrm{by }\eqref{eq:half measure assumption}\\
&\le \Bigg(1-\frac{1}{4\widehat{C}_d^2}\Bigg)\mu(B(x_0,R)).
\end{align*}
In particular,
there is a point $y\in B(x_0,R)\setminus P$.
Now there are two options.

	\textbf{Case 1(a).}
The first option is that for each $j\in\N$, we have
\[
\frac{\mu(B(y,2^{-2j}R)\cap E)}{\mu(B(y,2^{-2j}R))}<\frac{1}{2}
\]
and then in fact
\[
\frac{1}{4\widehat{C}_d^6}\le \frac{\mu(B(y,2^{-2j}R)\cap E)}{\mu(B(y,2^{-2j}R))}<\frac{1}{2},
\]
for all $j\in\N$,
since $y\in B(x_0,R)\setminus P$.
From this we easily find that \eqref{eq:desired density point} holds  (with $x=y$).

	\textbf{Case 1(b).}
The second option is that there is a smallest index $l\ge 2$ such that
\[
\frac{\mu(B(y,2^{-2l}R)\cap E)}{\mu(B(y,2^{-2l}R))}\ge\frac{1}{2}.
\]
Then
\[
\frac{1}{2\widehat{C}_d^2}\le \frac{\mu(B(y,2^{-2l+2}R)\cap E)}{\mu(B(y,2^{-2l+2}R))}
< \frac{1}{2},
\]
and also
\[
\frac{1}{4\widehat{C}_d^6}\le\frac{\mu(B(y,2^{-2j}R)\cap E)}{\mu(B(y,2^{-2j}R))}
<\frac{1}{2}\quad\textrm{for all }j=1,\ldots,l-2.
\]
Note that regardless of the direction of the inequality in
\eqref{eq:E smaller than half everywhere}, we get
\[
\frac{1}{2\widehat{C}_d^2}\le \frac{\mu(B(y,2^{-2l+2}R)\cap E)}{\mu(B(y,2^{-2l+2}R))}
< 1-\frac{1}{2\widehat{C}_d^2}
\]
and
\begin{equation}\label{eq:doubling constant six estimate}
\frac{1}{4\widehat{C}_d^6}\le\frac{\mu(B(y,2^{-2j}R)\cap E)}{\mu(B(y,2^{-2j}R))}
\le1-\frac{1}{4\widehat{C}_d^6}\quad\textrm{for all }j=1,\ldots,l-2.
\end{equation}

	\textbf{Case 2.}
Alternatively, suppose that we find two points $x,y\in B(x_0,R)$ such that
\[
\frac{\mu(B(x,2^{-2}R)\cap E)}{\mu(B(x,2^{-2}R))}\ge \frac{1}{2}
\]
and
\[
\frac{\mu(B(y,2^{-2}R)\cap E)}{\mu(B(y,2^{-2}R))}\le \frac{1}{2}.
\]
Then, using the fact that $\widehat{d}_M=\widehat{d}$,
we find a continuum $F$ that contains
$x$ and $y$ and is contained in $B(x_0,3 R)$.
Since $F$ is connected, for every $\eps>0$ there is an
$\eps$-chain in $F$ from $x$ to $y$. In particular, we
find an $R/4$-chain in $F$ from $x$ to $y$.
Let $z$ be the last point in the chain for which we have
\[
\frac{\mu(B(z,2^{-2}R)\cap E)}{\mu(B(z,2^{-2}R))}\ge \frac{1}{2}.
\]
If $z=y$, then we have
\[
\frac{\mu(B(z,2^{-2}R)\cap E)}{\mu(B(z,2^{-2}R))}= \frac{1}{2}.
\]
Else there exists $w\in F$ with $\widehat{d}(z,w)<R/4$ and
\[
\frac{\mu(B(w,2^{-2}R)\cap E)}{\mu(B(w,2^{-2}R))}< \frac{1}{2}\quad\textrm{and thus}
\quad \frac{\mu(B(w,2^{-2}R)\setminus E)}{\mu(B(z,2^{-1}R))}\ge 
\frac{1}{2\widehat{C}_d^2}.
\]
Now
\begin{align*}
\frac{\mu(B(z,2^{-1}R)\cap E)}{\mu(B(z,2^{-1}R))}
&= \frac{\mu(B(z,2^{-1}R))-\mu(B(z,2^{-1}R)\setminus E)}{\mu(B(z,2^{-1}R))}\\
&\le \frac{\mu(B(z,2^{-1}R))-\mu(B(w,2^{-2}R)\setminus E)}{\mu(B(z,2^{-1}R))}\\
&\le 1-\frac{1}{2\widehat{C}_d^2}.
\end{align*}
Conversely,
\[
\frac{\mu(B(z,2^{-1}R)\cap E)}{\mu(B(z,2^{-1}R))}
\ge\frac{\mu(B(z,2^{-2}R)\cap E)}{\widehat{C}_d\mu(B(z,2^{-2}R))}
\ge \frac{1}{2\widehat{C}_d}.
\]
In conclusion, there is $z\in B(x_0,3R)$ with
\[
\frac{1}{2\widehat{C}_d^2}\le \frac{\mu(B(z,2^{-1}R)\cap E)}{\mu(B(z,2^{-1}R))}\le 1-\frac{1}{2\widehat{C}_d^2};
\]
note that this holds also in the case $z=y$.

To summarize, in Case 1(a) we obtain infinitely many balls (and then we are done),
in Case 1(b) we obtain the $l-1$ new balls
$B(y,2^{-2}R),\ldots,B(y,2^{-2l+2}R)$, where $B(y,2^{-2l+2}R)$ satisfies
\eqref{eq:half measure assumption}, and in Case (2) we obtain one new
ball satisfying \eqref{eq:half measure assumption}.

By iterating the procedure
and concatenating the new balls obtained in each step to the previous
list of balls, we find a sequence of balls with center points
$x_k\in B(x_{k-1},3 r_{k-1})$ and radii
$r_k$ such that $r_0=R$, $r_k\in [r_{k-1}/4,r_{k-1}/2]$, and
(recall \eqref{eq:doubling constant six estimate})
\[
\frac{1}{4\widehat{C}_d^6}\le \frac{\mu(B(x_k,r_k)\cap E)}{\mu(B(x_k,r_k))}
\le 1-\frac{1}{4\widehat{C}_d^6}
\]
for all $k\in\N$.
(Note that several consecutive balls in this sequence will have the same center
points if they are obtained from Case 1.)
By completeness of the space
we find $x\in Z$ such that
$x_k\to x$. For each $l=0,1,\ldots$ we have
\[
d(x,x_l)\le  \sum_{k=l}^{\infty}d(x_k,x_{k+1})
\le 3\sum_{k=l}^{\infty}r_k\le 6 r_l.
\]
In particular,
$d(x,x_0)\le 6 R$.
Now $B(x_l,r_l)\subset B(x,7 r_l)\subset B(x_l,13  r_l)$
for all $l\in\N$, and so
\[
\frac{\mu(B(x,7  r_l)\cap E)}{\mu(B(x,7  r_l))}
\ge \frac{\mu(B(x_l,r_l)\cap E)}{\mu(B(x_l,13 r_l))}
\ge \frac{1}{\widehat{C}_d^{4}}\frac{\mu(B(x_l,r_l)\cap E)}{\mu(B(x_l,r_l))}
\ge \frac{1}{4 \widehat{C}_d^{10}}
\]
and similarly
\[
\frac{\mu(B(x,7  r_l)\setminus E)}{\mu(B(x,7  r_l))}
\ge \frac{\mu(B(x_l,r_l)\setminus E)}{\mu(B(x_l,13  r_l))}
\ge \frac{1}{\widehat{C}_d^{4}}
\frac{\mu(B(x_l,r_l)\setminus E)}{\mu(B(x_l,r_l))}
\ge \frac{1}{4 \widehat{C}_d^{10}}.
\]
It follows that
\[
\liminf_{r\to 0}\frac{\mu(B(x,r)\cap E)}{\mu(B(x,r))}
\ge \frac{1}{4 \widehat{C}_d^{12}}
\]
and
\[
\liminf_{r\to 0}\frac{\mu(B(x,r)\setminus E)}{\mu(B(x,r))}
\ge \frac{1}{4 \widehat{C}_d^{12}},
\]
proving \eqref{eq:desired density point}.
\end{proof}

\begin{corollary}\label{cor:density points}
Let $x_0\in Z$, $R>0$, and let $E\subset Z$ be a $\mu$-measurable set
such that
\[
0< \mu(B(x_0,R)\cap E)<\mu(B(x_0,R)).
\]
Then there exists a point $x\in B(x_0,9 R)$ such that
\begin{equation}\label{eq:strong boundary point}
\frac{1}{4 \widehat{C}_d^{12}}\le \liminf_{r\to 0}\frac{\mu(B(x,r)\cap E)}{\mu(B(x,r))}
\le \limsup_{r\to 0}\frac{\mu(B(x,r)\cap E)}{\mu(B(x,r))}
\le 1-\frac{1}{4 \widehat{C}_d^{12}}.
\end{equation}
\end{corollary}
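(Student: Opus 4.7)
My strategy is to reduce the problem to Proposition~\ref{prop:strong boundary point} by producing a ball $B(z, 2r)$ in $Z$ whose $E$-density lies in the range $[1/(2\widehat{C}_d^2),\,1 - 1/(2\widehat{C}_d^2)]$ required by the hypothesis \eqref{eq:half measure assumption}, with center $z$ near $x_0$ and radius $r$ considerably smaller than $R$. Since $0 < \mu(B(x_0, R) \cap E) < \mu(B(x_0, R))$, Lebesgue's differentiation theorem furnishes a point $y_1 \in B(x_0, R) \cap E$ at which $E$ has density $1$ and a point $y_2 \in B(x_0, R) \setminus E$ at which $E$ has density $0$. Exploiting the assumption $\widehat{d}_M = \widehat{d}$, I fix $\eps \in (0, R)$ and select a continuum $F \subset Z$ containing $y_1, y_2$ with $\diam F < \widehat{d}(y_1, y_2) + \eps \le 2R + \eps$; since $F \ni y_1 \in B(x_0, R)$, this forces $F \subset B(x_0, 3R + \eps)$.

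Next I choose $r \in (0, R/3]$ small enough that
\[
\frac{\mu(B(y_1, r) \cap E)}{\mu(B(y_1, r))} \ge \frac{1}{2} \qquad \text{and} \qquad \frac{\mu(B(y_2, r) \cap E)}{\mu(B(y_2, r))} \le \frac{1}{2},
\]
which is possible since $y_1, y_2$ are density points of $E$ and its complement respectively. The connectedness of $F$ then yields an $r$-chain $y_1 = z_0, z_1, \ldots, z_m = y_2$ with $z_j \in F$ for every $j$. Let $z = z_k$ be the last point of this chain at which the $E$-density of $B(z, r)$ is at least $1/2$. Mimicking Case 2 of the proof of Proposition~\ref{prop:strong boundary point}, either $z = y_2$, in which case this density equals $1/2$ exactly, or there is a next point $w = z_{k+1} \in F$ with $\widehat{d}(z, w) < r$ and $E$-density strictly less than $1/2$ in $B(w, r)$. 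In either case the inclusions $B(z, r) \subset B(z, 2r)$ and $B(w, r) \subset B(z, 2r) \subset B(w, 4r)$ combined with the doubling property yield
\[
\frac{1}{2\widehat{C}_d^2} \le \frac{\mu(B(z, 2r) \cap E)}{\mu(B(z, 2r))} \le 1 - \frac{1}{2\widehat{C}_d^2},
\]
which is precisely the hypothesis \eqref{eq:half measure assumption} applied to $B(z, 2r)$.

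Applying Proposition~\ref{prop:strong boundary point} to the ball $B(z, 2r)$ then produces a point $x \in B(z, 12r)$ satisfying \eqref{eq:desired density point} with constant $1/(4\widehat{C}_d^{12})$, which is exactly the desired estimate \eqref{eq:strong boundary point}. Since $z \in F \subset B(x_0, 3R + \eps)$, $12r \le 4R$, and $\eps < R$, I obtain $\widehat{d}(x, x_0) < 3R + \eps + 12r < 8R < 9R$, so $x \in B(x_0, 9R)$. I do not foresee any serious obstacle: the only step requiring care is the density estimate for $B(z, 2r)$ in the second paragraph, but this is a direct transcription of the doubling computation already carried out in Case 2 of Proposition~\ref{prop:strong boundary point}, with the radius $R/4$ replaced by the smaller radius $r$.
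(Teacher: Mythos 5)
Your proof is correct and arrives at a slightly better radius bound ($x \in B(x_0, 8R)$ rather than $9R$). The strategy is the same in spirit as the paper's — reduce to Proposition~\ref{prop:strong boundary point} by producing a ball satisfying the balanced-density hypothesis \eqref{eq:half measure assumption} — but the route is genuinely different. The paper splits into two cases according to whether one can find points $y, z \in B(x_0, R)$ with $E$-densities on opposite sides of $1/2$ at the fixed radius $R/2$: if so it invokes the continuum/chain argument from Case 2 of Proposition~\ref{prop:strong boundary point}; if not, it finds a Lebesgue density point of $E$ and locates a balanced-density radius by a radial monotonicity/doubling argument, with no continuum needed. You instead always take two Lebesgue points, one of density $1$ and one of density $0$, join them by a nearly-minimizing continuum, shrink the common radius $r$ until the density conditions hold at both endpoints, and then run the chain argument; this eliminates the case split at the cost of always relying on the $\widehat{d}_M = \widehat{d}$ assumption. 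One small presentational point: your phrase ``in either case the inclusions \dots'' should be stated with a touch more care, since when $z = y_2$ there is no $w$; in that subcase the bound follows directly from the exact density $1/2$ at radius $r$, the inclusion $B(z,r) \subset B(z,2r)$, and one application of doubling. This is easy to fix and does not affect the validity of the argument.
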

\begin{proof}
Again consider two cases. The first is that
we find two points $y,z\in B(x_0,R)$ such that
\[
\frac{\mu(B(y,2^{-1}R)\cap E)}{\mu(B(y,2^{-1}R))}\ge \frac{1}{2}
\quad\textrm{and}\quad
\frac{\mu(B(z,2^{-1}R)\cap E)}{\mu(B(z,2^{-1}R))}\le \frac{1}{2}.
\]
Then just as in the proof of Proposition \ref{prop:strong boundary point}
Case 2, we find $w\in B(x_0,3R)$ with
\[
\frac{1}{2\widehat{C}_d^2}\le \frac{\mu(B(w,R)\cap E)}{\mu(B(w,R))}
\le 1-\frac{1}{2\widehat{C}_d^2}.
\]
Now Proposition \ref{prop:strong boundary point} gives a point $x\in B(w,6R)\subset B(x_0,9R)$ such that \eqref{eq:strong boundary point} holds.

The second possible case is that for all $y\in B(x_0,R)$ we have
\[
\frac{\mu(B(y,2^{-1}R)\cap E)}{\mu(B(y,2^{-1}R))}< \frac{1}{2}
\]
(the case ``$>$'' being analogous).
By Lebesgue's differentiation theorem, we find a point
$y\in I_E\cap B(x_0,R)$ (recall \eqref{eq:measure theoretic interior}) and then
it is easy to find a radius $0<r\le R/2$ such that
\[
\frac{1}{2\widehat{C}_d}\le \frac{\mu(B(y,r)\cap E)}{\mu(B(y,r))}
<\frac{1}{2}.
\]
Now Proposition \ref{prop:strong boundary point} again gives a point
$x\in B(y,6r)\subset B(x_0,4R)$ such that
\eqref{eq:strong boundary point} holds.
\end{proof}

\section{Components of sets of finite perimeter}\label{sec:components}

In Sections \ref{sec:components} to \ref{sec:proof of the main result}
we assume that $(X,d,\mu)$ is a complete, geodesic metric space that
is equipped with the doubling measure $\mu$ and supports a
$(1,1)$-Poincar\'e inequality.

In this section we consider connected components,
or components for short, of sets of finite perimeter.
The following is the main result of the section.

\begin{proposition}\label{prop:connected components}
Let $B(x,R)$ be a ball with $0<R<\frac{1}{4}\diam X$ and let
$F\subset X$ be a closed set with $P(F,X)<\infty$.
Denote the components of $F\cap \overline{B}(x,R)$ having nonzero
$\mu$-measure by $F_1,F_2,\ldots$. Then $\mu\left(\overline{B}(x,R)\cap F\setminus \bigcup_{j=1}^{\infty}F_j\right)=0$,
$P(F_j,B(x,R))<\infty$ for all $j\in\N$,
and for any sets $A_j\subset F_j$ with $P(A_j,B(x,R))<\infty$ for
all $j\in\N$ we have
\[
P\Bigg(\bigcup_{j=1}^{\infty}A_j,B(x,R)\Bigg)=\sum_{j=1}^{\infty}P(A_j,B(x,R)).
\]
\end{proposition}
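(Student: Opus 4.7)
My strategy is to approximate the topological components of $F' := F \cap \overline{B}(x,R)$ by $\delta$-chain components and pass to the limit $\delta \to 0$. For each $\delta > 0$, partition $F'$ into equivalence classes $\{C_\delta^k\}_k$ under the relation ``joined by a $\delta$-chain in $F'$''; these are clopen in $F'$, pairwise separated by distance $\geq \delta$, and countable by separability. Letting $U_\delta^k$ be the open $(\delta/3)$-neighborhood of $C_\delta^k$, the $U_\delta^k$ are pairwise disjoint open sets, every $y \in X \setminus U_\delta^k$ has $\dist(y, C_\delta^k) \geq \delta/3$ (so $\partial^* C_\delta^k \subset U_\delta^k$), and $\ch_{C_\delta^k}$ agrees with $\ch_F$ on $B(x,R) \cap U_\delta^k$. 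Hence
\[
P(C_\delta^k, B(x,R)) = P(C_\delta^k, U_\delta^k \cap B(x,R)) = P(F, U_\delta^k \cap B(x,R)),
\]
and summing over $k$, using that $P(F, \cdot)$ is a measure and the $U_\delta^k$'s are pairwise disjoint,
\[
\sum_k P(C_\delta^k, B(x,R)) = P(F, B(x,R)) < \infty.
\]

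Each positive-measure topological component satisfies $F_j = \bigcap_n C_{1/n}^{k_n(j)}$ by compactness of $F'$, so $\ch_{C_{1/n}^{k_n(j)}} \searrow \ch_{F_j}$ in $L^1(B(x,R))$ and lower semicontinuity yields $P(F_j, B(x,R)) \leq P(F, B(x,R)) < \infty$. For additivity, note $\partial^* A_j \subset F_j$ and the $F_j$'s are pairwise disjoint compact sets; for any finite $N$, choose $\delta$ small enough to separate $F_1, \ldots, F_N$ into distinct $\delta$-components. Then $A_1, \ldots, A_N$ are at pairwise distance $\geq \delta$, and the first-paragraph argument applied to $\sum_{j \leq N} \ch_{A_j}$ gives $P\bigl(\bigcup_{j \leq N} A_j, B(x,R)\bigr) = \sum_{j \leq N} P(A_j, B(x,R))$. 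Combined with countable subadditivity \eqref{eq:perimeter of countable union} and the monotone limit $N \to \infty$ (using that the measures $\|D\ch_{A_j}\|$, concentrated on the compact pairwise disjoint $F_j$, are mutually singular), this upgrades to the full countable additivity.

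The most delicate claim is $\mu\bigl(F' \setminus \bigcup_j F_j\bigr) = 0$. Write $A$ for the exceptional set; every topological component of a point in $A$ has measure zero. I argue by contradiction: if $\mu(A) > 0$, Lebesgue's theorem together with $\mathcal H(\partial^* F) < \infty$ (hence $\mu(\partial^* F) = 0$) gives a density-one point $y$ of both $A$ and $F$ in the interior of $\overline{B}(x,R)$. For $r < 1/(2n)$ with $B(y,r) \subset B(x,R)$, any two points of $B(y,r) \cap F$ are at distance $< 1/n$, so $B(y,r) \cap F \subset C_{1/n}^{k_n(y)}$. I would then apply Corollary \ref{cor:density points} inside a suitable subset of $C_{1/n}^{k_n(y)}$ to produce strong-boundary points that, combined with $P(F, X) < \infty$ and \eqref{eq:def of theta}, are incompatible with the density-one structure at $y$. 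This is the main obstacle: density one in $F$ alone does not force the topological component of $y$ to have positive measure (fat Cantor sets exhibit this without finite perimeter), so the argument must marry the $\delta$-component structure with the strong-boundary construction of Section \ref{sec:strong boundary points} to rule out pathological ``shattering'' configurations.
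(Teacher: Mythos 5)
Your first two paragraphs give a genuinely different and arguably cleaner route to the claims $P(F_j,B(x,R))<\infty$ and (after a small extra step of summing over $j\le N$ for fixed $N$ and then passing $N\to\infty$, rather than bounding each component individually) $\sum_j P(F_j,B(x,R))\le P(F,B(x,R))$. The paper proves these via $1$-quasiopen sets, $1$-path openness, and Lemma \ref{lem:perimeter controlled by boundary}, obtaining $P(F_j,B(x,R))\le C_d\,\mathcal H(\partial^*F\cap F_j)$; your $\delta$-chain component decomposition (finite for each $\delta$ by compactness, with the locality identity $P(C_\delta^k,B(x,R))=P(F,U_\delta^k\cap B(x,R))$ summing exactly to $P(F,B(x,R))$ because $\partial^*F\cap B(x,R)\subset\bigcup_k U_\delta^k$) bypasses that machinery. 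This is a nice idea. However, the remaining two claims have real gaps.

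For the countable additivity $P\big(\bigcup_j A_j,B(x,R)\big)=\sum_j P(A_j,B(x,R))$, your finite-$N$ step is correct, but passing to $N\to\infty$ does not follow from lower semicontinuity or mutual singularity. Lower semicontinuity along $\ch_{\bigcup_{j\le N}A_j}\nearrow\ch_{\bigcup_j A_j}$ gives only the inequality $P\big(\bigcup_j A_j,\cdot\big)\le\sum_j P(A_j,\cdot)$ — i.e., the same direction as subadditivity \eqref{eq:perimeter of countable union}, not the reverse. And mutual singularity of the measures $\|D\ch_{A_j}\|$ does not by itself imply $\|D\ch_{\bigcup_j A_j}\|\ge\sum_j\|D\ch_{A_j}\|$: the compact sets $F_j$ can cluster (for example, infinitely many disjoint components accumulating on one of them), and near such cluster points the perimeter measure of the union may not split as the sum. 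What is needed is the argument of Proposition \ref{prop:sum of perimeters of components}: remove the measure-theoretic closure of the tail $\bigcup_{j>M}A_j$, use Lemma \ref{lem:coincidence of perimeter} to reduce to a finite union there, show via Lemma \ref{lem:inner capacity} that $\capa_1\big(\overline{\bigcup_{j>M}A_j}^m\cap B(x,r)\big)\to 0$ as $M\to\infty$ (here the finiteness of $\sum_j P(F_j,B(x,R))$ is used), and then invoke absolute continuity of the variation measure with respect to $\capa_1$ (Lemma \ref{lem:variation measure and capacity}).

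For $\mu\big(F'\setminus\bigcup_j F_j\big)=0$ you correctly identify this as the hard part but do not prove it, and the route you sketch via Corollary \ref{cor:density points} and Section \ref{sec:strong boundary points} is not the one the paper takes. (It is not circular — since $X$ has been reduced to a geodesic space, $d_M=d$ holds and Section \ref{sec:strong boundary points} applies to $X$ itself — but it is not clear how to extract the needed contradiction from the existence of a strong boundary point near $y$.) The paper's actual tool is Lemma \ref{lem:finding a positive measure component}: if an open set $V$ has small $1$-capacity in a ball, there is a positive-measure connected subset of the complement of $V$ inside the half-ball, proved via the Hardy--Littlewood maximal function and an Arzel\`a--Ascoli compactness argument for nearly $g$-minimizing curves. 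Applied near a density-one point of the putative bad set $H$ (where, after choosing $r$ so that $P(H,B(y,r))$ is small, one bounds $\capa_1(B(y,r/2)\setminus F)$ via a Leibniz rule and \eqref{eq:Newtonian and BV capacities are comparable}), this produces a positive-measure connected subset of $F$ that must be essentially contained in $H$, contradicting that $H$ contains no positive-measure component. This quantitative connectivity lemma is the missing ingredient that your fat-Cantor-set worry correctly suggests is needed.
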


Of course, there may be only finitely many $F_j$'s, and so we will always understand
that some $F_j$'s can be empty. In fact, supposing that $\mu(F\cap B(x,R))>0$,
we will know only after Lemma \ref{lem:H has measure zero} that
any $F_j$'s are nonempty.

Next we gather a number of preliminary results.
Recall the definition of $1$-quasiopen sets from page
\pageref{quasiopen}.

\begin{proposition}[{\cite[Proposition 4.2]{L-Fed}}]\label{prop:set of finite perimeter is quasiopen}
	Let $\Omega\subset X$ be open and let $F\subset X$ be $\mu$-measurable with
	$P(F,\Omega)<\infty$. Then the sets $I_F\cap\Omega$ and $O_F\cap\Omega$ are $1$-quasiopen.
\end{proposition}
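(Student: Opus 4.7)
The plan is to prove $1$-quasiopenness of $I_F \cap \Om$; the statement for $O_F\cap\Om$ then follows by replacing $F$ with $X\setminus F$, which exchanges $I_F$ and $O_F$ while preserving perimeter. Since the intersection of a $1$-quasiopen subset of $X$ with an open set is again $1$-quasiopen (by distributivity of set union over intersection), it suffices to produce, for each $\eps>0$, an open set $G\subset X$ with $\capa_1(G)<\eps$ such that $I_F\cup G$ is open; then $(I_F\cap\Om)\cup G=(I_F\cup G)\cap(\Om\cup G)$ is open as well.

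The approach is to leverage the $\BV$ structure. The characteristic function $u=\ch_F$ lies in $\BV_{\loc}(\Om)$ since $P(F,\Om)<\infty$, and comparing \eqref{eq:lower approximate limit} with \eqref{eq:measure theoretic interior} shows that the lower approximate limit satisfies $u^{\wedge}=\ch_{I_F}$ on $\Om$. I would first localize by fixing a relatively compact exhaustion $\Om_1\Subset\Om_2\Subset\cdots\Subset\Om$ and, for each $j$, multiplying $\ch_F$ by a Lipschitz cutoff supported in $\Om_{j+1}$ and equal to $1$ on $\Om_j$; using the product rule for $\BV$ this yields $v_j\in\BV(X)$ agreeing with $\ch_F$ on $\Om_j$, hence $v_j^{\wedge}=\ch_{I_F}$ on $\Om_j$.

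Next, I would apply (or establish) the $1$-quasi-lower-semicontinuity of $v_j^{\wedge}$: there exists an open $G_j\subset X$ with $\capa_1(G_j)<\eps/2^j$ such that $v_j^{\wedge}|_{X\setminus G_j}$ is lower semicontinuous. The standard proof proceeds by taking an $L^1_{\loc}$-approximating sequence $w_i\in N^{1,1}_{\loc}$ of $v_j$ with $\int g_{w_i}\,d\mu\to\|Dv_j\|(X)$ as in the definition of the total variation, exploiting the $1$-quasicontinuity of Newton--Sobolev functions together with a capacitary Borel--Cantelli argument that bounds the capacity of the ``bad'' set where pointwise convergence fails; the bridge between $\BV$- and $N^{1,1}$-capacities from \eqref{eq:Newtonian and BV capacities are comparable} and \eqref{eq:variational one and BV capacity} underlies this transfer. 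Setting $G:=\bigcup_j G_j$ gives $\capa_1(G)<\eps$ by countable subadditivity, and on $\Om_j\setminus G$ the function $\ch_{I_F}$ is lower semicontinuous, so $\{\ch_{I_F}>1/2\}\cap(\Om_j\setminus G)=(I_F\cap\Om_j)\setminus G$ has the form $V_j\cap(\Om_j\setminus G)$ with $V_j\subset X$ open. A short set-theoretic manipulation then yields $(I_F\cap\Om_j)\cup G=V_j\cup G\cup(\text{open})$, and taking the union over $j$ produces the desired open representation of $I_F\cup G$.

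The main obstacle is the $1$-quasi-lower-semicontinuity step: one must carefully align the pointwise representatives of the approximants $w_i$ with the pointwise-defined $v_j^{\wedge}$ and control the exceptional sets in $\capa_1$ (not merely in $\mu$), which requires weak-type capacity estimates of the flavor of the relative isoperimetric inequality \eqref{eq:relative isoperimetric inequality} applied to the super-level sets of $w_i$. Once this capacity accounting is secured and summed against the geometric budget $\eps/2^j$, the remaining arguments -- localization via cutoffs and opening of lower-semicontinuous level sets -- are purely formal.
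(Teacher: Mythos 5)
This proposition is only cited in the paper (from \cite{L-Fed}), and your route coincides with the one used there: realize $\ch_{I_F}$ as the lower approximate limit $(\ch_F)^{\wedge}$, invoke the $1$-quasi lower semicontinuity of $u^{\wedge}$ for $u\in\BV(X)$, and open up the superlevel set $\{(\ch_F)^{\wedge}>1/2\}$ outside a set of small capacity; the localization by cutoffs and exhaustion is also how one handles $P(F,\Om)<\infty$ versus $P(F,X)<\infty$. Two caveats. First, your opening reduction overreaches: you cannot hope to show that $I_F$ itself is $1$-quasiopen in all of $X$, since nothing is assumed about $F$ outside $\Om$; fortunately your actual construction never uses this and directly produces $(I_F\cap\Om_j)\cup G=(W_j\cap\Om_j)\cup G$ open, whose union over $j$ is $(I_F\cap\Om)\cup G$, which is exactly what is needed. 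Second, the quasi-semicontinuity of $v_j^{\wedge}$ is the entire mathematical content here, and your sketch of its proof is too optimistic: an $L^1_{\loc}$-approximating sequence from the definition of the total variation gives no pointwise or capacitary control by itself, and the known proof (Lahti--Shanmugalingam) instead runs through discrete convolutions and a weak-type estimate of the form $\capa_1(\{u^{\vee}>\lambda\})\le C\lambda^{-1}\Vert u\Vert_{\BV(X)}$ obtained from a boxing-type inequality and the coarea formula, not through a Borel--Cantelli argument on generic approximants. If you treat that theorem as a citable black box, your argument is complete; if you intend to prove it, the sketch as written would not go through.
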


\begin{proposition}\label{prop:ae curve goes through boundary}
Let
$F\subset X$ with $P(F,X)<\infty$.
Then for $1$-a.e. curve $\gamma$, $\gamma^{-1}(I_F)$
and $\gamma^{-1}(O_F)$ are relatively open subsets of $[0,\ell_{\gamma}]$.
\end{proposition}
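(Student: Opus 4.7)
The plan is to combine the previous proposition, which gives that $I_F$ and $O_F$ are $1$-quasiopen, with a general principle: if $U\subset X$ is $1$-quasiopen, then for $1$-a.e.\ curve $\gamma$ the preimage $\gamma^{-1}(U)$ is relatively open in $[0,\ell_{\gamma}]$. Since the exceptional curve families for the choices $U=I_F$ and $U=O_F$ are each $1$-null, their union remains $1$-null and so both conclusions hold simultaneously for $1$-a.e.\ curve.

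To establish this general principle, for each $n\in\N$ I would use $1$-quasiopenness to pick an open set $G_n\subset X$ with $\capa_1(G_n)<4^{-n}$ and $U\cup G_n$ open, and then select an associated potential $u_n\in N^{1,1}(X)$ with $0\le u_n\le 1$, $u_n\ge 1$ on $G_n$, and $\|u_n\|_{N^{1,1}(X)}<4^{-n}$. I would then form the weighted sums
\[
v:=\sum_{n=1}^{\infty}2^n u_n,\qquad g:=\sum_{n=1}^{\infty}2^n g_{u_n},
\]
both a priori valued in $[0,\infty]$. Because $\sum 2^n\|u_n\|_{N^{1,1}(X)}<\infty$, we have $g\in L^1(X)$, and summing the upper gradient inequalities for the individual $u_n$ shows that $g$ is a $1$-weak upper gradient of $v$. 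By countable subadditivity of $\Mod_1$ applied to the exceptional curve families for each $u_n$, for $1$-a.e.\ curve $\gamma$ we have $\int_{\gamma}g\,ds<\infty$, $v$ is finite along $\gamma$, and $v\circ\gamma$ is absolutely continuous on $[0,\ell_{\gamma}]$.

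Fix such a curve $\gamma$ and a point $s_0\in\gamma^{-1}(U)$. Since $v\ge 2^n u_n\ge 2^n$ on $G_n$, choosing $n$ so large that $2^n>v(\gamma(s_0))+1$ and invoking the continuity of $v\circ\gamma$ produces $\delta_1>0$ such that $v(\gamma(s))<2^n$, and hence $\gamma(s)\notin G_n$, for every $s\in(s_0-\delta_1,s_0+\delta_1)\cap[0,\ell_{\gamma}]$. On the other hand, $U\cup G_n$ is open and contains $\gamma(s_0)$, so continuity of $\gamma$ yields $\delta_2>0$ with $\gamma((s_0-\delta_2,s_0+\delta_2)\cap[0,\ell_{\gamma}])\subset U\cup G_n$. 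Intersecting these two neighborhoods of $s_0$ gives a relative neighborhood of $s_0$ mapping entirely into $U$, proving that $\gamma^{-1}(U)$ is relatively open.

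The main obstacle is the justification that the formal sum $v$ genuinely defines a Newton--Sobolev object whose composition with $1$-a.e.\ curve is continuous and finite. This passage from the individual potentials $u_n$ to a single integrable upper gradient of $v$ requires care because $v$ may equal $+\infty$ on a set of $1$-capacity zero; the essential tools are countable subadditivity of $\Mod_1$ and the stability of the upper gradient inequality under the monotone passage to the sum, both of which are standard in the Newton--Sobolev theory as developed in \cite{BB}.
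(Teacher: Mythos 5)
Your proof is correct and takes a genuinely different route at the key step. Both you and the paper begin by invoking Proposition~\ref{prop:set of finite perimeter is quasiopen} to conclude that $I_F$ and $O_F$ are $1$-quasiopen; the difference is how one passes from $1$-quasiopenness to the path-openness conclusion. The paper simply cites \cite[Remark 3.5]{S2}, where it is shown that $1$-quasiopen sets are $1$-path open. You instead supply a self-contained proof of this implication via the standard telescoping potential: the scaled capacity potentials $u_n$ for the open correction sets $G_n$ sum to an $N^{1,1}(X)$-function $v=\sum_n 2^n u_n$ with integrable $1$-weak upper gradient $g=\sum_n 2^n g_{u_n}$, finite and absolutely continuous along $1$-a.e.\ curve; then $v\ge 2^n$ on $G_n$ lets you push the curve off $G_n$ near any $s_0\in\gamma^{-1}(U)$, while openness of $U\cup G_n$ localizes the curve into $U\cup G_n$, and the intersection lies in $U$. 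The points you flag as needing care are exactly the right ones, and they are covered by the standard facts that $N^{1,1}(X)$ is complete, that $1$-a.e.\ curve avoids sets of zero $1$-capacity, and that a countable sum of $1$-weak upper gradients is a $1$-weak upper gradient of the corresponding sum, all available in \cite{BB}. What your version buys is independence from the external reference \cite{S2}, at the cost of reproducing a known argument the paper takes as a black box.
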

\begin{proof}
	By Proposition \ref{prop:set of finite perimeter is quasiopen},
	the sets $I_F$ and $O_F$ are $1$-quasiopen. Then
by \cite[Remark 3.5]{S2}, they
are also \emph{$1$-path open},
meaning that for $1$-a.e. curve $\gamma$ in $X$,
the sets $\gamma^{-1}(I_F)$ and $\gamma^{-1}(O_F)$
are relatively open subsets of $[0,\ell_{\gamma}]$.
\end{proof}

For any set $A\subset X$, we define the \emph{measure-theoretic closure} as
\begin{equation}\label{eq:measure theoretic closure}
\overline{A}^m:=I_A\cup \partial^*A.
\end{equation}

\begin{lemma}\label{lem:inner capacity}
Let $B(x,R)$ be a ball with $0<R<\frac{1}{4}\diam X$ and let $E_1\supset E_2\supset \ldots$ such that
$P(E_j,B(x,R))<\infty$ for all $j\in\N$,
and $\mu(E_j)\to 0$ and $P(E_j,B(x,R))\to 0$ as $j\to\infty$.
Let $0<r<R$. Then
\[
\capa_1(\overline{E_j}^m\cap B(x,r))\to 0.
\]
\end{lemma}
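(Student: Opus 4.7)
The plan is to construct a test function $v_j=\eta\,\ch_{E_j}$ admissible for the variational $\BV$-capacity $\rcapa^{\vee}_{\BV}(\overline{E_j}^m\cap B(x,r),B(x,R))$, bound its total variation via a localized Leibniz-type estimate, and then chain \eqref{eq:variational one and BV capacity} with the Sobolev inequality \eqref{eq:sobolev inequality} to transfer the bound to $\capa_1$.

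First I would pick an intermediate radius $r<R'<R$ and a Lipschitz cutoff $\eta\colon X\to[0,1]$ with $\eta\equiv 1$ on $B(x,r)$, $\supp\eta\subset\overline{B}(x,R')$ and $\Lip(\eta)\le 1/(R'-r)$, and set $v_j:=\eta\,\ch_{E_j}$. To check admissibility I would verify pointwise that $v_j^{\vee}\ge 1$ on $\overline{E_j}^m\cap B(x,r)$ and $v_j^{\wedge}=v_j^{\vee}=0$ on $X\setminus B(x,R)$. The first uses that for any $y\in\overline{E_j}^m\cap B(x,r)=(I_{E_j}\cup\partial^*E_j)\cap B(x,r)$, $\eta$ equals $1$ in a neighborhood of $y$, so $\{v_j>t\}$ agrees with $E_j$ near $y$ for every $t<1$; the definition of $I_{E_j}$ and $\partial^*E_j$ then forces $E_j$, and hence $\{v_j>t\}$, to have positive upper density at $y$. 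The second holds because $\overline{B}(x,R')\subset B(x,R)$, so $v_j$ vanishes in a neighborhood of any $y\in X\setminus B(x,R)$.

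The main step is to bound $\|Dv_j\|(X)$ by the Leibniz-type inequality
\[
\|D(\eta u)\|(X)\le\int_X\eta\,d\|Du\|+\Lip(\eta)\int_X|u|\,d\mu,
\]
which one obtains by approximating $\ch_{E_j}$ in $L^1_{\loc}(X)$ by $N^{1,1}_{\loc}$-functions $u_k$ whose minimal upper gradients recover $\|D\ch_{E_j}\|$, noting that $\eta g_{u_k}+\Lip(\eta)|u_k|$ is an upper gradient of $\eta u_k$, and invoking lower semicontinuity of the total variation. Since $\supp\eta\subset\overline{B}(x,R')\subset B(x,R)$, the right-hand side is controlled by
\[
P(E_j,B(x,R))+\frac{\mu(E_j)}{R'-r},
\]
which tends to $0$ as $j\to\infty$ by hypothesis.

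Finally, \eqref{eq:variational one and BV capacity} gives $\rcapa_1(\overline{E_j}^m\cap B(x,r),B(x,R))\le C_{\mathrm r}\|Dv_j\|(X)$, and since $R<\tfrac14\diam X$, applying \eqref{eq:sobolev inequality} to any $u\in N^{1,1}(X)$ vanishing outside $B(x,R)$ yields $\|u\|_{N^{1,1}(X)}\le(1+C_SR)\int_Xg_u\,d\mu$, so $\capa_1(A)\le(1+C_SR)\rcapa_1(A,B(x,R))$ for any $A\subset B(x,R)$. Putting these together gives $\capa_1(\overline{E_j}^m\cap B(x,r))\to 0$. The delicate part is the localized Leibniz bound: one must ensure that the perimeter contribution on the right-hand side is picked up only on $B(x,R)$, which is essential because $P(E_j,X)$ is not assumed finite.
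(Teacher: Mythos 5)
Your proof is correct and takes essentially the same route as the paper: both use a Lipschitz cutoff $\eta$ compactly supported in $B(x,R)$ with $\eta\equiv 1$ on $B(x,r)$, bound $\|D(\eta\ch_{E_j})\|(X)\le P(E_j,B(x,R))+C\mu(E_j)$ via a Leibniz estimate (the paper cites \cite[Proposition 4.2]{KKST3}; you sketch the approximation argument directly), then chain $\rcapa^{\vee}_{\BV}$, inequality \eqref{eq:variational one and BV capacity}, and the Sobolev inequality to reach $\capa_1$. Your remark about the need to localize the perimeter contribution to $B(x,R)$ is exactly the point that makes $\supp\eta\Subset B(x,R)$ essential, and the paper handles it the same way by noting $\|D(\ch_{E_j}\eta)\|(X)=\|D(\ch_{E_j}\eta)\|(B(x,R))$.
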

\begin{proof}
Take a cutoff function $\eta\in \Lip_c(B(x,R))$ with $0\le \eta\le 1$ on $X$,
$\eta=1$ in $B(x,r)$, and $g_\eta\le 2/(R-r)$, where $g_{\eta}$ is the minimal
$1$-weak upper gradient of $\eta$.
Then for all $j\in\N$, by a Leibniz rule (see
\cite[Proposition 4.2]{KKST3}) we have
\[
\Vert D(\ch_{E_j}\eta)\Vert(X)=\Vert D(\ch_{E_j}\eta)\Vert(B(x,R))
\le \frac{2\mu(E_j)}{R-r}+P(E_j,B(x,R))\to 0
\]
as $j\to\infty$.
By \eqref{eq:variational one and BV capacity} and the fact that
$(\ch_{E_j}\eta)^{\vee}=1$ in $\overline{E_j}^m\cap B(x,r)$, we get
\begin{align*}
\rcapa_1(\overline{E_j}^m\cap B(x,r),B(x,R))
&\le C_{\textrm{r}}\rcapa_{\BV}^{\vee}(\overline{E_j}^m\cap B(x,r),B(x,R))\\
&\le \Vert D(\ch_{E_j}\eta)\Vert(X)\to 0\quad\textrm{as }j\to\infty.
\end{align*}
Then by the Sobolev inequality \eqref{eq:sobolev inequality} we easily
get
\[
\capa_1(\overline{E_j}^m\cap B(x,r))\to 0.
\]
\end{proof}

The variation measure is always absolutely continuous with respect to
the $1$-capacity, in the following sense.

\begin{lemma}[{\cite[Lemma 3.8]{L-SA}}]\label{lem:variation measure and capacity}
	Let $\Omega\subset X$ be an open set and
	let $u\in L^1_{\loc}(\Omega)$ with $\Vert Du\Vert(\Omega)<\infty$. Then for every $\eps>0$ there exists $\delta>0$ such that if $A\subset \Omega$ with $\capa_1 (A)<\delta$, then $\Vert Du\Vert(A)<\eps$.
\end{lemma}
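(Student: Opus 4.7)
The plan is to argue by contradiction and reduce the $(\eps,\delta)$ statement to its null-set version, which will then be handled via the coarea formula \eqref{eq:coarea} together with the representation \eqref{eq:def of theta} and the (known) coincidence of $\capa_1$-null sets with $\mathcal{H}$-null sets in the present setting.

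Suppose the conclusion fails. Then there exist $\eps_0>0$ and sets $A_n\subset\Omega$ with $\capa_1(A_n)<2^{-n}$ and $\|Du\|(A_n)\geq\eps_0$ for every $n\in\N$. Since $\capa_1$ is an outer capacity and $\|Du\|(A_n)$ is by definition the infimum of $\|Du\|(W)$ over open $W\supset A_n$, I can replace each $A_n$ by an open set $W_n\supset A_n$ satisfying $\capa_1(W_n)<2^{-n+1}$ and $\|Du\|(W_n)\geq\eps_0$. Setting $V_n:=\bigcup_{k\geq n}W_k$ (open) and $V:=\bigcap_n V_n$ (Borel), countable subadditivity of $\capa_1$ gives $\capa_1(V_n)\leq 2^{-n+2}$, hence $\capa_1(V)=0$. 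Since $\|Du\|$ restricted to Borel sets is a finite measure with $\|Du\|(V_1)\leq\|Du\|(\Omega)<\infty$, downward continuity on the decreasing sequence of open sets $V_n$ yields
\[
\|Du\|(V)=\lim_{n\to\infty}\|Du\|(V_n)\geq \eps_0.
\]

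It therefore suffices to prove that $\|Du\|(V)=0$ for every Borel $V\subset\Omega$ with $\capa_1(V)=0$. For this I would invoke the equivalence, valid in the present metric setting, between sets of $1$-capacity zero and sets of codimension-one Hausdorff measure zero: combining the comparability \eqref{eq:Newtonian and BV capacities are comparable} with the $\capa_{\BV}$-versus-$\mathcal{H}$ comparisons from \cite{HaKi}, any $\capa_1$-null Borel set $V$ satisfies $\mathcal{H}(V)=0$. Then \eqref{eq:def of theta} applied to the superlevel sets of $u$ gives, for every $t\in\R$,
\[
P(\{u>t\},V)=\int_{\partial^{*}\{u>t\}\cap V}\theta_{\{u>t\}}\,d\mathcal{H}\leq C_d\,\mathcal{H}(V)=0,
\]
and since $\|Du\|(\Omega)<\infty$, the coarea formula \eqref{eq:coarea} holds on the Borel set $V$ and produces
\[
\|Du\|(V)=\int_{-\infty}^{\infty}P(\{u>t\},V)\,dt=0,
\]
contradicting the previous display.

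The main obstacle is the implication $\capa_1(V)=0\Rightarrow\mathcal{H}(V)=0$. Unlike the case $p>1$, where such null-set equivalences follow from standard potential theory, the borderline case $p=1$ is subtler and requires passing through the $\BV$-capacity; care is also needed to ensure that $\capa_1$ is outer (so that the open replacement $W_n\supset A_n$ is legitimate) and that $\|Du\|$ admits the downward continuity used above, both of which hinge on $\|Du\|(\Omega)<\infty$. Once the null-set equivalence is in hand, the Borel--Cantelli-style reduction and the final coarea computation are essentially routine.
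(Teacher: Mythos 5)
The paper does not prove this lemma; it is quoted verbatim from \cite[Lemma 3.8]{L-SA}, so there is no in-paper argument to compare against. Your blind proof is correct in substance and is close in spirit to the one in the cited reference: the reduction via a Borel--Cantelli-type intersection to the null-set case, followed by the coarea formula \eqref{eq:coarea} on Borel sets and the representation \eqref{eq:def of theta} with $\theta_E\le C_d$, all goes through, and the key external ingredient --- that $\capa_1$-null sets are $\mathcal H$-null --- is indeed available by combining \eqref{eq:Newtonian and BV capacities are comparable} with the results of \cite{HaKi} identifying the null sets of $\capa_{\BV}$ and of $\mathcal H$. The one step you should justify more carefully is the replacement of $A_n$ by an open $W_n$ with comparable capacity: outer regularity of the Newtonian capacity $\capa_1$ in the borderline case $p=1$ is not among the facts recorded in this paper and is itself nontrivial. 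You can sidestep it by passing to $\capa_{\BV}$, which \cite{HaKi} shows is an outer capacity and two-sidedly comparable to $\capa_1$; since you only need $\capa_1(V)=0$ (equivalently $\mathcal H(V)=0$) at the end, the multiplicative constants incurred are harmless. With that adjustment the argument is complete.
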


\begin{lemma}\label{lem:coincidence of perimeter}
Let $\Om\subset X$ be open,
let $F_1\subset F_2\subset X$ with $P(F_1,\Om)<\infty$ and $P(F_2,\Om)<\infty$,
and let $A\subset \Om$
such that for all $x\in A$, we have
\[
\lim_{r\to 0}\frac{\mu(B(x,r)\cap (F_2\setminus F_1))}{\mu(B(x,r))}=0.
\]
Then $P(F_1,A)=P(F_2,A)$.
\end{lemma}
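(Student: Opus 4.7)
The plan is to reduce the equality to showing $P(G,A)=0$, where $G := F_2\setminus F_1 = F_2\cap(X\setminus F_1)$. Since $P(X\setminus F_1,\Om)=P(F_1,\Om)$, the lattice inequality \eqref{eq:lattice property of sets of finite perimeter} immediately gives $P(G,\Om)<\infty$. The hypothesis on $A$ rephrases as $A\subset O_G$, so in particular $A\cap\partial^*G=\emptyset$.

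For the key step $P(G,A)=0$, I would invoke the representation formula \eqref{eq:def of theta}, which gives $P(G,B)=0$ for any Borel $B\subset\Om$ disjoint from $\partial^*G$. The measure $\|D\ch_G\|(\cdot)$ is a Borel regular outer measure on $\Om$ (by \cite[Theorem 3.4]{M}, since $P(G,\Om)<\infty$), so for every Borel $B\supset A$ contained in $\Om$ monotonicity yields $P(G,A)\le\|D\ch_G\|(B)$. Taking $B:=\Om\setminus\partial^*G$---which is Borel after first passing to a Borel representative of $G$ if needed to ensure measurability of the upper densities---then gives $P(G,A)\le\int_{\partial^*G\cap B}\theta_G\,d\mathcal H=0$.

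The conclusion follows from two applications of \eqref{eq:lattice property of sets of finite perimeter}. With $(E_1,E_2)=(F_2,X\setminus G)$, noting $F_2\cap(X\setminus G)=F_1$ and $P(X\setminus G,A)=P(G,A)$, one obtains
\[
P(F_1,A)\le P(F_2,A)+P(G,A).
\]
With $(E_1,E_2)=(F_1,G)$, using $F_1\cap G=\emptyset$ and $F_1\cup G=F_2$, one obtains
\[
P(F_2,A)\le P(F_1,A)+P(G,A).
\]
Combined with $P(G,A)=0$, these give the desired equality. The only technical point worth flagging is the Borel measurability of $\partial^*G$ needed to apply Borel regularity on the possibly non-Borel set $A$; this is a standard measurability check rather than a genuine obstacle to the argument.
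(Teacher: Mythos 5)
Your proof is correct and follows essentially the same route as the paper's: establish $P(F_2\setminus F_1,\Om)<\infty$ via the lattice inequality \eqref{eq:lattice property of sets of finite perimeter}, deduce $P(F_2\setminus F_1,A)=0$ from the representation formula \eqref{eq:def of theta} since $A$ avoids $\partial^*(F_2\setminus F_1)$, and close with two more applications of the lattice inequality. The extra measurability remark is sensible but not really an issue, since $\partial^*(F_2\setminus F_1)$ is automatically Borel and $P(F_2\setminus F_1,\cdot)$ is monotone on arbitrary sets by definition.
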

\begin{proof}
First note that $P(F_2\setminus F_1,\Om)<\infty$
by \eqref{eq:lattice property of sets of finite perimeter},
and then by \eqref{eq:def of theta} we have
\[
P(F_2\setminus F_1,A)=0.
\]
Using \eqref{eq:lattice property of sets of finite perimeter} again, we have
\[
P(F_2,A)\le P(F_1,A)+P(F_2\setminus F_1,A)=P(F_1,A)
\]
and
\[
P(F_1,A)\le P(F_2,A)+P(F_2\setminus F_1,A)=P(F_2,A).
\]
\end{proof}

The following lemma says that perimeter can always be controlled by
the measure of a suitable ``curve boundary''.

\begin{lemma}\label{lem:perimeter controlled by boundary}
Let $\Om\subset X$ be open, let $E\subset X$ be closed,
and let $A\subset \Om$ be such that $1$-a.e. curve
$\gamma$ in $\Om$ with $\gamma(0)\in I_E$ and
$\gamma(\ell_{\gamma})\in X\setminus E$
intersects $A$. Then $P(E,\Om)\le C_d\mathcal H(A)$. 
\end{lemma}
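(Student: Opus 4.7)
I would prove $P(E,\Omega)\le C_d\mathcal H(A)$ by producing, for every $\eps>0$ and every small $R>0$, an approximating Newton--Sobolev function $u_{\eps,R}\in N^{1,1}_{\mathrm{loc}}(\Omega)$ that agrees with $\chi_E$ outside a small open set $U_{\eps,R}$ whose $\mu$-measure tends to zero, and whose upper gradient has integral at most $C_d(\mathcal H_R(A)+\eps)+\eps$. The definition of the total variation will then yield the bound after letting $\eps,R\to 0$. We may assume $\mathcal H(A)<\infty$, else there is nothing to prove.

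For the construction I would cover $A$ by balls $B_j=B(x_j,r_j)$ with $r_j\le R$ and $\sum_j\mu(B_j)/r_j\le\mathcal H_R(A)+\eps$, set $U_{\eps,R}:=\bigcup_j 2B_j$, and form the Borel function $\rho:=\sum_j r_j^{-1}\chi_{2B_j}$, which has $\int_X\rho\,d\mu\le C_d(\mathcal H_R(A)+\eps)$. The hypothesis says that the family $\Gamma_0$ of curves in $\Omega$ from $I_E$ to $X\setminus E$ that \emph{miss} $A$ has $\Mod_1(\Gamma_0)=0$, so there is a Borel $\sigma\ge 0$ with $\int_X\sigma\,d\mu\le\eps$ and $\int_\gamma\sigma\,ds\ge 1$ for every $\gamma\in\Gamma_0$. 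Put $\tilde\rho:=\rho+\sigma$ and define
\[
u_{\eps,R}(x):=\min\!\left(1,\;\inf_\gamma\int_\gamma\tilde\rho\,ds\right),
\]
with the infimum ranging over all curves in $\Omega$ from $x$ to $X\setminus E$, and $u_{\eps,R}(x):=0$ if $x\in X\setminus E$.

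The key geometric observation is that $u_{\eps,R}\equiv 1$ on $I_E\setminus U_{\eps,R}$. Indeed, any curve $\gamma$ in $\Omega$ from such a point to $X\setminus E$ either misses $A$ (so lies in $\Gamma_0$ and $\int_\gamma\sigma\,ds\ge 1$), or meets $A$ at some $p\in A\cap B_j$; in the latter case, since $\gamma(0)\notin 2B_j$ while $p\in B_j$, the triangle inequality forces $\gamma$ to traverse a subarc of length at least $r_j$ inside $2B_j$, so $\int_\gamma\rho\,ds\ge (1/r_j)\cdot r_j=1$. A routine concatenation argument then shows $\tilde\rho$ is an upper gradient of $u_{\eps,R}$ in $\Omega$, and $u_{\eps,R}$ (or a measurable representative obtained by taking the lower semicontinuous envelope) belongs to $N^{1,1}_{\mathrm{loc}}(\Omega)$.

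Since $u_{\eps,R}=\chi_E$ on $\Omega\setminus U_{\eps,R}$ (up to the $\mu$-null set $E\setminus I_E$), $0\le u_{\eps,R}\le 1$, and
\[
\mu(U_{\eps,R})\le C_d\sum_j\mu(B_j)\le C_d R\bigl(\mathcal H(A)+\eps\bigr)\longrightarrow 0
\]
as $R\to 0$, dominated convergence gives $u_{\eps,R}\to\chi_E$ in $L^1_{\mathrm{loc}}(\Omega)$. The definition of the total variation then delivers
\[
P(E,\Omega)=\|D\chi_E\|(\Omega)\le\liminf_{\eps,R\to 0}\int_\Omega\tilde\rho\,d\mu\le C_d\mathcal H(A).
\]
The main obstacle is the technical treatment of the $1$-modulus zero exceptional curves and of the measurability of $u_{\eps,R}$: the Fuglede-style correction $\sigma$ is designed precisely to absorb the former, and the infimum-of-integrals construction produces a lower semicontinuous representative handling the latter.
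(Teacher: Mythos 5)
Your construction is essentially the same as the paper's: cover $A$ by small balls giving a function $\rho=\sum_j r_j^{-1}\chi_{2B_j}$, absorb the exceptional family with a Fuglede correction $\sigma$, define $u$ by taking the truncated infimum of $\int_\gamma(\rho+\sigma)\,ds$ over curves to the complement of $E$, observe that $u=\chi_E$ a.e.\ off $\bigcup_j 2B_j$, and invoke lower semicontinuity of the total variation. The only cosmetic differences are that the paper takes the target set to be $\Om\setminus(E\cup\bigcup_j 2B_j)$ rather than $X\setminus E$, and cites \cite[Theorem 1.11]{JJRRS} for the measurability of the infimum function where you gesture at a lower-semicontinuous-envelope argument, but the underlying argument is identical.
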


\begin{proof}
We can assume that $\mathcal H(A)<\infty$.
Fix $\eps>0$. We find a covering of $A$
by balls $\{B_j=B(x_j,r_j)\}_{j\in I}$, with $I\subset\N$, such that $r_j\le \eps$ 
and 
\begin{equation}\label{eq:covering for A}
\sum_{j\in I}\frac{\mu(B_j)}{r_j}\le \mathcal{H}(A)+\eps.
\end{equation}
Denote the exceptional family of curves by $\Gamma$.
Take a nonnegative Borel function $\rho$ such that $\Vert \rho\Vert_{L^1(\Om)}<\eps$
and $\int_{\gamma}\rho\,ds\ge 1$ for all $\gamma\in\Gamma$.
Let
\[
g:=\sum_{j\in I}\frac{\ch_{2B_j}}{r_j}+\rho.
\]
Then let
\[
u(x):=\min\left\{1,\inf \int_{\gamma}g\,ds\right\},
\]
where the infimum is taken over curves $\gamma$ (also constant curves)
in $\Om$ with $\gamma(0)= x$ and
$\gamma(\ell_{\gamma})\in \Om\setminus \left(E\cup\bigcup_{j\in I}2B_j\right)$.
We know that $g$ is an upper gradient of $u$ in $\Om$,
see \cite[Lemma 5.25]{BB}. Moreover, $u$ is $\mu$-measurable
by \cite[Theorem 1.11]{JJRRS}; strictly speaking this result is written for
functions defined on the whole space, but the proof clearly works also for functions
defined in an open set such as $\Om$.
If $x\in \Om\setminus \left(E\cup\bigcup_{j\in I}2B_j\right)$,
clearly $u(x)=0$.
If $x\in I_E\setminus \bigcup_{j\in I}2B_j$, consider any curve
$\gamma$ in $\Om$ with $\gamma(0)= x$ and
$\gamma(\ell_{\gamma})\in \Om\setminus \left(E\cup\bigcup_{j\in I}2B_j\right)$.
Then either $\int_{\gamma}\rho\,ds\ge 1$ or there is $t$ such that
$\gamma(t)\in A$. In the latter case,
for some $j\in I$ we have $\gamma(t)\in B_j$. Then
\[
\int_{\gamma}g\,ds\ge \int_{\gamma}\frac{\ch_{2B_j}}{r_j}\,ds\ge 1.
\]
Thus  $u(x)=1$, and so by Lebesgue's differentiation theorem we have
$u=\ch_E$ a.e. in $\Om\setminus \bigcup_{j\in I}2B_j$. Thus
\begin{align*}
\int_{\Omega}|u-\ch_E|\, d\mu
&\le \int_{\Omega} \ch_{\bigcup_{j\in I}2B_j}\, d\mu\le \sum_{j\in I}\mu(2B_j)
\le \eps\sum_{j\in I} \frac{\mu(2B_j)}{r_j}
\le \eps (C_d\mathcal{H}(A)+\eps).
\end{align*}
Moreover, using \eqref{eq:covering for A} we get
\[
\int_{\Om}g\,d\mu\le  \sum_{j\in I}\int_{\Om}\frac{\ch_{2B_j}}{r_j}\,d\mu
+\int_{\Om}\rho\,d\mu\le C_d\mathcal H(A)+C_d \eps +\eps.
\]

Now for each $i\in\N$, use the above construction to obtain functions
$u_i\in N^{1,1}_{\loc}(\Omega)$ and upper gradients
$g_i\in L^1(\Omega)$ corresponding to $\eps=1/i$.
We have
\[
\int_{\Omega}|u_i-\ch_E|\, d\mu\le i^{-1} (C_d\mathcal{H}(A)+i^{-1})\to 0
\quad\textrm{as }i\to \infty
\]
and thus
\[
P(E,\Om)\le \liminf_{i\to\infty}\int_{\Om}g_i\,d\mu
\le \liminf_{i\to\infty}(C_d\mathcal H(A)+C_d i^{-1}+i^{-1})=C_d\mathcal H(A).
\]

\end{proof}

\begin{proposition}\label{prop:sum of perimeters of components}
Let $B(x,R)$ be a ball with $0<R<\frac{1}{4}\diam X$ and let $F\subset X$ be a closed set
with $P(F,X)<\infty$.
Denote the components of $F\cap \overline{B}(x,R)$ having nonzero
$\mu$-measure by $F_1,F_2,\ldots$.
Then
\[
\sum_{j=1}^{\infty}P(F_j,B(x,R))<\infty,
\]
and for any sets $A_j\subset F_j$
with $P(A_j,B(x,R))<\infty$ for all $j\in\N$ we have
\begin{equation}\label{eq:perimeter of union and sum of Ajs is the same}
P\Bigg(\bigcup_{j=1}^{\infty}A_j,B(x,R)\Bigg)
=\sum_{j=1}^{\infty}P(A_j,B(x,R)).
\end{equation}
\end{proposition}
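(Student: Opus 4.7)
My plan is to prove the two claims separately, using the component structure of the compact set $K := F \cap \overline{B}(x, R)$ together with the curve-boundary bound of Lemma~\ref{lem:perimeter controlled by boundary}.

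\textbf{Summability.} For each $j$ I aim to control $P(F_j, B(x,R))$ by the $\mathcal H$-measure of the piece of $\partial^*F$ that lies inside $F_j$. Fix a $1$-a.e. curve $\gamma$ in $B(x,R)$ with $\gamma(0)\in I_{F_j}$ and $\gamma(\ell_{\gamma})\in X\setminus F_j$. By Proposition~\ref{prop:ae curve goes through boundary}, $\gamma^{-1}(I_F)$ and $\gamma^{-1}(O_F)$ are disjoint relatively open subsets of $[0,\ell_\gamma]$. Since $F_j$ is a component of $K$ and $\gamma$ stays in $\overline B(x,R)$, any connected subarc of $\gamma$ that lies in $F$ and starts at $\gamma(0)$ must stay inside $F_j$; hence $\gamma$ must exit $I_F$ before the endpoint. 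If $t_1$ is the first such exit time, the simultaneous openness of $\gamma^{-1}(I_F)$ and $\gamma^{-1}(O_F)$ forces $\gamma(t_1)\in\partial^*F$, and $\gamma([0,t_1])\subset\overline{I_F}\subset F$ is connected and starts in $F_j$, so $\gamma(t_1)\in F_j\cap\partial^*F\cap B(x,R)$. Lemma~\ref{lem:perimeter controlled by boundary} then gives $P(F_j,B(x,R))\le C_d\,\mathcal H(F_j\cap\partial^*F\cap B(x,R))$. Disjointness of the $F_j$ together with \eqref{eq:def of theta} yields
\[
\sum_{j}P(F_j,B(x,R))\;\le\;C_d\,\mathcal H(\partial^*F\cap B(x,R))\;\le\;\tfrac{C_d}{\alpha}\,P(F,B(x,R))\;<\;\infty.
\]

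\textbf{Additivity.} The inequality $P(\bigcup_j A_j,B(x,R))\le\sum_j P(A_j,B(x,R))$ is \eqref{eq:perimeter of countable union}. For the reverse I first establish finite additivity. For any $N$, the compact sets $F_1,\dots,F_N$ are pairwise disjoint in $X$, so $r_0:=\tfrac12\min_{j\ne k,\,j,k\le N}\dist(F_j,F_k)>0$, and the $r_0$-neighborhoods $U_j$ of $F_j$ are pairwise disjoint open sets with $U_j\cap F_k=\emptyset$ for all $j\ne k$, $j,k\le N$. Since $A_j\subset F_j$ is a subset of the closed set $F_j$, any $y\notin F_j$ satisfies $B(y,s)\cap A_j=\emptyset$ for small $s$, so $\partial^*A_j\subset F_j\subset U_j$. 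By locality of the variation measure on the open set $U_j$, $\|D\chi_{\bigcup_{k\le N}A_k}\|$ agrees with $\|D\chi_{A_j}\|$ on $U_j$, and it vanishes on $B(x,R)\setminus\bigcup_j U_j$, being supported on $\partial^*(\bigcup_{k\le N}A_k)\subset\bigcup_{k\le N}F_k\subset\bigcup_j U_j$. Summing yields
\[
P\Bigl(\bigcup_{k=1}^{N}A_k,\,B(x,R)\Bigr)\;=\;\sum_{k=1}^{N}P(A_k,B(x,R)).
\]
Combining this identity with \eqref{eq:lattice property of sets of finite perimeter} applied to the disjoint pair $\bigcup_{k\le N}A_k$ and $\bigcup_{k>N}A_k$, together with \eqref{eq:perimeter of countable union}, produces
\[
\sum_{k=1}^{N}P(A_k,B(x,R))\;\le\;P\Bigl(\bigcup_{k}A_k,\,B(x,R)\Bigr)+\sum_{k>N}P(A_k,B(x,R)),
\]
so letting $N\to\infty$ gives the equality whenever $\sum_k P(A_k,B(x,R))<\infty$; the case $\sum_k=\infty$ is forced by the finite identity together with lower semicontinuity of perimeter.

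\textbf{Main obstacle.} The most delicate step is the curve argument in the summability claim: one must verify that the first exit of $\gamma$ from $I_F$ genuinely lands in $\partial^*F$, not in $O_F$, and specifically inside $F_j$. This rests on the simultaneous $1$-path-openness of $I_F$ and $O_F$ provided by Proposition~\ref{prop:ae curve goes through boundary}, together with the observation that any connected subarc of $\gamma$ contained in $F$ and starting in $F_j$ cannot reach any other component of $K$ without first leaving $F$. Once this is in place, the summability follows from Lemma~\ref{lem:perimeter controlled by boundary} and the representation \eqref{eq:def of theta}, while the additivity reduces to the clean finite identity obtained from locality of the variation measure on pairwise disjoint open neighborhoods of finitely many components, combined with a lattice-tail limit.
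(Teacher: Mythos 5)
Your summability argument is essentially the same as the paper's: define a first exit time from $I_F$, use the $1$-path-openness of $I_F$ and $O_F$ from Proposition~\ref{prop:ae curve goes through boundary} to force a landing in $\partial^*F$, observe that the connected subarc up to that time stays in $F_j$, and then invoke Lemma~\ref{lem:perimeter controlled by boundary} together with \eqref{eq:def of theta}. That part is correct. Your additivity argument is genuinely different from the paper's, and more elementary where it works: the paper establishes the reverse inequality by shaving off $\overline{\bigcup_{j>M}A_j}^m$, showing it has small capacity via Lemma~\ref{lem:inner capacity}, and invoking Lemmas~\ref{lem:coincidence of perimeter} and \ref{lem:variation measure and capacity}; you instead prove finite additivity directly from locality of the variation measure on disjoint open neighborhoods of the compact $F_1,\dots,F_N$ (together with the fact that $\partial^*A_j\subset F_j$ and \eqref{eq:def of theta}), and then pass to the limit via a lattice-tail estimate. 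This avoids the capacity machinery entirely for the additivity step.

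There is, however, a genuine gap in the last sentence of your additivity argument. When $\sum_k P(A_k,B(x,R))=\infty$ (which the hypotheses do allow, since $P(A_j,B(x,R))$ can greatly exceed $P(F_j,B(x,R))$ even though $A_j\subset F_j$), your displayed inequality $\sum_{k\le N}P(A_k)\le P(\bigcup_k A_k)+\sum_{k>N}P(A_k)$ is vacuous because the tail is $\infty$ for every $N$, and lower semicontinuity goes the wrong way: it yields $P(\bigcup_k A_k,B(x,R))\le\liminf_N P(\bigcup_{k\le N}A_k,B(x,R))$, not $\ge$. Perimeter is not monotone, so the finite identity by itself does not force $P(\bigcup_k A_k,B(x,R))=\infty$. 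The fix is easy and uses the first half of the proposition: since $\bigcup_{k\le N}A_k = \big(\bigcup_k A_k\big)\cap\bigcup_{k\le N}F_k$, the lattice inequality \eqref{eq:lattice property of sets of finite perimeter} gives
\[
\sum_{k=1}^N P(A_k,B(x,R)) = P\Bigl(\bigcup_{k\le N}A_k,B(x,R)\Bigr)
\le P\Bigl(\bigcup_k A_k,B(x,R)\Bigr)+\sum_{k=1}^{\infty}P(F_k,B(x,R)),
\]
and the last sum is finite by the summability you already established, so letting $N\to\infty$ shows that $\sum_k P(A_k,B(x,R))=\infty$ indeed forces $P(\bigcup_k A_k,B(x,R))=\infty$. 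With that patch your additivity argument is complete.
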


\begin{proof}
Let $\Gamma_b$ be the exceptional family of curves of
Proposition \ref{prop:ae curve goes through boundary};
then $\Mod_1(\Gamma_b)=0$.
Consider a component $F_j$; it is a closed set.
Consider a curve $\gamma\notin \Gamma_b$ in $B(x,R)$ with $\gamma(0)\in I_{F_j}$ and
$\gamma(\ell_{\gamma})\in X\setminus F_j$. Then $\gamma(0)\in I_F$.
Take
\[
t:=\max\{s\in [0,\ell_{\gamma}]:\,\gamma([0,s])\subset F_j\}.
\]
Clearly $t<\ell_{\gamma}$.
There cannot exist $\delta>0$ such that
$\gamma(s)\in F$ for all $s\in (t,t+\delta)$
because this would connect $F_j$ with at least
one other component of $F\cap \overline{B}(x,R)$.
Thus there are points $s_j\searrow t$ with $\gamma(s_j)\in X\setminus F\subset O_F$.
By Proposition \ref{prop:ae curve goes through boundary},
this implies that either $\gamma(t)\in\partial^*F$ or $\gamma(t)\in O_{F}$.
In the latter case, there is a point $\widetilde{t}\in (0,t)$
with $\gamma(\widetilde{t})\in\partial^*F$.
In both cases, we have found $t$ such that $\gamma(t)\in \partial^*F\cap F_j$.
Thus by Lemma \ref{lem:perimeter controlled by boundary},
\[
P(F_j,B(x,R))\le C_d\mathcal H(\partial^*F\cap F_j)
\]
and so
\begin{equation}\label{eq:perimeter sum is finite}
\begin{split}
\sum_{j=1}^{\infty}P(F_j,B(x,R))
&\le C_d \sum_{j=1}^{\infty} \mathcal H(\partial^*F\cap F_j)\\
&\le C_d \mathcal H(\partial^*F)\\
&\le C_d\alpha^{-1}P(F,X)\quad\textrm{by }\eqref{eq:def of theta}\\
&<\infty,
\end{split}
\end{equation}
as desired. Next note that one inequality in \eqref{eq:perimeter of union and sum of Ajs is the same} follows from \eqref{eq:perimeter of countable union}. To prove the other one, note that the sets $F_j$ are closed and then in fact compact,
and so for any $\mu$-measurable
sets $A_j\subset F_j$  with $P(A_j,B(x,R))<\infty$ for all $j\in\N$, we have
\begin{equation}\label{eq:distance between Aj and Ak}
\dist(A_j,A_k)\ge \dist(F_j,F_k)>0
\end{equation}
for all $j\neq k$. Take $N,M\in\N$ with $N\le M$. We have
(recall \eqref{eq:measure theoretic closure})
\begin{equation}\label{eq:perimeter of union of Ajs}
\begin{split}
P\Bigg(\bigcup_{j=1}^{\infty}A_j,B(x,R)\Bigg)
&\ge P\Bigg(\bigcup_{j=1}^{\infty}A_j,B(x,R)\setminus \overline{\bigcup_{j=M+1}^{\infty}A_j}^m\Bigg)\\
&=P\Bigg(\bigcup_{j=1}^{M}A_j,B(x,R)\setminus 
\overline{\bigcup_{j=M+1}^{\infty}A_j}^m\Bigg)\quad\textrm{by Lemma }\ref{lem:coincidence of perimeter}\\
&=\sum_{j=1}^{M} P\Bigg(A_j,B(x,R)\setminus\overline{\bigcup_{j=M+1}^{\infty}A_j}^m\Bigg)\quad\textrm{by }\eqref{eq:distance between Aj and Ak}\\
&\ge \sum_{j=1}^{N} P\Bigg(A_j,B(x,R)\setminus\overline{\bigcup_{j=M+1}^{\infty}A_j}^m\Bigg).
\end{split}
\end{equation}
By \eqref{eq:perimeter of countable union} and \eqref{eq:perimeter sum is finite},
we have
\[
P\Bigg(\bigcup_{j=M+1}^{\infty}F_j,B(x,R)\Bigg)
\le 
\sum_{j=M+1}^{\infty}P(F_j,B(x,R))
\to 0\quad\textrm{as }M\to \infty.
\]
Then by Lemma \ref{lem:inner capacity} we have
 \[
 \capa_1\Bigg(\overline{\bigcup_{j=M+1}^{\infty}A_j}^m\cap B(x,r)\Bigg)
 \le \capa_1\Bigg(\overline{\bigcup_{j=M+1}^{\infty}F_j}^m\cap B(x,r)\Bigg)
 \to 0\quad\textrm{as }M\to \infty
 \]
 for all $0<r<R$.
From \eqref{eq:perimeter of union of Ajs} and
Lemma \ref{lem:variation measure and capacity} we now get
\[
P\Bigg(\bigcup_{j=1}^{\infty}A_j,B(x,R)\Bigg)\ge \sum_{j=1}^{N} P(A_j,B(x,r)).
\]
Letting $r\nearrow R$ and $N\to\infty$, we get the conclusion.
\end{proof}

For any nonnegative $g\in L^1_{\loc}(X)$, define the centered
Hardy-Littlewood maximal function
\[
\mathcal M g(x):=\sup_{r>0}\,\vint{B(x,r)}g\,d\mu,\quad x\in X.
\]

Recall the definition of the exponent $s>1$ from
\eqref{eq:homogenous dimension}.
The argument in the following lemma was inspired by the study
of the so-called $\textrm{MEC}_p$-property in \cite{JJRRS}.

\begin{lemma}\label{lem:finding a positive measure component}
Let $B(x_0,r)$ be a ball and let $V\subset X$ be an open set with
\[
\capa_1(V\cap B(x_0,r))< \frac{1}{20 \cdot 10^s C_P C_d^7}\frac{\mu(B(x_0,r))}{r}.
\]
Then there is a connected subset of $\overline{B}(x_0,r/2)\setminus V$
with measure at least $\mu(B(x_0,r))/(4\cdot 10^s C_d^2)$.
\end{lemma}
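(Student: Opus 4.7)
I would argue by contradiction: suppose that every connected subset of $\overline{B}(x_0,r/2)\setminus V$ has $\mu$-measure strictly less than $M:=\mu(B(x_0,r))/(4\cdot 10^s C_d^2)$. The strategy is to produce a ``hub'' point $p\in B(x_0,r/10)\setminus V$ and to show that the path-component of $p$ in $\overline{B}(x_0,r/2)\setminus V$ has $\mu$-measure at least $M$; since a path-connected set is connected, this will contradict the assumption.

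To locate $p$, take $v\in N^{1,1}(X)$ admissible for $\capa_1(V\cap B(x_0,r))$ with $0\le v\le 1$, $v\ge 1$ on $V\cap B(x_0,r)$, and $\|v\|_{N^{1,1}(X)}<\epsilon:=\mu(B(x_0,r))/(20\cdot 10^sC_PC_d^7 r)$. Multiplying $v$ by a Lipschitz cutoff supported in $B(x_0,2r)$ and applying the Sobolev inequality \eqref{eq:sobolev inequality} bounds $\|v\|_{L^1(B(x_0,r))}$ by a constant times $r\epsilon$; by Chebyshev, $\mu(\{v>1/2\}\cap B(x_0,r))$ is then a small explicit multiple of $\mu(B(x_0,r))/(10^s C_P C_d^7)$. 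Since \eqref{eq:homogenous dimension} gives $\mu(B(x_0,r/10))\ge\mu(B(x_0,r))/(10^s C_d^2)$, the specific constants make the latter strictly exceed the former, so a point $p\in B(x_0,r/10)$ with $v(p)\le 1/2$ exists; the admissibility $v\ge 1$ on $V\cap B(x_0,r)$ then forces $p\notin V$.

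For each $y\in B(x_0,r/4)$ pick a geodesic $\gamma_y$ from $p$ to $y$; the triangle inequality yields $\gamma_y\subset\overline{B}(p,7r/20)\subset\overline{B}(x_0,r/2)$. If $\gamma_y\cap V=\emptyset$, then $y$ lies in the path-component of $p$ in $\overline{B}(x_0,r/2)\setminus V$. Otherwise $\gamma_y$ meets $V$ at some first point $z$ with $v(z)\ge 1$, so the upper-gradient property gives $\int_{\gamma_y}g_v\,ds\ge v(z)-v(p)\ge 1/2$. Denote by $T$ the set of these ``bad'' $y$. A MEC-style argument in the spirit of the MEC${}_p$-property (to which the author attributes the inspiration for the lemma) combining the standard pointwise Poincar\'e estimate $|v(w)-v(p)|\le Cd(w,p)(\M g_v(p)+\M g_v(w))$ applied at $w=z$ (after possibly refining $p$ so that $\M g_v(p)$ is also small, which is possible by a further weak-$(1,1)$ selection) with a covering argument over the pencil of geodesics emanating from $p$ shows $\mu(T)\le C' r\epsilon$. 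The explicit constants in the hypothesis make this strictly less than $\mu(B(x_0,r/4))-M$, so $B(x_0,r/4)\setminus T$ has measure at least $M$ and is contained in the path-component of $p$ in $\overline{B}(x_0,r/2)\setminus V$, yielding the required contradiction.

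The main obstacle is precisely the bound on $\mu(T)$: the witness to $y\in T$ is the intersection point $z\in\gamma_y\cap V$, not $y$ itself, so one cannot apply a pointwise estimate directly at $y$. Transferring the smallness of the exceptional set $\{\M g_v\ge c/r\}$ (which contains each such $z$) to smallness of $\mu(T)$ requires a covering/change-of-variables argument along the family of geodesics out of $p$, and this is where the geodesic and doubling assumptions, together with the full strength of the $(1,1)$-Poincar\'e inequality, enter essentially; it is also the step responsible for the large exponents of $C_d$ and $C_P$ in the constant $1/(20\cdot 10^s C_PC_d^7)$.
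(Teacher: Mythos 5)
Your proposal correctly identifies the shape of the argument (find a ``hub'' point in $B(x_0,r/10)$ using the smallness of the capacity and a maximal-function/Chebyshev selection, then attach to it a large family of curves avoiding $V$), but it leaves a genuine gap at precisely the step you flag as ``the main obstacle.'' You want to show $\mu(T)$ is small by sending each bad $y$ to the first point $z\in\gamma_y\cap V$, and then using that $z$ lies in $\{\M g_v\ \text{large}\}$. But this requires a Fubini/change-of-variables disintegration of $\mu$ along the pencil of geodesics emanating from $p$. No such disintegration is available in a general $\mathrm{PI}$ space: the geodesics from $p$ need not foliate a full-measure set, a single $z$ can be the hitting point for a set of $y$'s of very different size depending on direction, and doubling alone gives no control of this. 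In short, the smallness of $\{\M g_v\ \text{large}\}$ does not transfer to smallness of $T$ by any argument that is supplied or standard; this is not a computation to be filled in but the crux of the lemma.

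The paper's proof sidesteps the geodesic pencil entirely, and the key idea is worth stating because it is exactly what is missing from your sketch. Rather than using geodesics and then trying to control $g$ along them post hoc, it defines the good set by maximal functions at the \emph{endpoints}: with $g$ a lower semicontinuous upper gradient of $u$ of small $L^1$-norm, set $A:=\{\M g>(10C_PC_d^2 r)^{-1}\}$ and $D:=\{u\ge 1/2\}$ and fix $x\in B(x_0,r/10)\setminus(A\cup D)$. For each $y$ in the same good set, it considers the truncated and perturbed ``distance'' $v_k(y)=\inf_\gamma\int_\gamma(g_k+\delta)\,ds$ over curves in $B(x_0,r/2)$ from $x$ to $y$, where $g_k=\min\{g,k\}$ and $\delta=(100C_PC_d^2r)^{-1}$. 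A telescoping Poincar\'e estimate centred at both $x$ and $y$, using that both lie outside $A$, gives $v_k(y)\le 1/2$. This immediately produces, for each $k$, a curve with $\int_{\gamma_k}(g_k+\delta)\,ds\le 1/2$; the additive $\delta$ forces $\ell_{\gamma_k}\le (2\delta)^{-1}$, so Arzel\`a--Ascoli (plus the lower semicontinuity of curve integrals from \cite{JJRRS}) yields a limit curve $\gamma_y$ in $\overline{B}(x_0,r/2)$ with $\int_{\gamma_y}g\,ds\le 1/2$. Since $u=1$ on $V\cap B(x_0,r)$ and $u(x)\le 1/2$, the upper-gradient inequality forbids $\gamma_y$ from meeting $V$. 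The union of the $\gamma_y$ over $y\in B(x_0,r/10)\setminus(A\cup D)$ is connected and, by the weak-$(1,1)$ bounds on $\mu(A)$ and $\mu(D)$ together with \eqref{eq:homogenous dimension}, has measure at least $\mu(B(x_0,r))/(4\cdot 10^s C_d^2)$. So the role of the Poincar\'e inequality is not in a covering argument over geodesics but in the two-sided telescope that makes the $g$-distance between good points small; this is what lets one replace geodesics by near-minimizers of the $g$-length, which automatically avoid $V$.
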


\begin{proof}
Take $u\in N^{1,1}(X)$  with $u=1$ in $V\cap B(x_0,r)$ and
\[
\Vert u\Vert_{N^{1,1}(X)}<\frac{1}{20 \cdot 10^s C_P C_d^7}\frac{\mu(B(x_0,r))}{r}.
\]
Thus there is an upper gradient $g$ of $u$ with
\[
\Vert g\Vert_{L^1(X)}<\frac{1}{20 \cdot 10^s C_P C_d^7}\frac{\mu(B(x_0,r))}{r}.
\]
By the Vitali-Carath\'eodory theorem
(see e.g. \cite[p. 108]{HKST15}) we can assume that $g$ is lower semicontinuous. 
We define
\[
A:=\{\mathcal M g> (10C_P C_d^2 r)^{-1}\}\quad\textrm{and}\quad
D:=\{u\ge 1/2\}.
\]
Then by the weak $L^1$-boundedness of the maximal function
(see e.g. \cite[Lemma 3.12]{BB}) as well as
\eqref{eq:homogenous dimension}, we estimate
\[
\mu(A)\le 10 C_P C_d^5 r\Vert g\Vert_{L^1(X)}\le \frac{1}{2 \cdot 10^s C_d^2}\mu(B(x_0,r))
\le \frac{1}{2}\mu(B(x_0,r/10)).
\]
Similarly,
\[
\mu(D)\le 2\Vert u\Vert_{L^1(X)}\le \frac{1}{4}\mu(B(x_0,r/10)),
\]
and then
\begin{equation}\label{eq:measure of complement of A D}
\mu(B(x_0,r/10)\setminus (A\cup D))\ge \frac{1}{4}\mu(B(x_0,r/10))
\ge \frac{\mu(B(x_0,r))}{4\cdot 10^s C_d^2}.
\end{equation}
In particular, we can fix $x\in B(x_0,r/10)\setminus (A\cup D)$.
Let $\delta:=(100C_P C_d^2 r)^{-1}$.
For every $k\in\N$, let $g_k:=\min\{g,k\}$ and
\[
v_k(y):=\inf\int_{\gamma}(g_k+\delta)\,ds,\quad y\in B(x_0,r/2),
\]
where the infimum is taken over curves $\gamma$ (also constant curves)
in $B(x_0,r/2)$ with $\gamma(0)= x$ and $\gamma(\ell_{\gamma})=y$.
Then $g_k+\delta\le g+\delta$ is an upper gradient of $v_k$ in
$B(x_0,r/2)$
(see \cite[Lemma 5.25]{BB}) and $v_k$ is $\mu$-measurable
by \cite[Theorem 1.11]{JJRRS}.
Since the space is geodesic, each $v_k$ is $(k+\delta)$-Lipschitz
in $B(x_0,r/10)$ and thus all points in $B(x_0,r/10)$
are Lebesgue points of $v_k$.
Define $B_j:=B(x,2^{-j+1}r/10)$, for $j=0,1\ldots$.
By the Poincar\'e inequality,
\begin{equation}\label{eq:telescope at x}
\begin{split}
|v_k(x)-(v_k)_{B_0}|
\le \sum_{j=0}^{\infty}|(v_k)_{B_{j+1}}-(v_k)_{B_{j}}|
&\le C_d\sum_{j=0}^{\infty}\, \vint{B_j}|v_k-(v_k)_{B_j}|\,d\mu\\
&\le C_d C_P \sum_{j=0}^{\infty}\frac{2^{-j+1}r}{10}\vint{B_j}(g+\delta)\,d\mu\\
&\le  C_d C_P r(\mathcal M g(x)+ \delta)\\
&\le 1/8.
\end{split}
\end{equation}
Similarly, for every
$y\in B(x_0,r/10)\setminus (A\cup D)$ we have
\begin{equation}\label{eq:telescope at y}
|v_k(y)-(v_k)_{B(y,r/5)}|\le 1/8
\end{equation}
and 
\begin{equation}\label{eq:middle term}
\begin{split}
|(v_k)_{B(x,r/5)}-(v_k)_{B(y,r/5)}|
&\le 2C_d^2\vint{B(x,2r/5)}|v_k-(v_k)_{B(x,2r/5)}|\,d\mu\\
&\le 2C_d^2 C_P r\vint{B(x,2r/5)}(g+\delta)\,d\mu\\
&\le 2C_d^2 C_P r(\mathcal Mg(x)+\delta)\\
&\le 1/4.
\end{split}
\end{equation}
Combining \eqref{eq:telescope at x}, \eqref{eq:telescope at y},
and \eqref{eq:middle term}, we get
\[
v_k(y)= |v_k(x)-v_k(y)|\le 1/2.
\]
This means that there is a curve $\gamma_k$ in $B(x_0,r/2)$ with
$\gamma_k(0)= x$, $\gamma_k(\ell_{\gamma_k})=y$, and
$\int_{\gamma_k}(g_k+\delta)\,ds\le 1/2$, for every $k\in\N$.
Note that
\[
\ell_{\gamma_k}\le \frac{1}{\delta}\int_{\gamma_k}(g_k+\delta)\,ds\le \frac{1}{2\delta}.
\]
Consider the reparametrizations
$\widetilde{\gamma}_k(t):=\gamma_k(t\ell_{\gamma_k})$, $t\in [0,1]$.
By the Arzela-Ascoli theorem (see e.g. \cite[p. 169]{Roy}),
passing to a subsequence (not relabeled)
we find $\widetilde{\gamma}\colon [0,1]\to X$ such that
$\widetilde{\gamma}_k\to \widetilde{\gamma}$ uniformly.
It is straightforward to check that $\widetilde{\gamma}$ is continuous
and rectifiable.
Let $\gamma$ be the parametrization of $\widetilde{\gamma}$ by arc-length;
then $\gamma(0)= x$ and $\gamma(\ell_{\gamma})=y$, and
by \cite[Lemma 2.2]{JJRRS}, we have for every $k_0\in\N$ that
\[
\int_{\gamma}g_{k_0}\,ds\le \liminf_{k\to\infty}\int_{\gamma_k}g_{k_0}\,ds
\le \liminf_{k\to\infty}\int_{\gamma_k}g_{k}\,ds\le 1/2.
\]
Letting $k_0\to\infty$, we obtain
\[
\int_{\gamma}g\,ds\le 1/2.
\]
Note that if $\gamma$ intersected a point $z\in V$, then we would have
\[
\int_{\gamma}g\,ds \ge |u(x)-u(z)|> |1/2-1|=1/2,
\]
so this is not possible. Thus $\gamma$ is in $\overline{B}(x_0,r/2)\setminus V$;
let us denote this curve, and also its image, by $\gamma_y$.
Define the desired connected set as the union
\[
\bigcup_{y\in B(x_0,r/10)\setminus (A\cup D)}\gamma_y.
\]
By \eqref{eq:measure of complement of A D} this has measure at least
$\mu(B(x_0,r))/(4\cdot 10^s C_d^2)$.
\end{proof}

\begin{lemma}\label{lem:H has measure zero}
	Let $B(x,R)$ be a ball with $0<R<\frac{1}{4}\diam X$
	and let $F\subset X$ be a closed set
with $P(F,X)<\infty$.
Denote the components of $F\cap \overline{B}(x,R)$ having nonzero
$\mu$-measure by $F_1,F_2,\ldots$, and
	$H:=\overline{B}(x,R)\cap F\setminus \bigcup_{j=1}^{\infty} F_j$.
	Then $\mu(H)=0$.
\end{lemma}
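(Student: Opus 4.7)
The plan is to argue by contradiction, assuming $\mu(H)>0$. The strategy is to apply Lemma~\ref{lem:finding a positive measure component} with $V:=X\setminus F$ (which is open since $F$ is closed) centered at a suitable Lebesgue density point of $H$, obtaining a connected subset of $F\cap\overline{B}(x,R)$ of positive measure. Every connected subset of $F\cap\overline{B}(x,R)$ lies in a single component, so this subset must either be contained in one of the $F_j$ or in a zero-measure component (hence in $H$), and both options will lead to contradictions.

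First I would select the point $y\in H$. Since $H$ is Borel and $\mu(H)>0$, for $\mu$-a.e.\ $y\in H$ three properties hold: $y$ is a Lebesgue density point of $H$; the density of the finite Radon measure $P(F,\cdot)$ with respect to $\mu$ at $y$ is finite; and $y\in B(x,R)$. Using $\bigcup_j F_j=F\setminus H$, the density condition immediately gives the uniform bound
\[
\sup_{j}\frac{\mu(F_j\cap B(y,r))}{\mu(B(y,r))}\le \frac{\mu(B(y,r)\setminus H)}{\mu(B(y,r))}\to 0\quad\text{as }r\to 0,
\]
while the second property gives $r\,P(F,B(y,2r))/\mu(B(y,r))\to 0$, and the third allows $\overline{B}(y,r/2)\subset B(x,R)$ for all sufficiently small $r>0$.

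The key step is estimating $\capa_1((X\setminus F)\cap B(y,r))$ for small $r$. With a Lipschitz cutoff $\eta$ supported in $B(y,2r)$, equal to $1$ on $B(y,r)$, and satisfying $g_\eta\le 2/r$, the Leibniz rule used in the proof of Lemma~\ref{lem:inner capacity} yields
\[
\Vert D(\eta\,\ch_{X\setminus F})\Vert(X)\le \frac{2\mu(B(y,2r)\setminus F)}{r}+P(F,B(y,2r)).
\]
Since $\eta\,\ch_{X\setminus F}\equiv 1$ on the open set $(X\setminus F)\cap B(y,r)$ and vanishes outside $\supp\eta\subset B(y,2r)$, it is admissible for $\rcapa_{\BV}^{\vee}((X\setminus F)\cap B(y,r),B(y,2r))$. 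Combining this with \eqref{eq:variational one and BV capacity} and then bounding $\capa_1$ by the variational $1$-capacity via the Sobolev inequality \eqref{eq:sobolev inequality} gives
\[
\capa_1\bigl((X\setminus F)\cap B(y,r)\bigr)\le C\Bigl(\frac{\mu(B(y,2r)\setminus F)}{r}+P(F,B(y,2r))\Bigr),
\]
and by the choice of $y$ the right-hand side is $o(\mu(B(y,r))/r)$ as $r\to 0$. For $r$ small enough it falls below $\mu(B(y,r))/(20\cdot 10^s C_P C_d^7\,r)$, so Lemma~\ref{lem:finding a positive measure component} applied with $x_0=y$ produces a connected set $K\subset\overline{B}(y,r/2)\cap F\subset F\cap\overline{B}(x,R)$ with $\mu(K)\ge \mu(B(y,r))/(4\cdot 10^s C_d^2)$.

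To conclude, $K$ lies in a single component $\widetilde C$ of $F\cap\overline{B}(x,R)$. If $\widetilde C=F_j$ for some $j$, then $\mu(F_j\cap B(y,r))\ge \mu(K)\ge \mu(B(y,r))/(4\cdot 10^s C_d^2)$, contradicting the uniform bound above for all sufficiently small $r$; otherwise $\widetilde C\subset H$ has zero measure, forcing $0<\mu(K)\le \mu(\widetilde C)=0$. Either way we reach a contradiction, so $\mu(H)=0$. The main obstacle in this plan is the capacity estimate: one has to carefully transfer the Leibniz-type $\BV$ bound for $\eta\,\ch_{X\setminus F}$ into a genuine $\capa_1$ bound (through $\rcapa_{\BV}^\vee$ and the Sobolev inequality) and verify that both error terms really are $o(\mu(B(y,r))/r)$ at a $\mu$-typical point of $H$; the boundary case $y\in H\cap\partial B(x,R)$ is a minor technicality, since a Lebesgue density point of $H\subset\overline{B}(x,R)$ is automatically a density point of $\overline{B}(x,R)$.
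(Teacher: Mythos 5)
Your proof is essentially correct and follows the same overall strategy as the paper's proof: locate a good density point $y$ of $H$, bound $\capa_1$ of $(X\setminus F)$ in a small ball around $y$, apply Lemma~\ref{lem:finding a positive measure component} to extract a connected subset of $F$ of positive measure, and observe that it cannot fit into any component. But the internal mechanics differ in two ways that are worth noting. First, you invoke the Lebesgue differentiation theorem for the Radon measure $P(F,\cdot)$ with respect to $\mu$ directly, so that $rP(F,B(y,2r))/\mu(B(y,r))\to 0$ at $\mu$-a.e.\ $y$; the paper instead shows that $r P(\bigcup_j F_j,B(y,r))/\mu(B(y,r))\to 0$ off an $\mathcal H$-null set (via \eqref{eq:def of theta} and a covering argument), assumes for contradiction a smallness bound on $P(H,B(y,r))$, and only afterwards converts the resulting lower bound on $P(H,B(y,r))/\mu(B(y,r))$ into $\mu(I_H)=0$ by a final Vitali-type covering argument. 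Your route is more direct and sidesteps that last covering step, at the (modest) cost of invoking the differentiation theorem for $P(F,\cdot)$ rather than the more elementary codimension-one density calculation. Second, your capacity estimate passes through $\rcapa_{\BV}^\vee$, \eqref{eq:variational one and BV capacity}, and the Sobolev inequality (mirroring the proof of Lemma~\ref{lem:inner capacity}), whereas the paper estimates $\capa_{\BV}(B(y,r/2)\setminus F)$ directly and then applies \eqref{eq:Newtonian and BV capacities are comparable}; both are valid.

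One small imprecision: you need $\mu$-a.e.\ $y\in H$ to lie in the open ball $B(x,R)$ so that $\overline{B}(y,r/2)\subset\overline{B}(x,R)$ for small $r$. Your justification (``a Lebesgue density point of $H\subset\overline{B}(x,R)$ is automatically a density point of $\overline{B}(x,R)$'') does not by itself imply $y\in B(x,R)$, and a density point of $\overline{B}(x,R)$ on $\partial B(x,R)$ would not admit $\overline{B}(y,r/2)\subset\overline{B}(x,R)$. The correct justification is the one the paper uses at the end of its proof: since $X$ is geodesic, $\mu(\{y:d(y,x)=R\})=0$ by \cite[Corollary 2.2]{Buc}, so $\mu$-a.e.\ point of $H$ does lie in $B(x,R)$. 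With that citation inserted, your argument is complete.
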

\begin{proof}
It follows from Proposition \ref{prop:sum of perimeters of components}
that $P\left(\bigcup_{j=1}^{\infty} F_j,B(x,R)\right)<\infty$, and then
by \eqref{eq:lattice property of sets of finite perimeter} also
$P(H,B(x,R))<\infty$.
By \eqref{eq:def of theta} and
a standard covering argument (see e.g. the proof of \cite[Lemma 2.6]{KKST3}),
we find that
\[
\lim_{r\to 0}r\frac{P\left(\bigcup_{j=1}^{\infty} F_j,B(y,r)\right)}{\mu(B(y,r))}=0
\]
for all $y\in B(x,R)\setminus \left(\partial^*\big(\bigcup_{j=1}^{\infty} F_j\big)\cup N\right)$, with $\mathcal H(N)=0$, in particular for all
$y\in B(x,R)\cap I_H\setminus N$.

Take $y\in B(x,R)\cap I_H\setminus N$ (if it exists).
We find arbitrarily small $r>0$ such that $B(y,r) \subset B(x,R)$ and
\begin{equation}\label{eq:complement of H small}
\frac{\mu(B(y,r)\setminus H)}{\mu(B(y,r))}\le \frac{1}{80 \cdot 10^s C_P C_d^8 C_{\textrm{cap}}}
\end{equation}
and
\[
r\frac{P\left(\bigcup_{j=1}^{\infty} F_j,B(y,r)\right)}{\mu(B(y,r))}
\le \frac{1}{80 \cdot 10^s C_P C_d^8 C_{\textrm{Cap}}}.
\]
Now suppose that
\[
P(H,B(y,r))\le \frac{1}{80 \cdot 10^s C_P C_d^8 C_{\textrm{cap}}}\frac{\mu(B(y,r))}{r}.
\]
Then since  $H\cup\bigcup_{j=1}^{\infty}F_j=F\cap \overline{B}(x,R)$,
by \eqref{eq:lattice property of sets of finite perimeter} we get
\begin{align*}
P(F,B(y,r))
&\le P(H,B(y,r))+P\Bigg(\bigcup_{j=1}^{\infty}F_j,B(y,r)\Bigg)\\
&\le \frac{1}{40 \cdot 10^s C_P C_d^8 C_{\textrm{cap}}}\frac{\mu(B(y,r))}{r}.
\end{align*}
Define the Lipschitz function
\[
\eta:=\max\left\{0,1-\frac{\dist(\cdot,B(y,r/2))}{r/2}\right\},
\]
so that $0\le \eta\le 1$ on $X$, $\eta=1$ in $B(y,r/2)$,
$\eta=0$ in $X\setminus B(y,r)$, and
$g_{\eta}\le (2/r)\ch_{B(y,r)}$ (see \cite[Corollary 2.21]{BB}).
Then by a Leibniz rule (see
\cite[Proposition 4.2]{KKST3}), we have
\begin{align*}
\capa_{\BV}(B(y,r/2)\setminus F)
&\le \Vert D(\eta\ch_{X\setminus F})\Vert(X)\\
&\le P(F,B(y,r))+2\frac{\mu(B(y,r)\setminus F)}{r}\\
&\le P(F,B(y,r))+2\frac{\mu(B(y,r)\setminus H)}{r}\\
&\le \frac{1}{20 \cdot 10^s C_P C_d^8 C_{\textrm{cap}}}\frac{\mu(B(y,r))}{r}.
\end{align*}
Then by \eqref{eq:Newtonian and BV capacities are comparable},
\[
\capa_{1}(B(y,r/2)\setminus F)\le 
\frac{1}{20 \cdot 10^s C_P C_d^8}\frac{\mu(B(y,r))}{r}
<\frac{1}{20 \cdot 10^s C_P C_d^7}\frac{\mu(B(y,r/2))}{r/2}.
\]
Then by Lemma \ref{lem:finding a positive measure component},
there is a connected subset of $F\cap \overline{B}(y,r/4)$
with measure at least
\[
\frac{\mu(B(y,r/2))}{4\cdot 10^s C_d^2}\ge \frac{\mu(B(y,r))}{4\cdot 10^s C_d^3}.
\]
By \eqref{eq:complement of H small} this must be (partially) contained in $H$,
a contradiction since $H$ contains no components of nonzero measure.
Thus for all $y\in I_H\cap B(x,R)\setminus N$, we have
\[
\limsup_{r\to 0}r\frac{P(H,B(y,r))}{\mu(B(y,r))}
\ge \frac{1}{80 \cdot 10^s C_P C_d^8 C_{\textrm{cap}}}.
\]
By a simple covering argument, it follows that
\[
\mu(I_H\cap B(x,R)\setminus N)\le \eps\cdot 80 \cdot 
10^s C_P C_d^{11} C_{\textrm{cap}} P(H,B(x,R))
\]
for every $\eps>0$. Thus $\mu(H\cap B(x,R)\setminus N)=0$
and so $\mu(H\cap B(x,R))=0$.
Since the space $X$ is geodesic, by \cite[Corollary 2.2]{Buc}
we know that $\mu(\{y\in X:\,d(y,x)=R\})=0$ and so
in fact $\mu(H)=0$.
\end{proof}

\begin{proof}[Proof of Proposition \ref{prop:connected components}]
	This follows from
Proposition \ref{prop:sum of perimeters of components} and
Lemma \ref{lem:H has measure zero}.
\end{proof}

\section{Functions of least gradient}\label{sec:least gradient}

In this section we consider functions of least gradient, or more precisely
superminimizers and solutions of obstacle problems
in the case $p=1$. We will follow the definitions and theory developed in \cite{L-WC}.
Throughout this section
the symbol $\Omega$ will always denote a nonempty open subset of $X$.
We denote by $\BV_c(\Om)$ the class of functions $\varphi\in\BV(\Om)$ with compact
support in $\Om$, that is, $\supp \varphi\Subset \Om$.

\begin{definition}
We say that $u\in\BV_{\loc}(\Om)$ is a $1$-minimizer  in $\Om$ (often
called function of least gradient) if
for all $\varphi\in \BV_c(\Om)$, we have
\begin{equation}\label{eq:definition of 1minimizer}
\Vert Du\Vert(\supp\varphi)\le \Vert D(u+\varphi)\Vert(\supp\varphi).
\end{equation}
We say that $u\in\BV_{\loc}(\Om)$ is a $1$-superminimizer in $\Om$
if \eqref{eq:definition of 1minimizer} holds for all nonnegative $\varphi\in \BV_c(\Om)$.
We say that $u\in\BV_{\loc}(\Om)$ is a $1$-subminimizer in $\Om$ if
\eqref{eq:definition of 1minimizer} holds for all nonpositive $\varphi\in \BV_c(\Om)$,
or equivalently if $-u$ is a $1$-superminimizer in $\Om$.
\end{definition}

Equivalently, we can replace $\supp\varphi$ by any set $A\Subset \Om$ containing $\supp\varphi$
in the above definitions.

If $\Om$ is bounded, and $\psi\colon\Om\to\overline{\R}$
and $f\in L^1_{\loc}(X)$
with $\Vert Df\Vert(X)<\infty$, we define the class of admissible functions
\[
\mathcal K_{\psi,f}(\Om):=\{u\in\BV_{\loc}(X):\,u\ge \psi\textrm{ in }\Om\textrm{ and }u=f\textrm{ in }X\setminus\Om\}.
\]
The (in)equalities above are understood in the a.e. sense.
For brevity, we sometimes write $\mathcal K_{\psi,f}$ instead of $\mathcal K_{\psi,f}(\Om)$.
By using a cutoff function,
it is easy to show that $\Vert Du\Vert(X)<\infty$
for every $u\in\mathcal K_{\psi,f}(\Om)$.

\begin{definition}
We say that $u\in\mathcal K_{\psi,f}(\Om)$ is a solution of the $\mathcal K_{\psi,f}$-obstacle problem
if $\Vert Du\Vert(X)\le \Vert Dv\Vert(X)$ for all $v\in\mathcal K_{\psi,f}(\Om)$.
\end{definition}

Whenever the characteristic function of a set $E$
is a solution of an obstacle problem,
for simplicity we will call $E$ a solution as well.
Similarly, if $\psi=\ch_A$ for some $A\subset X$, we let
$\mathcal K_{A, f}:=\mathcal K_{\psi, f}$.

Now we list some properties of superminimizers
and solutions of obstacle problems derived mostly in \cite{L-WC}.

\begin{lemma}[{\cite[Lemma 3.6]{L-WC}}]\label{lem:solutions from capacity}
	If $x\in X$, $0<r<R<\frac 18 \diam X$, and $A\subset B(x,r)$, then there exists
	$E\subset X$ that is a solution of the $\mathcal K_{A,0}(B(x,R))$-obstacle problem
	with
	\[
	P(E,X)\le \rcapa_1(A,B(x,R)).
	\]
\end{lemma}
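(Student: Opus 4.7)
The plan is to produce $E$ in two stages: first show that the infimum of $P(F,X)$ over admissible \emph{sets} $F$ equals the infimum of $\Vert Dv\Vert(X)$ over admissible BV functions (and is bounded by $\rcapa_1(A,B(x,R))$ via a coarea slicing of the $N^{1,1}$ competitors), and then realize this common infimum by a BV-compactness argument.

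First I would take a minimizing sequence $\{u_i\}\subset N^{1,1}(X)$ for the variational capacity $\rcapa_1(A,B(x,R))$, with $0\le u_i\le 1$, $u_i=0$ on $X\setminus B(x,R)$ and $u_i\ge 1$ on $A$, satisfying $\int_X g_{u_i}\,d\mu \to \rcapa_1(A,B(x,R))$. The coarea formula \eqref{eq:coarea} gives
\[
\int_X g_{u_i}\,d\mu \;\ge\; \Vert Du_i\Vert(X) \;=\; \int_0^1 P(\{u_i>t\},X)\,dt,
\]
so for each $i$ there exists $t_i\in(0,1)$ with $P(\{u_i>t_i\},X)\le \int_X g_{u_i}\,d\mu$. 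The set $F_i:=\{u_i>t_i\}$ satisfies $A\subset F_i\subset B(x,R)$ (up to a null set, which can be absorbed by redefining $F_i$ on a null set) and $\limsup_i P(F_i,X)\le \rcapa_1(A,B(x,R))$.

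Next I would apply the standard BV compactness theorem in the bounded set $B(x,R)$ (recall $R<\tfrac{1}{8}\diam X$, so $B(x,R)$ has finite measure and admits a Rellich-type compact embedding of BV into $L^1$) to pass, along a subsequence, to a set $E$ with $\ch_{F_i}\to \ch_E$ in $L^1(X)$. Since $\ch_{F_i}\ge \ch_A$ and $\ch_{F_i}=0$ outside $B(x,R)$, the limit inherits $A\subset E\subset B(x,R)$ almost everywhere, and by lower semicontinuity of the total variation with respect to $L^1_{\loc}$-convergence,
\[
P(E,X) \;\le\; \liminf_{i\to\infty} P(F_i,X) \;\le\; \rcapa_1(A,B(x,R)).
\]
Hence $\ch_E$ lies in $\mathcal K_{A,0}(B(x,R))$ and already satisfies the perimeter bound claimed in the lemma.

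Finally, I would verify that $E$ is a solution of the obstacle problem. For any competitor $v\in \mathcal K_{A,0}(B(x,R))$, truncation to $\min\{\max\{v,0\},1\}$ preserves admissibility and does not increase the total variation, so I may assume $0\le v\le 1$. By coarea,
\[
\Vert Dv\Vert(X) \;=\; \int_0^1 P(\{v>t\},X)\,dt,
\]
and for a.e.\ $t\in(0,1)$ the level set $\{v>t\}$ is itself admissible (it contains $A$ and is contained in $B(x,R)$). Running the same coarea/compactness argument with a minimizing sequence of admissible sets shows that the infimum of $P(F,X)$ over admissible $F$ is attained, equals $P(E,X)$, and bounds $\Vert Dv\Vert(X)$ from below; hence $\Vert D\ch_E\Vert(X)=P(E,X)\le \Vert Dv\Vert(X)$ for every competitor $v$, so $E$ is a solution.

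The main technical point is ensuring that the $L^1$-limit $E$ retains the pointwise (a.e.) constraints $A\subset E\subset B(x,R)$; this is the standard but slightly delicate step, and is handled by noting that almost-everywhere inequalities pass through $L^1$ limits of characteristic functions.
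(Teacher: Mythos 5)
Your plan --- coarea slicing of $N^{1,1}$ competitors, the direct method with BV compactness, and coarea on BV competitors --- contains all the right ingredients, but the execution has a genuine logical gap. The set $E$ you construct from the capacity-minimizing sequence (via the level sets $F_i=\{u_i>t_i\}$ and BV compactness) satisfies $P(E,X)\le\rcapa_1(A,B(x,R))$ and is admissible, but you never show that this particular $E$ minimizes perimeter among admissible sets. Writing $m:=\inf\{\Vert Dv\Vert(X):v\in\mathcal K_{A,0}(B(x,R))\}$, your construction only yields $m\le P(E,X)\le\rcapa_1(A,B(x,R))$, and the first inequality may well be strict: the level sets of a capacity-minimizing sequence need not be perimeter-minimizing. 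Your final sentence asserts that ``the infimum of $P(F,X)$ over admissible $F$ \dots\ equals $P(E,X)$,'' but that equality holds for the limit of a perimeter-minimizing sequence of admissible sets, which is in general a different set from the $E$ defined two paragraphs earlier; the claim is therefore not justified as written.

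The fix is to decouple the two constructions, which actually matches your opening plan more closely than your execution does. Use the coarea slicing of the capacity minimizers only to establish $m\le\rcapa_1(A,B(x,R))$: each $F_i=\{u_i>t_i\}$ is admissible with $P(F_i,X)\le\int_X g_{u_i}\,d\mu$, so the infimum of perimeter over admissible sets --- which by coarea and truncation equals $m$ --- is at most $\rcapa_1(A,B(x,R))$. Then, separately, take admissible sets $G_i$ with $P(G_i,X)\to m$, extract an $L^1$-limit $E$ via BV compactness in the bounded ball $B(x,R)$, and conclude $P(E,X)\le m$ by lower semicontinuity while $P(E,X)\ge m$ by admissibility; hence $P(E,X)=m\le\rcapa_1(A,B(x,R))$. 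Your coarea argument on truncated competitors then shows $\Vert Dv\Vert(X)\ge m=P(E,X)$ for every $v\in\mathcal K_{A,0}(B(x,R))$, so this $E$ is a solution. (The paper itself cites this lemma from \cite{L-WC} without proof, so I cannot compare against the original argument, but the above is the standard direct-method route and is almost certainly what is intended there.)
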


\begin{proposition}[{\cite[Proposition 3.7]{L-WC}}]\label{prop:solutions are superminimizers}
If $u\in\mathcal K_{\psi,f}(\Om)$ is a solution
of the $\mathcal K_{\psi,f}$-obstacle problem, then $u$
is a $1$-superminimizer in $\Om$.
\end{proposition}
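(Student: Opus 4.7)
The plan is to deduce the superminimizer inequality directly from the obstacle-problem minimality by a standard competitor argument combined with the locality of the total variation measure on open sets.

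First, I would fix an arbitrary nonnegative $\varphi\in\BV_c(\Om)$ and check that $u+\varphi$ lies in the admissible class $\mathcal K_{\psi,f}(\Om)$. Since $\supp\varphi\Subset\Om$, we have $\varphi=0$ in $X\setminus\Om$, so $u+\varphi=u=f$ in $X\setminus\Om$. Inside $\Om$, the hypothesis $\varphi\ge 0$ gives $u+\varphi\ge u\ge \psi$ a.e. Both functions are in $\BV_{\loc}(X)$ because $\varphi\in\BV(\Om)$ extends by zero to a $\BV$ function on $X$ (using $\supp\varphi\Subset\Om$). Thus $u+\varphi\in\mathcal K_{\psi,f}(\Om)$.

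Next, I would invoke the minimality property: since $u$ solves the obstacle problem, $\Vert Du\Vert(X)\le \Vert D(u+\varphi)\Vert(X)$. The total variation is a finite Borel measure on $X$ (as $\Vert Du\Vert(X)<\infty$ for elements of the admissible class, as remarked before the definition of the obstacle problem), so I can split
\[
\Vert Du\Vert(\supp\varphi)+\Vert Du\Vert(X\setminus\supp\varphi)\le \Vert D(u+\varphi)\Vert(\supp\varphi)+\Vert D(u+\varphi)\Vert(X\setminus\supp\varphi),
\]
using that $\supp\varphi$ is closed and $X\setminus\supp\varphi$ is open, and that the two form a Borel partition of $X$.

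The key remaining step is the locality of the variation measure on the open set $U:=X\setminus\supp\varphi$: since $u=u+\varphi$ pointwise on $U$, the defining approximation-based formula for $\Vert\cdot\Vert$ on open subsets of $U$ yields $\Vert Du\Vert(U)=\Vert D(u+\varphi)\Vert(U)$. (This is the standard fact that $\Vert D w\Vert(W)$ for $W$ open depends only on the restriction $w|_W$, which is immediate from the definition via approximating sequences in $N^{1,1}_{\loc}(W)$.) Subtracting this equal quantity from both sides of the inequality above yields $\Vert Du\Vert(\supp\varphi)\le \Vert D(u+\varphi)\Vert(\supp\varphi)$, which is precisely \eqref{eq:definition of 1minimizer}. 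Since $\varphi\ge 0$ in $\BV_c(\Om)$ was arbitrary, $u$ is a $1$-superminimizer in $\Om$. The only delicate point to verify carefully is the locality on $U$, but this is essentially immediate from the definition of the total variation as an infimum over approximating sequences in $L^1_{\loc}$, so no real obstacle arises.
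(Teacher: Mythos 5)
The paper does not prove this proposition itself; it simply cites \cite[Proposition 3.7]{L-WC}. Your argument is correct and is the standard competitor argument one would expect to find in that reference: verify that $u+\varphi$ is admissible (which uses $\varphi\ge 0$ and the extension of $\varphi$ by zero to a $\BV(X)$ function via $\supp\varphi\Subset\Om$), invoke obstacle-problem minimality over all of $X$, decompose $X$ into the closed set $\supp\varphi$ and its open complement using that both $\|Du\|$ and $\|D(u+\varphi)\|$ are finite Borel regular outer measures (finiteness is the remark preceding the obstacle-problem definition), and cancel the equal contributions over $X\setminus\supp\varphi$ by locality of the total variation on open sets, which is indeed immediate from its definition via $L^1_{\loc}$-approximation. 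The only step left slightly implicit is that extension by zero preserves finite total variation, which one can justify with a Lipschitz cutoff and the Leibniz rule, but this is routine.
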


The following fact and its proof are similar to \cite[Lemma 3.2]{KKLS}.

\begin{lemma}\label{lem:subminimizer char}
Let $F\subset X$ with
$P(F,\Om)<\infty$ and suppose that for every $H\Subset \Om$, we have
\[
P(F,\Om)\le P(F\setminus H,\Om).
\]
Then $\ch_F$ is a $1$-subminimizer in $\Om$.
\end{lemma}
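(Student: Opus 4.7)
The idea is to truncate $\ch_F+\varphi$ at $0$ from below and apply the coarea formula: the resulting nontrivial superlevel sets turn out to be precisely of the form $F\setminus H_t$ with $H_t\Subset\Om$, so the hypothesis applies to every one of them.

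Fix an arbitrary nonpositive $\varphi\in\BV_c(\Om)$; the goal is $\Vert D\ch_F\Vert(\supp\varphi)\le \Vert D(\ch_F+\varphi)\Vert(\supp\varphi)$. First I would observe that since $\ch_F=\ch_F+\varphi$ a.e.\ on the open set $\Om\setminus\supp\varphi$, the measures $\Vert D\ch_F\Vert$ and $\Vert D(\ch_F+\varphi)\Vert$ coincide there, so the target inequality is equivalent to the global version
\[
P(F,\Om)\le \Vert D(\ch_F+\varphi)\Vert(\Om).
\]

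Now set $w:=\ch_F+\varphi$ and $v:=\max\{w,0\}$; since $\varphi\le 0$, one has $v\in[0,1]$. For each $t\in(0,1)$ a direct case analysis on whether $x\in F$ gives
\[
\{v>t\}=F\setminus H_t,\qquad H_t:=\{\varphi\le t-1\}.
\]
As $t-1<0$ and $\varphi$ vanishes outside the compact set $\supp\varphi$, one has $H_t\subset\supp\varphi\Subset\Om$, so the hypothesis yields $P(F,\Om)\le P(F\setminus H_t,\Om)=P(\{v>t\},\Om)$ for every $t\in(0,1)$. Integrating this in $t$ and applying the coarea formula \eqref{eq:coarea} twice, one obtains
\[
P(F,\Om)\le \int_0^1 P(\{v>t\},\Om)\,dt=\Vert Dv\Vert(\Om)\le \Vert Dw\Vert(\Om),
\]
where the middle equality uses $\{v>t\}=\emptyset$ for $t\ge 1$ and $\{v>t\}=X$ (so $P(\{v>t\},\Om)=0$) for $t<0$, and the last inequality uses $\{v>t\}=\{w>t\}$ for all $t\ge 0$. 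Combined with the reduction above, this is the desired subminimizer inequality; the only subtle point, making the hypothesis applicable, is the containment $H_t\subset\supp\varphi$, and no deeper obstacle arises.
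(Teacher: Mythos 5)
Your proof is correct and follows essentially the same route as the paper: apply the coarea formula, observe that the superlevel sets $\{ \ch_F+\varphi>t\}$ for $t\in(0,1)$ are precisely $F\setminus H_t$ with $H_t=\{\varphi\le t-1\}\Subset\Om$, and then integrate the hypothesis. The only cosmetic differences are that you work on all of $\Om$ (justifying the reduction from $\supp\varphi$ explicitly via locality of the perimeter measure) and truncate at $0$ to get a clean identity in the coarea formula, whereas the paper works directly on $\supp\varphi$ with a one-sided estimate; both handle the same technical points.
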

\begin{proof}
Take a nonnegative $\varphi\in\BV_c(\Om)$.
Observe that for every $0<s<1$, we have $\supp\{\varphi\ge s\}\Subset \Om$.
Thus by the coarea formula \eqref{eq:coarea},
\begin{align*}
\Vert D(\ch_F-\varphi)\Vert(\supp\varphi)
&\ge\int_0^1 P(\{\ch_F-\varphi>t\},\supp\varphi)\,dt\\
&=\int_0^1 P(F\setminus \{\varphi\ge 1-t\},\supp\varphi)\,dt\\
&\ge\int_0^1 P(F,\supp\varphi)\,dt=\Vert D\ch_F\Vert(\supp\varphi).
\end{align*}
\end{proof}

\begin{proposition}\label{prop:components are subminimizers}
Let $B(x,R)$ be a ball and let $F\subset X$ be a closed set with $P(F,X)<\infty$
and such that $\ch_F$ is a $1$-subminimizer in $B(x,R)$.
Denote the components of $F\cap \overline{B}(x,R)$
with nonzero $\mu$-measure by $F_1,F_2,\ldots$.
Then each $\ch_{F_k}$ is a $1$-subminimizer in $B(x,R)$.
\end{proposition}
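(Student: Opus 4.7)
The plan is to verify the hypothesis of Lemma \ref{lem:subminimizer char} for each $F_k$: namely that $P(F_k, B(x,R))\le P(F_k\setminus H, B(x,R))$ for every $H\Subset B(x,R)$, whence Lemma \ref{lem:subminimizer char} yields that $\ch_{F_k}$ is a $1$-subminimizer. So I fix such an $H$; the inequality is trivial when $P(F_k\setminus H, B(x,R))=\infty$, hence I may assume this perimeter is finite.

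The key idea is to compare $F$ with the modified set $F':=F\setminus (F_k\cap H)$, which differs from $F$ only on a precompact subset of the single component $F_k$. By Lemma \ref{lem:H has measure zero} we have $F\cap B(x,R)=\bigcup_j F_j\cap B(x,R)$ modulo measure zero, so inside $B(x,R)$ the set $F'$ agrees (modulo measure zero) with $(F_k\setminus H)\cup\bigcup_{j\neq k}F_j$. Since each $F_j$ has finite perimeter in $B(x,R)$ by Proposition \ref{prop:connected components} and $P(F_k\setminus H, B(x,R))<\infty$ by assumption, applying the additivity statement of Proposition \ref{prop:connected components} twice — once to the family $\{F_k\setminus H\}\cup\{F_j\}_{j\neq k}$ and once to the family $\{F_j\}_j$ itself — I obtain
\[
P(F', B(x,R))=P(F_k\setminus H, B(x,R))+\sum_{j\neq k}P(F_j, B(x,R))
\]
and
\[
P(F, B(x,R))=P(F_k, B(x,R))+\sum_{j\neq k}P(F_j, B(x,R)),
\]
so subtracting gives the crucial identity
\[
P(F, B(x,R))-P(F', B(x,R))=P(F_k, B(x,R))-P(F_k\setminus H, B(x,R)).
\]

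It therefore suffices to show $P(F, B(x,R))\le P(F', B(x,R))$, and this is exactly what the subminimizer property of $\ch_F$ in $B(x,R)$ is designed to produce, since $F'\subset F$ and $F\setminus F'=F_k\cap H\Subset B(x,R)$. The idealized test function in the definition of a $1$-subminimizer would be $\varphi:=-\ch_{F_k\cap H}$: then $\ch_F+\varphi=\ch_{F'}$, while $\ch_F=\ch_{F'}$ off $\supp\varphi\subset\overline{F_k\cap H}\Subset B(x,R)$, so splitting the Borel variation measures into the disjoint pieces $\supp\varphi$ and $B(x,R)\setminus\supp\varphi$ would promote the local subminimizer inequality on $\supp\varphi$ to one on all of $B(x,R)$.

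The main obstacle is that $F_k\cap H$ need not itself be of finite perimeter, so $\ch_{F_k\cap H}$ may fail to lie in $\BV_c(B(x,R))$ and cannot be used directly as $\varphi$. The remedy is standard: enlarge $H$ slightly to an open set still compactly contained in $B(x,R)$, approximate $-\ch_{F_k\cap H}$ by nonpositive Lipschitz cutoffs supported in a shrinking neighborhood of $\overline{F_k\cap H}$, apply the subminimizer definition to each, and pass to the limit using the coarea formula \eqref{eq:coarea} together with the lower semicontinuity of the total variation. Once this monotonicity step $P(F, B(x,R))\le P(F', B(x,R))$ is in hand, combining it with the displayed identity yields $P(F_k, B(x,R))\le P(F_k\setminus H, B(x,R))$, and Lemma \ref{lem:subminimizer char} concludes the proof.
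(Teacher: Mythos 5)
Your core argument is essentially the paper's: reduce via the additivity statement of Proposition \ref{prop:connected components} to showing $P(F,B(x,R))\le P(F',B(x,R))$ with $F':=F\setminus(F_k\cap H)$, extract this from the subminimizer property of $\ch_F$ tested against $\varphi=-\ch_{F_k\cap H}$, cancel the finite sum $\sum_{j\neq k}P(F_j,B(x,R))$, and invoke Lemma \ref{lem:subminimizer char}. That is the same skeleton, and the algebraic identity you derive is correct.

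However, the ``main obstacle'' you flag is not actually there, and the remedy you sketch for it would not close as stated. Once you have (legitimately) assumed $P(F_k\setminus H,B(x,R))<\infty$, and since $P(F_k,B(x,R))<\infty$ by Proposition \ref{prop:connected components}, the lattice property \eqref{eq:lattice property of sets of finite perimeter} applied to $E_1=F_k$ and $E_2=X\setminus(F_k\setminus H)$ gives $P(F_k\cap H,B(x,R))<\infty$ immediately (note $E_1\cap E_2=F_k\cap H$ and $E_1\cup E_2=X$). Together with $\overline{F_k\cap H}\subset\overline{H}\Subset B(x,R)$, this already puts $\varphi=-\ch_{F_k\cap H}$ in $\BV_c(B(x,R))$, so the subminimizer inequality applies directly; combined with locality of the perimeter on the open set $B(x,R)\setminus\supp\varphi$ (where $F$ and $F'$ coincide), you get $P(F,B(x,R))\le P(F',B(x,R))$ with no approximation at all. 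By contrast, the approximation scheme you propose faces a real directional problem: the subminimizer inequality yields $\Vert D\ch_F\Vert(\supp\varphi_i)\le\Vert D(\ch_F+\varphi_i)\Vert(\supp\varphi_i)$, and to conclude you would need an \emph{upper} bound on the limit of the right-hand side by $\Vert D\ch_{F'}\Vert$; lower semicontinuity of the total variation gives only the reverse inequality and does not help. So keep your overall structure, but delete the detour: the finiteness of $P(F_k\cap H,B(x,R))$ is automatic, and the test function is already admissible. (This is precisely how the paper's own proof proceeds, phrased there with the harmless normalization $H\subset F_k$.)
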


\begin{proof}
Fix $k\in\N$ and take $H\Subset B(x,R)$.
We can assume that $H\subset F_k$ and that $P(F_k\setminus H,B(x,R))<\infty$.
Now
\begin{align*}
\sum_{\substack{j\in\N\\ j\neq k}}P(F_j,B(x,R))+P(F_k,B(x,R))
&=\sum_{j=1}^{\infty}P(F_j,B(x,R))\\
&= P(F,B(x,R))\quad\textrm{by Proposition }\ref{prop:connected components}\\
&\le P(F\setminus H,B(x,R))\\
&= \sum_{j=1}^{\infty}P(F_j\setminus H,B(x,R))\quad\textrm{by Proposition }\ref{prop:connected components}\\
&=\sum_{\substack{j\in\N\\ j\neq k}}P(F_j,B(x,R))+P(F_k\setminus H,B(x,R)).
\end{align*}
Note that since $\sum_{j=1}^{\infty}P(F_j,B(x,R))= P(F,B(x,R))<\infty$, we now get
\[
P(F_k,B(x,R))\le P(F_k\setminus H,B(x,R)).
\]
By Lemma \ref{lem:subminimizer char}, $\ch_{F_k}$ is a $1$-subminimizer in $B(x,R)$.
\end{proof}

We have the following weak Harnack inequality. We denote the positive
part of a function by $u_+:=\max\{u,0\}$.

\begin{theorem}[{\cite[Theorem 3.10]{L-WC}}]\label{thm:weak Harnack}
Suppose $k\in\R$ and $0<R<\tfrac 14 \diam X$ with $B(x,R)\Subset \Om$, and
assume either that
\begin{enumerate}[{(a)}]
\item $u$ is a $1$-subminimizer in $\Om$, or
\item $\Om$ is bounded, $u$ is a solution of the
$\mathcal K_{\psi, f}(\Om)$-obstacle problem,
and $\psi\le k$ a.e. in $B(x,R)$.
\end{enumerate}
Then for any $0<r<R$ and some constant $C_1=C_1(C_d,C_P)$,
\[
\esssup_{B(x,r)}u\le C_1\left(\frac{R}{R-r}\right)^{s}\vint{B(x,R)}(u-k)_+\,d\mu+k.
\]
\end{theorem}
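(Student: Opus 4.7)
The plan is to carry out a De~Giorgi iteration on the superlevel sets of $u$, adapted to the $\BV$/least-gradient setting by replacing $p$-Laplace integration by parts with the coarea formula \eqref{eq:coarea} and $L^p$-Sobolev embedding with the relative isoperimetric inequality \eqref{eq:relative isoperimetric inequality}.

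First I would unify cases (a) and (b) by showing that in both situations $u$ satisfies the subminimizer inequality $\|Du\|(\supp\varphi)\le\|D(u+\varphi)\|(\supp\varphi)$ for every nonpositive $\varphi\in\BV_c(B(x,R))$ with $u+\varphi\ge k$ on $\supp\varphi$. In case (a) this is the subminimizer property itself. In case (b), such a $\varphi$ provides an admissible competitor for the $\mathcal K_{\psi,f}$-obstacle problem, since $\psi\le k$ a.e.\ in $B(x,R)$ forces $u+\varphi\ge k\ge\psi$ on $\supp\varphi$, while $u+\varphi=u=f$ outside $B(x,R)$.

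Next is the Caccioppoli step. Fix $k\le l<t$ and $0<r'<R'<R$, and take a Lipschitz cutoff $\eta$ with $\eta\equiv 1$ on $B(x,r')$, $\supp\eta\subset B(x,R')$, and Lipschitz constant at most $2/(R'-r')$. The test function
\[
\varphi := -\eta\,\min\bigl\{(u-l)_+,\,t-l\bigr\}
\]
is admissible in the sense of the previous paragraph: $\varphi\le 0$, compactly supported in $B(x,R')$, and $u+\varphi\ge l\ge k$ on $\supp\varphi\subset\{u>l\}$. Inserting this $\varphi$ into the subminimizer inequality and expanding $\|D(u+\varphi)\|(B(x,R'))$ by the coarea formula and a $\BV$ Leibniz rule---so that the $\eta$-dependent part cancels on $\{u\le l\}$ and on $\{u\ge t\}$, leaving a new contribution controlled by $g_\eta\,\min\{(u-l)_+,t-l\}$---one extracts, after a further averaging of $l$ on a small dyadic interval, an estimate of the type
\[
(t-l)^{2}\,P(\{u>t\},B(x,r')) \le \frac{C_0}{R'-r'}\int_{B(x,R')\cap\{u>l\}}(u-l)\,d\mu
\]
with $C_0=C_0(C_d,C_P)$.

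Finally, iterate. With a constant $C_1=C_1(C_d,C_P)$ to be chosen, set
\[
M:=C_1\Bigl(\tfrac{R}{R-r}\Bigr)^{s}\vint{B(x,R)}(u-k)_+\,d\mu,\quad r_j:=r+2^{-j}(R-r),\quad l_j:=k+(1-2^{-j-1})M,
\]
and $A_j:=\mu(\{u>l_j\}\cap B(x,r_j))$. Applying the Caccioppoli with $l=l_j$, $t=l_{j+1}$, $R'=r_j$, $r'=r_{j+1}$, combining with the relative isoperimetric inequality \eqref{eq:relative isoperimetric inequality} on $\{u>l_{j+1}\}\cap B(x,r_{j+1})$ to upgrade perimeter to measure with a gain of $1/s$, and controlling the right-hand side by Cavalieri (together with the monotonicity $A_{j+1}\le A_j$), one obtains a recursion
\[
A_{j+1}\le\frac{C\,4^{\alpha j}}{M\,(R-r)\,\mu(B(x,R))^{1/s}}\,A_j^{1+1/s}
\]
with $C,\alpha$ depending only on $C_d,C_P,s$; the factor $\mu(B(x,R))^{-1/s}$ and the exponent $s$ in the statement trace back to \eqref{eq:homogenous dimension}. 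The standard geometric-iteration lemma then forces $A_j\to 0$ provided $A_0\mu(B(x,R))^{-1}$ is below a threshold determined by $C,\alpha$; taking $C_1$ large enough makes this automatic via the definition of $M$ and the trivial bound $A_0\le\mu(B(x,R))$. Hence $u\le k+M$ a.e.\ on $B(x,r)$, which is the required estimate.

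The main obstacle is the $\BV$-Caccioppoli: with no integration by parts available, one must expand $\|D(u+\varphi)\|$ through coarea and genuinely extract the cancellation provided by the truncation $\min\{(u-l)_+,t-l\}$, rather than settling for the trivial triangle inequality $\|Du\|\le\|Du\|+\|D(\eta w)\|$. Once this is set up correctly---which is the main content of the cited proof in \cite{L-WC}---the rest of the argument is a routine De~Giorgi iteration.
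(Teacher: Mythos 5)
Your overall framework --- unify cases (a) and (b) into a common subminimizer inequality, derive a Caccioppoli estimate by testing with a truncation $\varphi=-\eta\min\{(u-l)_+,t-l\}$ and expanding $\|D(u+\varphi)\|$ via coarea, then run a De~Giorgi iteration --- is the right one and matches the structure of the proof in \cite{L-WC}. The unification step is correct: a nonpositive $\varphi\in\BV_c(B(x,R))$ with $u+\varphi\ge k$ on $\supp\varphi$ is admissible for the obstacle problem since $\psi\le k$ in $B(x,R)$, so a solution is in particular a $1$-subminimizer above the obstacle level.

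However, there is a genuine gap in the iteration step. You attribute the ``gain of $1/s$'' in the recursion $A_{j+1}\lesssim 4^{\alpha j}A_j^{1+1/s}$ to the relative isoperimetric inequality \eqref{eq:relative isoperimetric inequality}. That inequality reads $\min\{\mu(B\cap E),\mu(B\setminus E)\}\le 2C_P r\,P(E,B)$; for a characteristic function it converts perimeter into measure \emph{linearly}, with no improvement in the exponent, and by itself cannot produce the superlinear recursion needed to force $A_j\to0$. The gain must instead come from a Sobolev embedding with exponent $s/(s-1)>1$: one needs an $(s/(s-1),1)$-Sobolev inequality with zero boundary values to pass from $\|D(u-l_j)_+\|$ to an $L^{s/(s-1)}$ norm, and then H\"older/Chebyshev to produce the extra factor $A_j^{1/s}$. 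The paper itself signals this: the remark \eqref{eq:C1} giving $C_1=2^{(s+1)^2}(6\widetilde{C}_S C_d)^s$ explicitly identifies $\widetilde{C}_S$ as ``the constant from an $(s/(s-1),1)$-Sobolev inequality with zero boundary values.'' Tracing the factor $\mu(B(x,R))^{-1/s}$ and the exponent $s$ ``back to \eqref{eq:homogenous dimension}'' is not a substitute; \eqref{eq:homogenous dimension} is a lower volume bound, and does not furnish the embedding. You should replace the invocation of \eqref{eq:relative isoperimetric inequality} with the $(s/(s-1),1)$-Sobolev inequality (a consequence of the $(1,1)$-Poincar\'e inequality with doubling, cf.\ the self-improvement discussed around \eqref{eq:sobolev inequality} and in \cite{BB}), and make the Chebyshev/H\"older bookkeeping explicit; once that is done the geometric iteration lemma applies as you describe. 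A secondary (presentational) issue: the factor $(t-l)^2$ in your Caccioppoli estimate is nonstandard for $p=1$ and the claimed ``cancellation on $\{u\ge t\}$'' is not quite right, since $\varphi=-\eta(t-l)$ still depends on $\eta$ there; the averaging-in-$l$ device can rescue a clean $(t-l)\,P(\{u>t\})$ bound, but this needs to be spelled out carefully.
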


For later reference, let us note that a close look at the proof
of the above theorem reveals that we can take
\begin{equation}\label{eq:C1}
C_1=2^{(s+1)^2}(6\widetilde{C}_S C_d)^s,
\end{equation}
where $\widetilde{C}$ is the constant from an $(s/(s-1),1)$-Sobolev inequality
with zero boundary values.

\begin{corollary}\label{cor:weak Harnack}
Suppose $k\in\R$, $x\in X$, $0<R<\tfrac 14 \diam X$, and
assume that $\ch_F$ is a $1$-subminimizer in $B(x,R)$ with $\mu(F\cap B(x,R/2))>0$.
Then
\[
\frac{\mu(B(x,R)\cap F)}{\mu(B(x,R))}\ge (2^s C_1)^{-1}.
\]
\end{corollary}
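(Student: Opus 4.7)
The plan is to apply Theorem \ref{thm:weak Harnack}(a) directly with $u=\ch_F$, $k=0$, and $\Omega=B(x,R)$. With these choices, the right-hand side of the weak Harnack inequality becomes
\[
C_1\left(\frac{R'}{R'-r}\right)^{s}\vint{B(x,R')}\ch_F\,d\mu,
\]
so taking $r=R/2$ and $R'\nearrow R$ forces the factor $\bigl(R'/(R'-R/2)\bigr)^{s}$ to converge to $2^{s}$. On the left-hand side, since $\mu(F\cap B(x,R/2))>0$, we have $\esssup_{B(x,R/2)}\ch_F=1$.

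The only technical wrinkle is that Theorem \ref{thm:weak Harnack} requires $B(x,R')\Subset\Omega$, and here $\Omega=B(x,R)$ itself is not compactly contained in itself. Thus I would first apply the theorem on slightly smaller concentric balls $B(x,R')$ with $R/2<R'<R$ (so that $\overline{B}(x,R')\subset B(x,R)$, which uses properness of the space), obtaining
\[
\esssup_{B(x,R/2)}\ch_F \le C_1\left(\frac{R'}{R'-R/2}\right)^{s}\frac{\mu(B(x,R')\cap F)}{\mu(B(x,R'))},
\]
and then let $R'\nearrow R$. To pass to the limit in the averaged integral on the right one needs the measures $\mu(B(x,R'))$ and $\mu(B(x,R')\cap F)$ to be continuous at $R$; this is where the geodesic hypothesis on $X$ enters, via \cite[Corollary 2.2]{Buc}, which guarantees $\mu(\{y\in X:d(y,x)=R\})=0$. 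Together with the monotone convergence $B(x,R')\nearrow B(x,R)$ as $R'\nearrow R$, this yields the desired limiting identities.

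Combining the two observations gives
\[
1 \le 2^{s} C_1\,\frac{\mu(B(x,R)\cap F)}{\mu(B(x,R))},
\]
which is exactly the claimed bound after dividing through. I do not anticipate any serious obstacle: the argument is a one-line application of the weak Harnack inequality, with the only care needed in the limiting step to accommodate the $\Subset$ condition. The remark about the explicit form of $C_1$ in \eqref{eq:C1} plays no role here.
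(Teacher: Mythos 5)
Your proof is correct and follows essentially the same route as the paper, which applies Theorem~\ref{thm:weak Harnack}(a) with $u=\ch_F$, $k=0$, $r=R/2$ on the slightly smaller ball $B(x,R-\eps)\Subset B(x,R)$ and then lets $\eps\to 0$. One minor simplification: the appeal to \cite[Corollary 2.2]{Buc} is unnecessary here, since $\mu(B(x,R'))\to\mu(B(x,R))$ and $\mu(B(x,R')\cap F)\to\mu(B(x,R)\cap F)$ as $R'\nearrow R$ already follow from continuity of measure from below applied to the increasing family of \emph{open} balls $B(x,R')$ whose union is the open ball $B(x,R)$.
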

\begin{proof}
Let $0<\eps<R/2$.
Applying Theorem \ref{thm:weak Harnack}(i) with $\Om=B(x,R)$,
$u=\ch_F$, $k=0$, and $R/2,\, R-\eps$ in place of $r,\, R$, we get
\[
1\le C_1\left(\frac{R-\eps}{R-\eps-R/2}\right)^{s}\frac{\mu(B(x,R-\eps)\cap F)}{\mu(B(x,R-\eps))}.
\]
Letting $\eps\to 0$, we get the result.
\end{proof}

Recall the definitions of the lower and upper approximate limits
$u^{\wedge}$ and $u^{\vee}$ from \eqref{eq:lower approximate limit}
and \eqref{eq:upper approximate limit}.

\begin{theorem}[{\cite[Theorem 3.11]{L-WC}}]\label{thm:superminimizers are lsc}
Let $u$ be a $1$-superminimizer in $\Om$. Then $u^{\wedge}\colon\Om\to (-\infty,\infty]$
is lower semicontinuous.
\end{theorem}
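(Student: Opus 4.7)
The plan is to reduce the claim to an upper-semicontinuity statement for subminimizers and then apply the weak Harnack inequality from Theorem \ref{thm:weak Harnack}(a). Set $v:=-u$; by definition $v$ is a $1$-subminimizer in $\Om$, and since $(-u)^{\vee}=-u^{\wedge}$, showing that $u^{\wedge}$ is lower semicontinuous is equivalent to showing that $v^{\vee}\colon\Om\to [-\infty,\infty)$ is upper semicontinuous. Accordingly I fix $x_0\in\Om$ and $k\in\R$ with $k>v^{\vee}(x_0)$, and aim to produce a neighborhood of $x_0$ on which $v^{\vee}<k$.

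The first step is a preliminary boundedness reduction. Choose a ball $B(x_0,2R)\Subset\Om$ with $R<\tfrac{1}{8}\diam X$. Since $v\in\BV_{\loc}(\Om)\subset L^1_{\loc}(\Om)$, for any fixed $k_0\in\R$ the function $(v-k_0)_+$ lies in $L^1(B(x_0,2R))$, so Theorem \ref{thm:weak Harnack}(a) applied on $B(x_0,2R)$ with inner radius $R$ yields $\esssup_{B(x_0,R)}v\le M<\infty$ for some $M$. Next pick any $s$ with $v^{\vee}(x_0)<s<k$; by \eqref{eq:upper approximate limit} one has $\mu(B(x_0,r)\cap\{v>s\})/\mu(B(x_0,r))\to 0$ as $r\to 0$, and combined with the bound $v\le M$ a.e.\ on $B(x_0,R)$ this gives
\[
\vint{B(x_0,r)}(v-s)_+\,d\mu\le (M-s)\frac{\mu(B(x_0,r)\cap\{v>s\})}{\mu(B(x_0,r))}\longrightarrow 0\qquad\textrm{as }r\to 0.
\]

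The second step is a further application of the weak Harnack inequality to turn this decaying $L^1$-average into a uniform pointwise bound. For any $y\in B(x_0,r/2)$ with $r<R$ (so that $B(y,r/2)\subset B(x_0,R)\Subset\Om$), Theorem \ref{thm:weak Harnack}(a) applied on $B(y,r/2)$ with inner radius $r/4$ and level $s$ gives
\[
\esssup_{B(y,r/4)}v\le C_1\cdot 2^{s}\vint{B(y,r/2)}(v-s)_+\,d\mu+s\le C\vint{B(x_0,r)}(v-s)_+\,d\mu+s,
\]
where the last step uses $B(y,r/2)\subset B(x_0,r)$ and the doubling property. Choosing $r$ so small that the right-hand side is strictly less than $k$, I obtain $v^{\vee}(y)\le \esssup_{B(y,r/4)}v<k$ for every $y\in B(x_0,r/2)$, which is the desired neighborhood of $x_0$.

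The main obstacle I anticipate is the potential unboundedness of $v$ (and hence of $u$) near $x_0$: without a local upper bound on $v$, the vanishing density of $\{v>s\}$ at $x_0$ would not by itself control the $L^1$-average of $(v-s)_+$ that the weak Harnack estimate requires on its right-hand side. The preliminary boundedness step removes this obstruction essentially for free from Theorem \ref{thm:weak Harnack}(a) itself, and what remains is only a careful nesting of scales so that $B(y,r/4)\subset B(y,r/2)\subset B(x_0,r)\subset B(x_0,R)$, with the passage between averages on these balls governed purely by doubling.
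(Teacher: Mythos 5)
Your proposal is correct and follows what is essentially the standard (and, to my knowledge, the same) route as the cited paper [L-WC]: reduce to upper semicontinuity of $v^\vee$ for the subminimizer $v=-u$, apply Theorem \ref{thm:weak Harnack}(a) once to get a local essential upper bound (which also gives $v^\vee<\infty$, i.e.\ $u^\wedge>-\infty$), use the defining density condition of $v^\vee(x_0)$ to make $\vint{B(x_0,r)}(v-s)_+\,d\mu$ small, and then apply Theorem \ref{thm:weak Harnack}(a) a second time at nearby centers to propagate this into a uniform pointwise bound $v^\vee<k$ on a neighborhood. The only cosmetic slips are the reuse of the letter $s$ both for the chosen level and for the homogeneity exponent from \eqref{eq:homogenous dimension} (in ``$C_1\cdot 2^s$'' the exponent is of course the latter), and that the finiteness $v^\vee(x_0)<\infty$ (needed before choosing $k>v^\vee(x_0)$) should formally be noted as following from the very first weak-Harnack application; neither affects the correctness of the argument.
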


\begin{lemma}\label{lem:smallness in annuli}
Let $B=B(x,R)$ be a ball with $0<R<\frac{1}{32} \diam X$, and
suppose that $W\subset B$.
Let $V\subset 4B$ be a solution of the $\mathcal K_{W,0}(4B)$-obstacle problem
(as guaranteed by Lemma \ref{lem:solutions from capacity}).
Then for all
$y\in 3 B\setminus 2 B$,
\[
\ch_V^{\vee}(y)\le C_2 R \frac{\rcapa_1(W,4B)}{\mu(B)}
\]
for some constant $C_2=C_2(C_d,C_P)$.
\end{lemma}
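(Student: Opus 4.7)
The plan is to exploit the dichotomy $\ch_V^{\vee}(y)\in\{0,1\}$, so that it suffices to establish the inequality in the case $\ch_V^{\vee}(y)=1$; the goal then becomes to show $C_2 R\,\rcapa_1(W,4B)/\mu(B)\ge 1$. First I would record two global consequences of $V$ being the solution from Lemma \ref{lem:solutions from capacity}: on one hand, $P(V,X)\le \rcapa_1(W,4B)$ by that lemma, and on the other hand, since $V\subset 4B$ with $4R<\tfrac14\diam X$, the isoperimetric inequality \eqref{eq:isop inequality with zero boundary values} yields
\[
\mu(V)\le 4 C_S R\,\rcapa_1(W,4B).
\]

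The key intermediate assertion is that $\ch_V$ is a $1$-subminimizer in the ball $B(y,R)$. For $y\in 3B\setminus 2B$ the triangle inequality gives $B(y,R)\subset 4B\setminus B$, so $W\cap B(y,R)=\emptyset$ and the obstacle does not bind there. For any $H\Subset B(y,R)$ the set $V\setminus H$ still contains $W$ and is still contained in $4B$, so $V\setminus H\in\mathcal K_{W,0}(4B)$; minimality of $V$ then gives $\|D\ch_V\|(X)\le \|D\ch_{V\setminus H}\|(X)$. Since $\ch_V$ and $\ch_{V\setminus H}$ agree on $X\setminus\overline H$ and the variation measure is additive on disjoint Borel sets, this inequality localizes to $P(V,B(y,R))\le P(V\setminus H,B(y,R))$, and Lemma \ref{lem:subminimizer char} then delivers the claimed subminimizer property.

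The final step is to combine the hypothesis $\ch_V^{\vee}(y)=1$ with the weak Harnack inequality. Positivity of the upper density of $V$ at $y$ forces $\mu(V\cap B(y,R/2))>0$, so Corollary \ref{cor:weak Harnack} applied in $B(y,R)$ gives
\[
\mu(V\cap B(y,R))\ge (2^s C_1)^{-1}\mu(B(y,R)).
\]
Since $d(x,y)<3R$ we have $B\subset B(y,4R)$, and doubling yields $\mu(B(y,R))\ge C_d^{-2}\mu(B)$. Combining this with $\mu(V\cap B(y,R))\le \mu(V)\le 4 C_S R\,\rcapa_1(W,4B)$ from the first step produces the desired estimate with, for instance, $C_2=2^{s+2}C_1 C_S C_d^2$.

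The only delicate point is the middle step, where the global minimality of $V$ as a solution of the obstacle problem must be converted into a local $1$-subminimizer inequality on $B(y,R)$; this leverages precisely the geometric position $y\in 3B\setminus 2B$, which keeps $B(y,R)$ clear of the obstacle's support $W$. Once the subminimizer property is in hand, the remaining estimates are routine applications of isoperimetry, doubling, and the weak Harnack inequality.
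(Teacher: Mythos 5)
Your proof is correct, and it yields exactly the same constant $C_2=2^{s+2}C_1 C_d^2 C_S$ as the paper. The route is genuinely different, though. The paper invokes Theorem \ref{thm:weak Harnack}(b) directly: since $W\cap B(z,R)=\emptyset$ one may take $k=0$ in the weak Harnack inequality for obstacle-problem solutions, which bounds $\esssup_{B(z,R/2)}\ch_V$ (and hence $\ch_V^{\vee}$ there) by a multiple of $\mu(V)/\mu(B)$; the isoperimetric inequality then closes the argument. You instead exploit the dichotomy $\ch_V^{\vee}(y)\in\{0,1\}$, reduce to showing the right-hand side is at least $1$ when $\ch_V^{\vee}(y)=1$, and in that case pass through the subminimizer machinery: you verify that $V\setminus H$ remains admissible for any $H\Subset B(y,R)$ (because $W\cap B(y,R)=\emptyset$), localize the resulting perimeter inequality to $B(y,R)$ using locality of the variation measure, conclude via Lemma \ref{lem:subminimizer char} that $\ch_V$ is a $1$-subminimizer in $B(y,R)$, and then apply the measure-density estimate of Corollary \ref{cor:weak Harnack}. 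Both paths ultimately rest on the weak Harnack estimate, but the paper uses its obstacle-problem form (case (b)) while you re-derive the relevant consequence from the subminimizer form (case (a)). Your version is slightly longer but has the merit of making explicit the observation that a solution of the obstacle problem is a $1$-subminimizer wherever the obstacle does not bind — a fact the paper implicitly relies on later (e.g. in the proof of Proposition \ref{prop:constructing the quasiconvex space}, where Proposition \ref{prop:solutions are superminimizers} and Proposition \ref{prop:components are subminimizers} are combined). One cosmetic remark: ``$V\setminus H$ still contains $W$'' should be read in the a.e.\ sense, since the admissibility constraint $u\ge\ch_W$ is only required almost everywhere; this does not affect the argument.
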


\begin{proof}
By Lemma \ref{lem:solutions from capacity} we know that
\[
P(V,X)\le \rcapa_1(W, 4B),
\]
and thus by the isoperimetric inequality \eqref{eq:isop inequality with zero boundary values},
\begin{equation}\label{eq:E1 has small measure}
\mu(V)\le 4C_S R P(V,X)\le 4C_S R \rcapa_1(W,4B).
\end{equation}
For any $z\in 3 B\setminus 2 B$ we have
$B(z,R)\subset 4 B\setminus B$. Since now
 $W\cap B(z,R)=\emptyset$, we can apply
Theorem \ref{thm:weak Harnack}(b) with $k=0$ to get
\begin{align*}
\sup_{B(z,R/2)} \ch_V^{\vee }
&\le \esssup_{B(z,R/2)}\ch_V\\
&\le C_1\left(\frac{R}{R-R/2}\right)^s\vint{B(z,R)}(\ch_V)_+\,d\mu\\
&= \frac{2^s C_1}{\mu(B(z,R))}\int_{B(z,R)} (\ch_V)_+\,d\mu\\
&\le \frac{2^s C_1 C_d^2}{\mu(B)}\mu(V)\\
&\le 2^{s+2} C_1 C_d^2  C_S R \frac{\rcapa_1(W,4B)}{\mu(B)}\quad\textrm{by }\eqref{eq:E1 has small measure}.
\end{align*}
Thus we can choose
$C_2=2^{s+2} C_1 C_d^2  C_S$.
\end{proof}

\section{Constructing a ``geodesic'' space}\label{sec:constructing a quasiconvex space}

In this section we construct a suitable space where the Mazurkiewicz metric
agrees with the ordinary one; this space will be needed
in the proof of the main result.

Recall that in Section
\ref{sec:strong boundary points}, in the space $(Z,\widehat{d},\mu)$
we defined the Mazurkiewicz metric
$\widehat{d}_M$; given a set $V\subset X$ we now define
\[
d_{M}^V(x,y):=\inf\{\diam K: K\subset X\setminus V\textrm{ is a continuum containing }x,y\},
\quad x,y\in X\setminus V.
\]
If $V=\emptyset$, we leave it out of the notation, consistent with
\eqref{eq:widehat d c}.

\begin{lemma}\label{lem:new metric lemma}
Let $V\subset X$ be a bounded open set and let $B(x_0,R_0)$ be a ball such
that $V\Subset B(x_0,R_0)$, and
$\overline{B}(x_0,R_0)\setminus V$ is connected.
Moreover, suppose there is $R>0$ such that
for every $x\in X\setminus V$ and $0<r\le R$,
the connected components of $\overline{B}(x,r)\setminus V$
intersecting $B(x,r/2)$ are finite in number.

Then $d_M^V$ is a metric on $X\setminus V$ such that $d\le d_M^V$,
$d_M^V$ induces the same topology on $X\setminus V$ as $d$, $(d_M^V)_M=d_M^V$,
and $(X\setminus V,d_M^V)$ is complete.
\end{lemma}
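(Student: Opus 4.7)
The plan is to establish the four claims in order. First, to see that $d_M^V$ is a well-defined metric I need every pair $x,y\in X\setminus V$ to admit a continuum in $X\setminus V$ containing both. When $x,y\in\overline{B}(x_0,R_0)\setminus V$, the set itself (compact by properness, connected by hypothesis) serves as such a continuum. For $x\notin\overline{B}(x_0,R_0)$, I would invoke the geodesic property of $X$: a unit-speed geodesic from $x$ to $x_0$ has distance to $x_0$ decreasing linearly, so its sub-arc up to the first hitting point $w\in\partial B(x_0,R_0)$ stays in $X\setminus B(x_0,R_0)\subset X\setminus V$; concatenating this sub-arc with $\overline{B}(x_0,R_0)\setminus V$ (which contains $w$) produces a continuum joining $x$ to any point of $\overline{B}(x_0,R_0)\setminus V$, and gluing two such concatenations handles pairs both lying outside $\overline{B}(x_0,R_0)$. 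The inequality $d\le d_M^V$ is immediate since a continuum containing $x,y$ has diameter at least $d(x,y)$, nondegeneracy then follows, and the triangle inequality holds because the union of two continua meeting at a common point is a continuum of diameter at most the sum.

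The second and most delicate step is showing that $d$ and $d_M^V$ induce the same topology on $X\setminus V$; this is the one place where the finiteness-of-components hypothesis enters, and is where I expect the main difficulty. Since $d\le d_M^V$, every $d_M^V$-open set is $d$-open. For the reverse, fix $x\in X\setminus V$ and $\eps>0$, choose $r\le\min(R,\eps/3)$, and let $C_1,\ldots,C_N$ enumerate the finitely many components of $\overline{B}(x,r)\setminus V$ meeting $B(x,r/2)$, with $x\in C_1$. Each $C_j$ is a closed subset of $X$, so $\bigcup_{j\ge 2}C_j$ is closed and does not contain $x$; setting $\delta:=\min(r/2,\dist(x,\bigcup_{j\ge 2}C_j))$ gives $\delta>0$. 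Any $y\in X\setminus V$ with $d(x,y)<\delta$ lies in $B(x,r/2)\cap(\overline{B}(x,r)\setminus V)$, hence belongs to some $C_j$; the choice of $\delta$ forces $j=1$, so $C_1$ is a continuum of diameter at most $2r<\eps$ witnessing $d_M^V(x,y)<\eps$.

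The remaining two claims follow essentially from the topological equivalence. For $(d_M^V)_M=d_M^V$, the coincidence of topologies means that compactness and connectedness with respect to $d_M^V$ coincide with the same notions with respect to $d$, so the two classes of continua in $X\setminus V$ agree. For any such continuum $K$, the identity $\diam_{d_M^V}K=\diam_d K$ holds because $d\le d_M^V$ gives one direction while $K$ itself is a competitor for $d_M^V(a,b)$ for every $a,b\in K$, giving the other. Taking the infimum over all such $K$ then yields $(d_M^V)_M(x,y)=d_M^V(x,y)$. Finally, a $d_M^V$-Cauchy sequence in $X\setminus V$ is also $d$-Cauchy by $d\le d_M^V$, and by completeness of $X$ together with closedness of $X\setminus V$ it $d$-converges to some $x\in X\setminus V$; by the already-established topological equivalence it also $d_M^V$-converges to $x$, giving completeness of $(X\setminus V,d_M^V)$.
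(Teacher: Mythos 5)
Your proposal is correct and follows essentially the same route as the paper's proof. The only cosmetic differences are that the paper establishes finiteness of $d_M^V$ by noting that $\overline{B}(x_0,r)\setminus V$ remains connected for all $r\ge R_0$ (which is the same geodesic-concatenation idea you use, just stated in one line), phrases the topological equivalence via sequential convergence rather than your explicit $\eps$--$\delta$ argument, and proves $(d_M^V)_M = d_M^V$ by establishing only the inequality $(d_M^V)_M\le d_M^V$ (the converse being automatic for any Mazurkiewicz metric) instead of your stronger observation that $\diam_d K = \diam_{d_M^V} K$ for every continuum $K$.
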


Note that explicitly, for $x,y\in X\setminus V$,
\[
(d_{M}^V)_M(x,y)=
\inf\{\diam_{d_M^V} K:\, K\subset X\setminus V
\textrm{ is a }d_M^V\textrm{-continuum containing }x,y\}.
\]
\begin{proof}
Since $V\Subset B(x_0,R_0)$ and $\overline{B}(x_0,R_0)\setminus V$ is connected,
also every $\overline{B}(x_0,r)\setminus V$ with $r\ge R_0$ is connected,
by the fact that $X$ is geodesic. Thus we have for all $x,y\in X\setminus V$
\[
d_M^V(x,y)\le 2\max\{R_0,d(x,x_0),d(y,x_0)\}<\infty.
\]
Obviously $d\le d_M^V$ and
$d_M^V(x,x)=0$ for all $x\in X\setminus V$.
If $d_M^V(x,y)=0$ then $d(x,y)=0$ and so $x=y$. Obviously
also $d_M^V(x,y)=d_M^V(y,x)$ for all $x,y\in X\setminus V$.
Finally, take $x,y,z\in X\setminus V$.
Take a continuum $K_1\subset X\setminus V$ containing $x,y$ and a continuum
$K_2\subset X\setminus V$ containing $y,z$.
Then $K_1\cup K_2\subset X\setminus V$ is a continuum containing $x,z$
and so
\[
d_M^V(x,z)\le \diam(K_1\cup K_2)\le \diam(K_1)+\diam (K_2).
\]
Taking infimum over $K_1$ and $K_2$, we conclude that the triangle inequality holds.
Hence $d_M^V$ is a metric on $X\setminus V$.

To show that the topologies induced on $X\setminus V$ by $d$ and $d_M^V$ are the same,
take a sequence $x_j\to x$ with respect to $d$ in $X\setminus V$.
Fix $\eps\in (0,R)$. Consider the
components of $\overline{B}(x,\eps/2)\setminus V$ intersecting $B(x,\eps/4)$.
By assumption there are only finitely many.
Each of them not containing $x$ is at a nonzero distance from $x$ and
so for large $j$, every $x_j$ belongs to the component containing $x$; denote it $F_1$.
For such $j$, we have
\[
d_M^V(x_j,x)\le \diam F_1\le \eps. 
\]
We conclude that $x_j\to x$ also with respect to $d_M^V$.
Since we had $d\le d_M^V$, it follows that the topologies are the same.

If $x,y\in X\setminus V$, and $\eps>0$, we can take a continuum $K$
containing $x$ and $y$, with $\diam K<d_M^V(x,y)+\eps$.
The set $K$ is still a continuum in the metric space $(X\setminus V,d_M^V)$, and
for every $z,w\in K$,
\[
d_M^V(z,w)\le \diam K< d_M^V(x,y)+\eps.
\]
It follows  that $\diam_{d_M^V} K\le d_M^V(x,y)+\eps$,
and so $(d_M^V)_M(x,y)\le d_M^V(x,y)+\eps$, showing that $(d_M^V)_M=d_M^V$.

Finally let $(x_j)$ be a Cauchy sequence in $(X\setminus V, d_M^V)$.
Since $d\le d_M^V$, it is also a Cauchy sequence in $(X,d)$,
and so $x_j\to x\in X\setminus V$ with respect to $d$.
But as we showed before, this implies that $x_j\to x$
with respect to $d_M^V$.
\end{proof}

Let $B$ be a ball and let $B_1,B_2\subset B$
be two other balls, and let $u\in L^1(B)$ such that $u=1$ in $B_1$
and $u=0$ in $B_2$. Then we have
\begin{equation}\label{eq:KoLa result}
\int_{B}|u-u_{B}|\,d\mu\ge \frac{1}{2}\min\{\mu(B_1),\mu(B_2)\};
\end{equation}
this follows easily by considering the cases $u_{B}\le 1/2$
and $u_{B}\ge 1/2$.

We have the following \emph{linear local connectedness};
versions of this property have been proved before e.g. in \cite{HK}, but they
assume certain growth bounds on the measure, which we do not want to assume.

\begin{lemma}\label{lem:lin loc connectedness}
Let $B(x_0,R)$ be a ball and
let $V\subset B(x_0,2R)$ with
\begin{equation}\label{eq:V capacity in 4B}
\rcapa_1(V,B(x_0,3R))<\frac{1}{12C_P C_d^3}\frac{\mu(B(x_0,R))}{R}.
\end{equation}
Then every pair of points  $y,z\in B(x_0,5R)\setminus B(x_0,4R)$
can be joined by a curve in $B(x_0,6R)\setminus V$.
\end{lemma}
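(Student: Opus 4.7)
My plan is to produce a curve from $y$ to $z$ inside $B(x_0,6R)$ whose integral against a suitable upper gradient $g$ is strictly less than $1$. Such a curve must automatically miss $V$: if it crossed some $x\in V$, the upper gradient inequality together with $u(y)=0$ and $u(x)=1$ would force $\int_\gamma g\,ds\ge 1$. Using \eqref{eq:V capacity in 4B} and the definition of $\rcapa_1$, I pick $u\in N^{1,1}(X)$ with $u=1$ on $V$, $u=0$ on $X\setminus B(x_0,3R)$, $0\le u\le 1$, and a genuine upper gradient $g$ of $u$ with
\[
\int_X g\,d\mu<\frac{1}{12C_PC_d^3}\frac{\mu(B(x_0,R))}{R}.
\]
Since $y,z\in B(x_0,5R)\setminus B(x_0,4R)$ and $V\subset B(x_0,2R)$, we have $\dist(y,V),\dist(z,V)>2R$; in particular $u(y)=u(z)=0$, the balls $B(y,R/20)$ and $B(z,R/20)$ lie inside $B(x_0,6R)\setminus V$, and the minimal $1$-weak upper gradient $g_u$ vanishes a.e.\ on them (since $u$ is constant there).

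To build the curve I imitate the maximum-escape construction in the proof of Lemma~\ref{lem:finding a positive measure component}. For $k\in\N$ and $\delta>0$ let
\[
v_k(w):=\inf_\gamma\int_\gamma(g\wedge k+\delta)\,ds,
\]
where the infimum runs over curves in $B(x_0,6R)$ from $y$ to $w$. Then $v_k$ is $(k+\delta)$-Lipschitz on $B(x_0,6R)$, has $g\wedge k+\delta$ as upper gradient there, and satisfies $v_k(y)=0$. A uniform-in-$k$ bound $v_k(z)<1/2$ combined with the Arzel\`a--Ascoli reparametrization argument at the end of the proof of Lemma~\ref{lem:finding a positive measure component} then extracts the desired curve $\gamma$ with $\int_\gamma g\,ds\le 1/2<1$, which must avoid $V$.

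To bound $v_k(z)=|v_k(z)-v_k(y)|$ I use a Poincar\'e--telescoping scheme. Since $\mathcal{M}g$ may be infinite at the prescribed $y$ or $z$, I first slide them to surrogates $y'\in B(y,R/20)$ and $z'\in B(z,R/20)$ outside the bad set $\{\mathcal{M}g>c/R\}$; the weak-$(1,1)$ bound for $\mathcal{M}$, together with the smallness of $\|g\|_{L^1}$ and doubling, guarantees such surrogates exist for a suitably chosen $c$. The short connections $y\leftrightarrow y'$ and $z\leftrightarrow z'$ can be realized by $1$-good curves inside $B(y,R/20)\cup B(z,R/20)\subset B(x_0,6R)\setminus V$, on which $g_u=0$ a.e., contributing only $\delta R/10$ to $v_k$. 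The telescope around each of $y'$ and $z'$ bounds $|v_k(\cdot)-(v_k)_{B(\cdot,r_0)}|$ by $CC_dC_Pr_0(\mathcal{M}g(\cdot)+\delta)<1/8$ for an appropriate $r_0$. The middle piece $|(v_k)_{B(y',r_0)}-(v_k)_{B(z',r_0)}|$ is handled by a chained Poincar\'e inequality along the concatenated geodesic $y'\to x_0\to z'$ (length $<10R+R/10$, contained in $B(x_0,6R)$) using $O(R/r_0)$ balls of bounded overlap; the prefactor $\tfrac{1}{12C_PC_d^3}$ in \eqref{eq:V capacity in 4B} is calibrated so that this chain sum stays below $1/4$. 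Summing yields $v_k(z)<1/2$.

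The main obstacle is the constant bookkeeping in the chained Poincar\'e step: the number of chain balls, their bounded-overlap constant, and the doubling ratios between chain-ball measures and $\mu(B(x_0,R))$ must cooperate with $\tfrac{1}{12C_PC_d^3}$ to keep the middle contribution strictly below $1/4$. A secondary technical point is arranging the true upper gradient $g$ so that its mass is negligible on $B(y,R/20)\cup B(z,R/20)$, ensuring the surrogate step contributes arbitrarily little; this uses that the minimal $1$-weak upper gradient vanishes there and that $g$ can be taken $L^1$-close to it, combined with passing to $1$-good curves for which the weak upper gradient inequality applies. Once these calibrations are in place the argument is a routine adaptation of Lemma~\ref{lem:finding a positive measure component}.
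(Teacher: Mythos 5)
Your proposal is a genuinely different route from the paper's. The paper never constructs a distance function $v_k$ or does any telescoping: it uses a pure modulus comparison. It sets $B_1:=B(y,R)$, $B_2:=B(z,R)$ (disjoint subsets of $B(x_0,6R)\setminus B(x_0,3R)$), bounds $\Mod_1(\Gamma)$ from \emph{below} by $\frac{1}{12C_PC_d^3}\frac{\mu(B(x_0,R))}{R}$ with a single application of the Poincar\'e inequality on $B(x_0,6R)$ together with \eqref{eq:KoLa result}, bounds $\Mod_1(\Gamma\cap\Gamma_V)$ from \emph{above} by the same quantity using an admissible function for $\rcapa_1(V,B(x_0,3R))$, and concludes by subadditivity of modulus that $\Gamma\setminus\Gamma_V\neq\emptyset$. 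The point is that this lower bound for $\Mod_1(\Gamma)$ requires only one Poincar\'e step at a single scale, with no summation over links; this is exactly where the constant $\frac{1}{12C_PC_d^3}$ comes from.

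Your chained-telescope scheme, by contrast, has a quantitative gap that is not a matter of bookkeeping: with the hypothesis $\|g\|_{L^1}<\frac{1}{12C_PC_d^3}\frac{\mu(B(x_0,R))}{R}$ the middle (chain) contribution cannot be pushed below $1/4$. Indeed, since $y',z'$ can be at distance $\sim 10R$ and the chain must stay inside $B(x_0,6R)$ where $v_k$ is defined, you need balls of radius comparable to $R$, hence on the order of $10$ links. For each link $B_i=B(w_i,R)$ along the geodesic $y'\to x_0\to z'$ one has $\mu(B_i)\gtrsim C_d^{-3}\mu(B(x_0,R))$, so a single Poincar\'e step gives roughly
\[
|(v_k)_{B_i}-(v_k)_{B_{i+1}}|\lesssim C_d^2\cdot C_PR\,\vint{2B_i}g\,d\mu
\lesssim C_d^{5}\,C_PR\,\frac{\|g\|_{L^1}}{\mu(B(x_0,R))}<\frac{C_d^2}{3},
\]
and summing over the $\sim 10$ links gives something of order $C_d^2$, not $1/4$. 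So the argument in Lemma~\ref{lem:finding a positive measure component} does not transplant: there the two points were both inside $B(x_0,r/10)$, so a single Poincar\'e step at scale $\sim r$ sufficed and no chain was needed, whereas here the far-apart endpoints force a chain whose length multiplies the error. Your method would prove a version of the lemma with a smaller constant (worse by roughly a factor $C_d^2$), which would propagate into a worse $\beta$, but it does not prove the lemma as stated.

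A minor additional remark: the surrogate step (connecting $y$ to $y'$ and $z'$ to $z$) is under-justified as written, since the fixed upper gradient $g$ need not be small along a particular geodesic even though $g_u$ vanishes a.e.\ near $y$ and $z$. The clean fix is not to control $\int_\gamma g$ on those short arcs at all, but simply to observe that the geodesics from $y$ to $y'$ and from $z'$ to $z$ lie inside $B(y,R/20)$ and $B(z,R/20)$, which are disjoint from $V$ because $V\subset B(x_0,2R)$ and $y,z\notin B(x_0,4R)$; one then defines $v_k$ from $y'$ and extracts the middle curve by Arzel\`a--Ascoli. This mirrors the concatenation in the paper's proof, where the end geodesics avoid $V$ trivially, and is easy to repair. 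The chain issue, however, is structural.
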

\begin{proof}
If $d(y,z)\le 2R$, then the result is clear since the space is geodesic.
Thus assume that $d(y,z)> 2R$. Consider the disjoint balls
$B_1:=B(y,R)$ and $B_2:=B(z,R)$ which both belong to
$B(x_0,6R)\setminus B(x_0,3R)$.
Denote by $\Gamma$ the family of curves $\gamma$ in $B(x_0,6R)$
with $\gamma(0)\in B_1$ and $\gamma(\ell_{\gamma})\in B_2$.
Note that $\Mod_1(\Gamma)<\infty$ since $\dist(B_1,B_2)>0$.
Let $\eps>0$.
Let $g\in L^1(B(x_0,6R))$ such that $\int_{\gamma}g\,ds\ge 1$
for all $\gamma\in\Gamma$ and
\[
\int_{B(x_0,6R)}g\,d\mu<\Mod_1(\Gamma)+\eps.
\]
Let
\[
u(x):=\min\left\{1,\inf \int_{\gamma}g\,ds\right\},\quad x\in B(x_0,6R),
\]
where the infimum is taken over curves $\gamma$ (also constant curves)
in $B(x_0,6R)$ with $\gamma(0)=x$ and
$\gamma(\ell_{\gamma})\in B_1$. Then $u=1$ in $B_2$.
Moreover, $g$ is an upper gradient of $u$ in $B(x_0,6R)$, see \cite[Lemma 5.25]{BB},
and $u$ is $\mu$-measurable by \cite[Theorem 1.11]{JJRRS}.
In total, $u\in N^{1,1}(B(x_0,6R))$ with $u=0$ in $B_1$ and
$u=1$ in $B_2$.
Thus using the Poincar\'e inequality,
\begin{align*}
\Mod_1(\Gamma)
&>\int_{B(x_0,6R)}g\,d\mu-\eps\\
&\ge \frac{1}{6C_P R}\int_{B(x_0,6R)}|u-u_{B(x_0,6R)}|\,d\mu-\eps\\
&\ge \frac{1}{12C_P R}\min\{\mu(B_1),\mu(B_2)\}-\eps\quad\textrm{by }\eqref{eq:KoLa result}\\
&\ge \frac{1}{12C_P C_d^3 R}\mu(B(x_0,R))-\eps
\end{align*}
and so
\[
\Mod_1(\Gamma)\ge \frac{1}{12C_P C_d^3 R}\mu(B(x_0,R)).
\]
On the other hand, by \eqref{eq:V capacity in 4B} we find a function
$v\in N^{1,1}(X)$ such that $v=1$ in $V$,
$v=0$ in $X\setminus B(x_0,3R)$,
and $v$ has an upper gradient $\widetilde{g}$ satisfying
\[
\int_X \widetilde{g}\,d\mu< \frac{1}{12C_P C_d^3}\frac{\mu(B(x_0,R))}{R}.
\]
Denote the family of all curves intersecting $V$ by $\Gamma_V$.
Now $\int_{\gamma}\widetilde{g}\,ds\ge 1$ for all $\gamma\in \Gamma\cap \Gamma_V$,
and so
\[
\Mod_1(\Gamma\cap \Gamma_V)\le \int_X \widetilde{g}\,d\mu
< \frac{1}{12C_P C_d^3}\frac{\mu(B(x_0,R))}{R}.
\]
Thus $\Gamma\setminus \Gamma_V$ is nonempty.
Take a curve $\gamma\in \Gamma\setminus \Gamma_V$. Now we get the required curve
by concatenating three curves:
the first going from $y$ to $\gamma(0)$ inside $B(y,R)$
(using the fact that the space is geodesic), the second $\gamma$, and the third 
going from $\gamma(\ell_{\gamma})$ to $z$ inside $B(z,R)$.
\end{proof}

By using an argument involving Lipschitz cutoff functions, it is easy to
see that for any ball $B(x,r)$ and any set $A\subset B(x,r)$, we have
\begin{equation}\label{eq:capacity and Hausdorff measure}
\rcapa_1(A,B(x,3r))\le C_d \mathcal H(A).
\end{equation}

In the following proposition we construct the space in which the metric and
Mazurkiewicz metric agree.

\begin{proposition}\label{prop:constructing the quasiconvex space}
Let $B=B(x,R)$ be a ball with $0<R<\frac{1}{32} \diam X$,
and let $A\subset B$ with
\[
\mathcal H(A)
\le \frac{1}{24 C_P C_S C_2 C_r C_d^4}
\frac{\mu(B)}{R}.
\]
Let $\eps>0$. Then we find an open set $V$ with
$A\subset V\subset 2B$ and
\[
P(V,X)\le C_d\mathcal H(A)+\eps,
\]
and such that the following hold:
the space $(Z,d_M^V,\mu)$ with $Z=X\setminus V$ is a complete metric space with
$(d_M^V)_M=d_M^V$, $\mu$ in $Z$ is a Borel regular outer measure
and doubling with constant $2^s C_1 C_d^2$, and
for every $y\in X\setminus V$ and $r>0$ we have
\[
\frac{\mu(B_Z(y,r))}{\mu(B(y,r))}\ge (2^s C_1 C_d)^{-1}
\]
where $B_Z(y,r)$ denotes an open ball in $Z$, defined with respect to the metric
$d_M^V$.
\end{proposition}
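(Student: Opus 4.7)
The plan is to construct $V$ as the open representative of a solution to a single obstacle problem, with obstacle an open neighborhood of $A$ built from a near-optimal Hausdorff-content cover; the remaining claims then come from the subminimizer theory of Section \ref{sec:least gradient} together with Lemmas \ref{lem:new metric lemma} and \ref{lem:lin loc connectedness}.

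First I fix $\delta>0$ small (to be absorbed into $\eps$) and cover $A$ by balls $\{B_j=B(x_j,r_j)\}_{j\in\N}$ with $r_j$ arbitrarily small, centers $x_j\in A$ (so $B_j\subset B$), and $\sum_j\mu(B_j)/r_j\le\mathcal H(A)+\delta$. Let $\widetilde A:=\bigcup_j B_j\subset B$. Testing with $\eta:=\max_j\eta_j$ for the standard $1/r_j$-Lipschitz cutoffs $\eta_j$ of $B_j$ supported in $2B_j\subset 4B$, and using that upper gradients of a maximum are dominated by sums of upper gradients, yields $\rcapa_1(\widetilde A,4B)\le C_d(\mathcal H(A)+\delta)$. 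Lemma \ref{lem:solutions from capacity} then produces a set $E\supset\widetilde A$ (a.e.) solving the $\mathcal K_{\widetilde A,0}(4B)$-obstacle problem with $P(E,X)\le C_d(\mathcal H(A)+\delta)$. By Proposition \ref{prop:solutions are superminimizers}, $\ch_E$ is a $1$-superminimizer in $4B$, and Theorem \ref{thm:superminimizers are lsc} makes $\ch_E^{\wedge}$ lower semicontinuous; thus $V^*:=I_E=\{\ch_E^{\wedge}>1/2\}$ is open, and since $\widetilde A$ is open and $\widetilde A\subset E$ a.e., we have $A\subset\widetilde A\subset V^*$.

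To confine $V^*$ to $2B$ I invoke Lemma \ref{lem:smallness in annuli} with $W=\widetilde A\subset B$: the quantitative hypothesis on $\mathcal H(A)$ makes the resulting upper bound on $\ch_E^{\vee}$ strictly less than $1$ on $3B\setminus 2B$, so by the $\{0,1\}$-valued nature of $\ch_E^{\vee}$ one gets $V^*\cap(3B\setminus 2B)=\emptyset$. The set $V:=V^*\cap 2B$ is then open with $A\subset V\subset 2B$, and since $V^*$ decomposes as the disjoint union $V\sqcup(V^*\setminus\overline{3B})$ of pieces at distance $\ge R$, perimeters add and $P(V,X)\le P(V^*,X)\le C_d\mathcal H(A)+\eps$ (for $\delta$ small). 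To apply Lemma \ref{lem:new metric lemma} with $R_0=3R$ and $x_0=x$ I verify: (a) $V\Subset 3B$, immediate; (b) $\overline{B}(x_0,R_0)\setminus V$ is connected --- I join any point of $\overline{2B}\setminus V$ to the closed annulus $\overline{3B}\setminus 2B$ (itself connected in the geodesic space $X$) through $X\setminus V$ by using Lemma \ref{lem:lin loc connectedness}, whose capacity hypothesis follows from the bound on $\rcapa_1(\widetilde A,4B)$; (c) for each $y\in X\setminus V$ and small $r$, only finitely many components of $\overline{B}(y,r)\setminus V$ meet $B(y,r/2)$ --- this follows by applying Proposition \ref{prop:connected components} to the closed set $X\setminus V$ and then Proposition \ref{prop:components are subminimizers} (valid since $\ch_{X\setminus V}=1-\ch_V$ is a $1$-subminimizer, $\ch_V$ being a $1$-superminimizer) together with Corollary \ref{cor:weak Harnack}, which force each such component to have measure $\ge(2^sC_1)^{-1}\mu(B(y,r))$.

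Lemma \ref{lem:new metric lemma} then supplies the metric structure, completeness, and $(d_M^V)_M=d_M^V$. For the density bound, given $y\in X\setminus V$ and $r>0$ I let $F^*$ be the component of $(X\setminus V)\cap\overline{B}(y,s)$ containing $y$ for $s$ slightly less than $r/2$; then $\diam F^*\le 2s<r$ forces $F^*\subset B_Z(y,r)$, while Corollary \ref{cor:weak Harnack} applied to $\ch_{F^*}$ as a $1$-subminimizer in $B(y,s)$ gives $\mu(F^*)\ge(2^sC_1)^{-1}\mu(B(y,s))$. Letting $s\nearrow r/2$ and using $\mu(B(y,r/2))\ge C_d^{-1}\mu(B(y,r))$ yields $\mu(B_Z(y,r))/\mu(B(y,r))\ge(2^sC_1C_d)^{-1}$, and the doubling constant $2^sC_1C_d^2$ on $Z$ is then immediate from $\mu(B_Z(y,2r))\le\mu(B(y,2r))\le C_d\mu(B(y,r))\le 2^sC_1C_d^2\mu(B_Z(y,r))$. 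The hard part is the bookkeeping of constants in Lemma \ref{lem:smallness in annuli} so that its bound is \emph{strictly} below $1$ on the annulus (this is what fixes the explicit threshold on $\mathcal H(A)$), together with the technical handling of exceptional $y$ at which $F^*$ could be of measure zero; such $y$ form a set that can be absorbed into $V$ via a further open cover of small capacity, without disturbing the perimeter or density estimates.
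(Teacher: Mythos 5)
Your strategy is the same as the paper's (solve a $\mathcal K_{W,0}(4B)$-obstacle problem, pass to the lower-semicontinuous representative, confine $V$ to $2B$ via Lemma \ref{lem:smallness in annuli}, then feed subminimizer estimates into Lemma \ref{lem:new metric lemma}), but there are three genuine gaps, the second of which is the most serious.

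First, defining $V:=V^*\cap 2B$ rather than proving $V^*\subset 2B$ is not harmless. The later steps invoke that $\ch_{X\setminus V}$ is a $1$-subminimizer in $B(x,4R)$ (via Proposition \ref{prop:solutions are superminimizers} and Proposition \ref{prop:components are subminimizers}); that property holds for the obstacle-problem solution $V^*$, not automatically for a proper subset of it. The fix is short and is exactly what the paper does: once $\ch_{V^*}^\vee=0$ on $3B\setminus 2B$, any nonempty part of $V^*$ in $4B\setminus\overline{3B}$ is open, has positive measure and positive perimeter, and lies at distance $\ge R$ from $V^*\cap 2B$; discarding it would strictly decrease $P(\cdot,X)$ while staying admissible, contradicting minimality. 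This gives $V^*\subset 2B$, so $V=V^*$ and the superminimizer property carries over.

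Second, and more importantly, the verification of hypothesis (b) of Lemma \ref{lem:new metric lemma} (connectedness of $\overline{B}(x_0,R_0)\setminus V$) is not correct as stated. Lemma \ref{lem:lin loc connectedness} only joins \emph{pairs of points in the annulus $B(x_0,5R_0')\setminus B(x_0,4R_0')$} by curves in $B(x_0,6R_0')\setminus V$, under a capacity hypothesis on $V$ relative to the ball $B(x_0,3R_0')$. It does not join an arbitrary point of $\overline{2B}\setminus V$ to the annulus, and applying it locally around such a point would require a local capacity smallness of $V$ that the global bound does not give. The paper's actual argument is different and uses minimality a second time: Lemma \ref{lem:lin loc connectedness} and the geodesic property put $6\overline{B}\setminus 4B$ inside a single component $F_1$ of $F\cap 6\overline{B}$; if there were \emph{another} component $F_2$ of positive measure, then $P(F_2,6B)>0$ by the relative isoperimetric inequality, and Proposition \ref{prop:connected components} would make $V\cup\bigcup_{j\ge 2}F_j$ an admissible competitor with strictly smaller perimeter, contradicting minimality; finally the leftover null set is empty because $\ch_F=\ch_F^\vee$. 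This chain of reasoning is what actually establishes connectedness and must be reproduced.

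Third, the handling of ``exceptional $y$ at which $F^*$ could be of measure zero'' by ``absorbing such $y$ into $V$ via a further open cover of small capacity'' will not work: any enlargement of $V$ destroys the property that $V$ solves the $\mathcal K_{W,0}(4B)$-obstacle problem, and both the superminimizer property and the component-minimality argument above rely on $V$ being an exact solution. The paper instead shows such points cannot exist: because $\ch_V=\ch_V^\wedge$ (i.e.\ $\ch_F=\ch_F^\vee$), a point $z\in F$ at positive distance from every positive-measure component of $F\cap\overline{B}(y,r)$ would have $\mu(B(z,\delta)\cap F)=0$ for small $\delta$, contradicting $\ch_F^\vee(z)=1$. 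You should replace your absorption idea with this argument; it is the same mechanism that makes $V^*$ genuinely open and is the crucial use of Theorem \ref{thm:superminimizers are lsc} in this proof.

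Aside from these three points, the remainder (the cover-based bound $\rcapa_1(W,4B)\le C_d\mathcal H(A)+\eps$, the use of Lemma \ref{lem:smallness in annuli}, the diameter argument $F^*\subset B_Z(y,r)$, the density lower bound via Corollary \ref{cor:weak Harnack}, and the deduction of the $Z$-doubling constant) matches the paper's proof.
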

\begin{proof}
Using  the fact that $\rcapa_1$
is an outer capacity in the sense of \eqref{eq:rcapa outer capacity},
as well as \eqref{eq:capacity and Hausdorff measure},
we find an open set $W$, with $A\subset W\subset B$, such that (note that 
the first inequality is obvious)
\[
\rcapa_1(W,4B)\le \rcapa_1(W,3B)\le \rcapa_1(A,3B)+\eps\le C_d\mathcal H(A)+\eps.
\]
We can assume that
\[
\eps<\frac{1}{24 C_P C_S C_2 C_r C_d^3}\frac{\mu(B)}{R}.
\]
Take a solution $V$ of the
$\mathcal K_{W,0}(4B)$-obstacle problem.
By Lemma \ref{lem:solutions from capacity}, we have
\[
P(V,X)\le \rcapa_1(W,4B)\le C_d\mathcal H(A)+\eps.
\]
By Theorem \ref{thm:superminimizers are lsc}, the function $\ch_V^{\wedge}$
is lower semicontinuous, and by redefining $V$ in a set of measure zero,
we get $\ch_V=\ch_V^{\wedge}$ and so $V$ is open.
By Lemma \ref{lem:smallness in annuli} we know that
for all
$y\in 3 B\setminus 2B$,
\[
\ch_V^{\vee}(y)\le C_2 R \frac{\rcapa_1(W,4B)}{\mu(B)}
\le C_2 R \frac{C_d \mathcal H(A)+\eps}{\mu(B)}<1
\]
and so $\ch_V^{\vee}=0$ in $3 B\setminus 2B$. Then
in fact $\ch_V=\ch_V^{\vee}=0$ in $4B\setminus 2B$, that is,
$V\subset 2B$, because else we could remove the parts of $V$ inside $4B\setminus 3B$
to decrease $P(V,X)$.

By the isoperimetric inequality
\eqref{eq:isop inequality with zero boundary values},
\begin{equation}\label{eq:measure of V}
\mu(V)\le 2C_S R P(V,X)\le 2 C_S C_d R \mathcal H(A)
+2C_S R \eps
\le \frac{\mu(B)}{2C_d^2}.
\end{equation}
Moreover, by \eqref{eq:variational one and BV capacity} we get
\begin{align*}
\rcapa_1(V,3B)
&\le C_r \rcapa_{\BV}^{\vee}(V,3B)\\
&\le C_r P(V,X)\le C_r C_d \mathcal H(A)+C_r\eps
< \frac{1}{12C_P C_d^3}\frac{\mu(B)}{R}.
\end{align*}
By Lemma \ref{lem:lin loc connectedness},
$5B\setminus 4B$ belongs to one component of
$6\overline{B}\setminus V$.
Since the space is geodesic, in fact
$6\overline{B}\setminus 4B$ belongs to one component of
$6\overline{B}\setminus V$.
Call this component $F_1$. Moreover, denote $F:=X\setminus V$; $F$ is a closed set with
$P(F,X)=P(V,X)<\infty$.
Consider all components of $F\cap 6\overline{B}$.
Suppose there is another component $F_2$ with nonzero $\mu$-measure.
Denote by $F_1,F_2,\ldots$ all the components with nonzero $\mu$-measure
(as usual, some of these may be empty).
By the relative isoperimetric inequality
\eqref{eq:relative isoperimetric inequality}, we have
\begin{equation}\label{eq:F2 has perimeter}
P(F_2,6B)>0.
\end{equation}
Now the set $\widetilde{V}:=V\cup \bigcup_{j=2}^{\infty}F_j\subset 4B$ is admissible
for the $\mathcal K_{W,0}(4B)$-obstacle problem, with
\begin{align*}
P(\widetilde{V},X)
&=P(\widetilde{V},6B)\\
&=P\Bigg(X\setminus \Big(V\cup \bigcup_{j=2}^{\infty}F_j\Big),6B\Bigg)\\
&=P\Bigg(F\setminus \bigcup_{j=2}^{\infty}F_j,6B\Bigg)\\
&= P(F,6B)-\sum_{j=2}^{\infty}P(F_j,6B)\quad\textrm{by Proposition }\ref{prop:connected components}\\
&<P(F,6B)\quad\textrm{by }\eqref{eq:F2 has perimeter}\\
&=P(V,6B)=P(V,X).
\end{align*}
This is a contradiction with the fact that $V$ is a solution
of the $\mathcal K_{W,0}(4B)$-obstacle problem. Thus
by Proposition \ref{prop:connected components},
$F\cap 6\overline{B}$
is the union of $F_1$ and a set of measure zero $N$.
Suppose
\[
y\in 6\overline{B}\cap F\setminus F_1
=4B\cap F\setminus F_1.
\]
Now $y$ is at a nonzero distance from $F_1$. Thus for small $\delta>0$,
\[
\mu(B(y,\delta)\cap F)\le \mu(N)=0.
\]
Note that since we had  $\ch_V=\ch_V^{\wedge}$, it follows that
$\ch_F=\ch_F^{\vee}$. Thus in fact such $y$ cannot exist
and $F\cap 6\overline{B}=F_1$ is connected.

If $y\in F\setminus B(x,3R)$ and
$0<r\le R$, then $\overline{B}(y,r)\cap F=\overline{B}(y,r)$ is connected since
the space is geodesic.
If $y\in F\cap B(x,3R)$ and
$0<r\le R$, by Proposition \ref{prop:connected components} we know that
$F\cap \overline{B}(y,r)$
consists of at most countably many components $F_1,F_2,\ldots$
and a set of measure zero $\widetilde{N}$.
By Proposition \ref{prop:solutions are superminimizers}
we know that $\ch_F$ is a $1$-subminimizer in $B(x,4R)$, and then 
also in $B(y,r)\subset B(x,4R)$.
Then each $\ch_{F_j}$ is a $1$-subminimizer in $B(y,r)$
by Proposition \ref{prop:components are subminimizers}.
By Corollary \ref{cor:weak Harnack} we get for each $F_j$
with $\mu(B(y,r/2)\cap F_j)>0$ that
\begin{equation}\label{eq:subminimizer component measure lower bound}
\frac{\mu(F_j\cap B(y,r))}{\mu(B(y,r))}\ge (2^s C_1)^{-1}.
\end{equation}
Thus there are less than $2^s C_1+1$ such components, which we can
relabel $F_1,\ldots,F_M$.
Suppose
\[
z\in B(y,r/2)\cap \widetilde{N}\setminus \bigcup_{j=1}^M F_j.
\]
This is at nonzero distance from all $F_1,\ldots,F_M$. Thus for small $\delta>0$,
\[
\mu(B(z,\delta)\cap F)\le \mu(\widetilde{N})+\sum_{j=M+1}^{\infty}\mu(F_j\cap B(y,r/2))=0.
\]
As before, we have $\ch_F=\ch_F^{\vee}$. Thus in fact such $z$ cannot exist
and
\[
F\cap B(y,r/2)=B(y,r/2)\cap \bigcup_{j=1}^M F_j.
\]
Now Lemma \ref{lem:new metric lemma} gives that $(Z,d_M^V,\mu)$,
with $Z=X\setminus V$,
is a complete metric space, $d\le d_M^V$, the topologies induced
by $d$ and $d_M^V$ are the same, and $(d_M^V)_M=d_M^V$.
Note that $\mu$ restricted to the subsets of $X\setminus V$ is still a Borel
regular outer measure, see \cite[Lemma 3.3.11]{HKST15}.
Since the topologies induced by $d$ and $d_M^V$
are the same, $\mu$ remains a Borel regular outer measure in $Z$.
(Note that as sets, we have $X\setminus V=F=Z$.)

Denoting by $F_1$ the component of $F\cap \overline{B}(y,r)$ containing
$y$,
by \eqref{eq:subminimizer component measure lower bound} we have for
$y\in F\cap B(x,3R)$ and
$0<r\le R$ that
\begin{equation}\label{eq:size of F1}
\frac{\mu(B(y,r)\cap F_1)}{\mu(B(y,r))}\ge (2^s C_1)^{-1}.
\end{equation}
Recall that if $y\in F\setminus B(x,3R)$, then $F_1=\overline{B}(y,r)$
and so \eqref{eq:size of F1} holds.
Eq. \eqref{eq:size of F1} is easily seen to hold also for
all $x\in F$ and $r>R$ by \eqref{eq:measure of V}.
It follows that for all $y\in F$ and $r>0$, we have
\[
\frac{\mu(B_Z(y,2r))}{\mu(B(y,r))}\ge (2^s C_1)^{-1}
\]
and so in fact
\[
\frac{\mu(B_Z(y,r))}{\mu(B(y,r))}\ge (2^s C_1 C_d)^{-1}\quad\textrm{for all }y\in Z
\textrm{ and }r>0,
\]
as desired.
Thus 
\[
\frac{\mu(B_Z(y,2r))}{\mu(B_Z(y,r))}\le 2^s C_1 C_d\frac{\mu(B(y,2r))}{\mu(B(y,r))}
\le 2^s C_1 C_d^2.
\]
Thus in the space $(Z,d_M^V,\mu)$, the measure $\mu$
is doubling with constant $2^s C_1 C_d^2$.
\end{proof}

\section{Proof of the main result}\label{sec:proof of the main result}

In this section we prove the main result of the paper, Theorem \ref{thm:main theorem}.

First note that with the choice
$\widehat{C}_d=2^s C_1 C_d^2$, the constant appearing in
Corollary \ref{cor:density points} becomes
\[
\frac{1}{4 \widehat{C}_d^{12}}
=\frac{1}{4 (2^s C_1 C_d^2)^{12}}=:\beta_0.
\]
Recall from \eqref{eq:C1} that we can take
$C_1=2^{(s+1)^2}(6\widetilde{C}_S C_d)^s$.
Define
\begin{equation}\label{eq:definition of beta}
\begin{split}
\beta:=\frac{\beta_0}{2^s C_1 C_d}=\frac{1}{2^{2+s}C_1 C_d (2^s C_1 C_d^2)^{12}}
&=\frac{1}{2^{13s +2} (2^{(s+1)^2}(6\widetilde{C}_S C_d)^s)^{13} C_d^{25}}\\
&=\frac{1}{2^{13s^2+52s +15}3^{13s} \widetilde{C}_S^{13s} C_d^{13s+25}}.
\end{split}
\end{equation}
Note that in the Euclidean space $\R^n$, $n\ge 2$, we can take $C_d=2^n$, $s=n$, and $\widetilde{C}_S=2^{-1}n^{-1/2}\omega_n^{1/n}$, where $\omega_n$ is the volume
of the Euclidean unit ball, and then
\begin{equation}\label{eq:choice of beta in Euclidean space}
\beta = 2^{-26n^2-64n-15} 3^{-13n}n^{13n/2}\omega_n^{-13}.
\end{equation}

Recall the definition of the strong boundary from \eqref{eq:strong boundary}.

\begin{theorem}\label{thm:comparison of boundaries}
Let $\Om\subset X$ be open and let
$E\subset X$ be $\mu$-measurable with
$\mathcal H(\Sigma_{\beta} E\cap \Om)<\infty$.
Then $\mathcal H((\partial^*E\setminus \Sigma_{\beta} E)\cap \Om)=0$.
\end{theorem}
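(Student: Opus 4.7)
The plan is to argue by contradiction, combining the auxiliary-space construction of Proposition~\ref{prop:constructing the quasiconvex space} with the strong-boundary-point existence result of Corollary~\ref{cor:density points}. Suppose $\mathcal H((\partial^{*}E\setminus \Sigma_{\beta}E)\cap\Om)>0$. Using the hypothesis $\mathcal H(\Sigma_{\beta}E\cap\Om)<\infty$, a standard covering/differentiation argument for the codimension-one Hausdorff measure yields a point $x_{0}\in(\partial^{*}E\setminus\Sigma_{\beta}E)\cap\Om$ at which
\[
\lim_{R\to 0}\frac{R\,\mathcal H(\Sigma_{\beta}E\cap B(x_{0},9R))}{\mu(B(x_{0},R))}=0.
\]
For all sufficiently small $R>0$ we may then arrange $B(x_{0},18R)\Subset\Om$ and that $\mathcal H(\Sigma_{\beta}E\cap B(x_{0},9R))$ meets the smallness hypothesis of Proposition~\ref{prop:constructing the quasiconvex space} applied to the ball $B=B(x_{0},9R)$. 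Since $x_{0}\in\partial^{*}E$, at every scale both $\mu(B(x_{0},R)\cap E)$ and $\mu(B(x_{0},R)\setminus E)$ are positive, and along some sequence $R_{n}\to 0$ each of them is comparable to $\mu(B(x_{0},R_{n}))$.

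Fixing such a small $R$ and applying Proposition~\ref{prop:constructing the quasiconvex space} with $B=B(x_{0},9R)$ and $A=\Sigma_{\beta}E\cap B$ produces an open set $V$ with $A\subset V\subset 2B$ and the auxiliary metric measure space $(Z,d_{M}^{V},\mu)$, $Z:=X\setminus V$, in which $\mu$ is doubling with constant $\widehat{C}_{d}:=2^{s}C_{1}C_{d}^{2}$, the Mazurkiewicz metric coincides with the underlying metric, and $\mu(B_{Z}(y,r))\ge(2^{s}C_{1}C_{d})^{-1}\mu(B(y,r))$ for every $y\in Z$ and $r>0$. The isoperimetric estimate $\mu(V)\le 2C_{S}(9R)\,P(V,X)\le CR\,\mathcal H(A)$, combined with the density-vanishing choice of $x_{0}$, forces $\mu(V)$ to be negligible compared to $\mu(B(x_{0},R))$. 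Together with the nontriviality from the previous paragraph, this ensures $x_{0}\in Z$ and, for an appropriate scale $R_{*}\le R$, that $0<\mu(B_{Z}(x_{0},R_{*})\cap E)<\mu(B_{Z}(x_{0},R_{*}))$, which is exactly the hypothesis of Corollary~\ref{cor:density points} inside $(Z,d_{M}^{V},\mu)$.

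Invoking Corollary~\ref{cor:density points} at $x_{0}$ with radius $R_{*}$ and doubling constant $\widehat{C}_{d}$ yields a point $x'\in B_{Z}(x_{0},9R_{*})$ whose lower $Z$-densities of $E$ and of its complement are each at least $\beta_{0}:=1/(4\widehat{C}_{d}^{12})$. Since $B_{Z}(x',r)\subset B(x',r)$ and $\mu(B_{Z}(x',r))\ge(2^{s}C_{1}C_{d})^{-1}\mu(B(x',r))$, these lower bounds descend to the ambient space,
\[
\liminf_{r\to 0}\frac{\mu(B(x',r)\cap E)}{\mu(B(x',r))}\ge\frac{\beta_{0}}{2^{s}C_{1}C_{d}}=\beta,
\]
and symmetrically for the complement, so $x'\in\Sigma_{\beta}E$ in $X$. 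Since $d\le d_{M}^{V}$ and $9R_{*}\le 9R$, we have $x'\in B(x_{0},9R)$, hence $x'\in\Sigma_{\beta}E\cap B(x_{0},9R)\subset V$, contradicting $x'\in Z=X\setminus V$. The main obstacle in executing this plan is the scale bookkeeping linking $R$ and $R_{*}$: one must simultaneously secure (a)~the smallness hypothesis of Proposition~\ref{prop:constructing the quasiconvex space}, (b)~the inclusion $x_{0}\in Z$, and (c)~the non-triviality of $E$ inside $B_{Z}(x_{0},R_{*})$, and the final contradiction requires $9R_{*}\le 9R$ so that the point $x'$ lands inside the region where $V$ already covers $\Sigma_{\beta}E$. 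All three are controlled quantitatively by the measure estimates on $V$ furnished by Proposition~\ref{prop:constructing the quasiconvex space}.
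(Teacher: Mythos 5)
Your high-level plan follows the paper's architecture closely: differentiate $\mathcal H$ restricted to $\Sigma_\beta E$ to find a base point where the strong boundary is locally sparse, build the auxiliary space $(Z,d_M^V,\mu)$ around it with Proposition~\ref{prop:constructing the quasiconvex space}, locate a strong boundary point there via Corollary~\ref{cor:density points}, and transfer densities back to $X$ using $d\le d_M^V$ and the lower bound $\mu(B_Z)\gtrsim\mu(B)$. That overall strategy and the final density transfer step are exactly what the paper does. However, two of the steps you flag as ``scale bookkeeping'' actually hide genuine gaps.

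First, ``$\mu(V)$ negligible ensures $x_0\in Z$'' is not a valid inference. The open set $V$ produced by the obstacle problem can certainly contain $x_0$: an open set of small measure still contains any prescribed point if the construction happens to cover it, and nothing in Proposition~\ref{prop:constructing the quasiconvex space} rules this out (we only know $x_0\notin A=\Sigma_\beta E\cap B$, not $x_0\notin V$). The paper never claims $x_0\in Z$. Instead, the estimate $\mu(V)<\mu(B(x_0,R/40))$ is used only to produce \emph{some} point $y\in B(x_0,R/40)\setminus V$, and the whole subsequent argument is centered at $y$, not at $x_0$. Without this replacement your construction cannot start.

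Second, it is not enough to know that $E$ and its complement both have positive measure (or even density bounded below) in the Euclidean ball $B(y,\cdot)$; Corollary~\ref{cor:density points} needs $0<\mu(B_Z(y,R_*)\cap E)<\mu(B_Z(y,R_*))$ for the metric ball in $Z$. A $Z$-ball is essentially the connected component of a Euclidean ball minus $V$, and the Euclidean mass of $E$ could, in principle, sit entirely in a different component or in the sliver removed. The paper closes this gap quantitatively: it controls $\mu(B(y,R/20)\setminus F_1)$ (the part outside the component $F_1\ni y$) by the relative isoperimetric inequality and the perimeter bound on $V$, and then subtracts this from the density bounds for $E$ and $X\setminus E$ which it secured by a careful dyadic choice of $R$ (the iteration over $j$ producing the two-sided bound $a<\mu(B(x,R/40)\cap E)/\mu(B(x,R/40))<1-a$). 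Your sketch replaces this with ``along some sequence $R_n\to 0$ each is comparable to $\mu(B(x_0,R_n))$,'' but a subsequence where the densities are large is not automatically compatible with the scale at which the Hausdorff-content smallness of $\Sigma_\beta E$ holds, and in any case one still needs the component-measure estimate to pass from the Euclidean ball to $B_Z$. These two missing ingredients are precisely what the body of the paper's proof of Theorem~\ref{thm:comparison of boundaries} supplies.
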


\begin{proof}

By a standard covering argument (see e.g. the proof of \cite[Lemma 2.6]{KKST3}),
we find that
\[
\lim_{r\to 0}r\frac{\mathcal H(\Sigma_{\beta} E\cap B(x,r))}{\mu(B(x,r))}=0
\]
for all $x\in \Om\setminus (\Sigma_{\beta}E\cup N)$, with $\mathcal H(N)=0$.
We will show that $\partial^*E\cap \Om\subset (\Sigma_{\beta} E\cup N)\cap \Om$
and thereby prove the claim.

Suppose instead that there exists $x\in\Om\cap \partial^*E\setminus (\Sigma_{\beta} E\cup N)$.
Then
\[
\lim_{r\to 0}r\frac{\mathcal H(\Sigma_{\beta} E\cap B(x,r))}{\mu(B(x,r))}=0
\]
and
\[
\limsup_{r\to 0}\frac{\mu(B(x,r)\cap E)}{\mu(B(x,r))}>0
\quad \textrm{and}\quad\limsup_{r\to 0}\frac{\mu(B(x,r)\setminus E)}{\mu(B(x,r))}>0.
\]
Thus for some $0<a<(2C_d^2)^{-1}$ we have
\[
\limsup_{r\to 0}\frac{\mu(B(x,r)\cap E)}{\mu(B(x,r))}>C_d a
\quad \textrm{and}\quad\limsup_{r\to 0}\frac{\mu(B(x,r)\setminus E)}{\mu(B(x,r))}>C_d a.
\]
Now we can choose $0<R_0<\tfrac{1}{32} \diam X$ such that
\[
\frac{\mu(B(x,40^{-1}R_0)\cap E)}{\mu(B(x,40^{-1}R_0))}>a
\]
and
\[
r\frac{\mathcal H(\Sigma_{\beta} E\cap B(x,r))}{\mu(B(x,r))}
<\frac{a}{24 \cdot 2^s C_P C_S C_1 C_2 C_r C_d^8}
\]
for all $0<r\le R_0$.
Choose the smallest $j=0,1,\ldots$ such that for some
$r\in (2^{-j-1}R_0,2^{-j}R_0]$ we have
\[
\frac{\mu(B(x,40^{-1}r)\setminus E)}{\mu(B(x,40^{-1}r))}>C_d a
\quad\textrm{and thus}\quad\frac{\mu(B(x,40^{-1}2^{-j}R_0)\setminus E)}{\mu(B(x,40^{-1}2^{-j}R_0))}>a.
\]
Let $R:=2^{-j}R_0$. If $j\ge 1$, then
\[
\frac{\mu(B(x,20^{-1}R)\setminus E)}{\mu(B(x,20^{-1}R))} \le C_d a 
\]
and so
\begin{align*}
\frac{\mu(B(x,40^{-1}R)\cap E)}{\mu(B(x,40^{-1}R))}
&\ge \frac{\mu(B(x,40^{-1}R))-\mu(B(x,20^{-1}R)\setminus E)}{\mu(B(x,40^{-1}R))}\\
&\ge 1 -C_d \frac{\mu(B(x,20^{-1}R)\setminus E)}{\mu(B(x,20^{-1}R))}\\
&\ge 1 -C_d^2 a\ge 1 -C_d^2 \frac{1}{2 C_d^2}=\frac{1}{2}> a.
\end{align*}
Thus
\begin{equation}\label{eq:portion of E}
a<\frac{\mu(B(x,40^{-1}R)\cap E)}{\mu(B(x,40^{-1}R))}<1-a,
\end{equation}
which holds clearly also if $j=0$, and
\[
R\frac{\mathcal H(\Sigma_{\beta} E\cap B(x,R))}{\mu(B(x,R))}
<\frac{a}{24 \cdot 2^s C_P C_S C_1 C_2 C_r C_d^8}.
\]
Let $A:= \Sigma_{\beta} E\cap B(x,R)$.
By Proposition \ref{prop:constructing the quasiconvex space}
we find an open set $V$ with $A\subset V\subset B(x,2R)$ and such that denoting $Z=X\setminus V$, the space
$(Z,d_M^V,\mu)$ is a complete metric space with $d\le d_M^V=(d_M^V)_M$ in $Z$,
$\mu$ in $Z$ is a Borel regular outer measure and
doubling with constant $\widehat{C}_d=2^s C_1 C_d^2$,
and for every $y\in Z$ and $r>0$ we have
\begin{equation}\label{eq:lower measure property}
\frac{\mu(B_Z(y,r))}{\mu(B(y,r))}\ge (2^s C_1 C_d)^{-1}.
\end{equation}
Moreover, by choosing a suitably small $\eps>0$,
\begin{equation}\label{eq:perimeter of V}
P(V,X)\le C_d\mathcal H(A)+\eps 
< \frac{a}{2^{s+1}C_P C_S C_1 C_d^7}\frac{\mu(B(x,R))}{R}.
\end{equation}
Thus by the isoperimetric inequality
\eqref{eq:isop inequality with zero boundary values},
\[
\mu(V)\le 2C_S RP(V,X)< \frac{1}{C_d^6}\mu(B(x,R))
\le \mu(B(x,40^{-1}R)).
\]
Thus we can choose $y\in B(x,40^{-1}R)\setminus V$.
Denote $F:=X\setminus V$.
Let $F_1$ be the component of
$\overline{B}(y,20^{-1}R)\setminus V$ containing $y$.
By \eqref{eq:size of F1} (and the comments after it) we know that
\[
\mu(F_1)\ge (2^s C_1)^{-1}\mu(B(y,20^{-1}R)).
\]
Since $\mu(\{z\in X:\,d(z,y)=20^{-1}R\})=0$ (see \cite[Corollary 2.2]{Buc}),
now also
\[
\mu(B(y,20^{-1}R)\cap F_1)\ge (2^s C_1)^{-1}\mu(B(y,20^{-1}R)).
\]
Suppose that
\[
\mu(B(y,20^{-1}R)\setminus F_1)\ge \frac{a}{2^s C_1 C_d^2}\mu(B(y,20^{-1}R)).
\]
Then
\begin{align*}
P(V,B(y,20^{-1}R))
&=P(F,B(y,20^{-1}R))\\
&\ge P(F_1,B(y,20^{-1}R))\quad\textrm{by Proposition }\ref{prop:connected components}\\
&\ge \frac{a}{2\cdot 2^s C_P C_1 C_d^2}
\frac{\mu(B(y,20^{-1}R))}{20^{-1}R}
\quad\textrm{by }\eqref{eq:relative isoperimetric inequality}\\
&\ge \frac{a}{2^{s+1}  C_P C_1 C_d^7}\frac{\mu(B(x,R))}{R}.
\end{align*}
This contradicts \eqref{eq:perimeter of V}, and so necessarily
\begin{equation}\label{eq:complement of F1 small measure}
\mu(B(y,20^{-1}R)\setminus F_1)< \frac{a}{2^s C_1 C_d^2}\mu(B(y,20^{-1}R))
\le \frac{a}{C_d^2}\mu(B(y,20^{-1}R)).
\end{equation}
Now
\begin{align*}
C_d\frac{\mu(B_{Z}(y,10^{-1}R)\cap E)}{\mu(B(y,10^{-1}R))}
&\ge \frac{\mu(B(y,20^{-1}R)\cap E\cap F_1)}{\mu(B(y,20^{-1}R))}\\
&\ge \frac{\mu(B(y,20^{-1}R)\cap E)}{\mu(B(y,20^{-1}R))}-\frac{a}{C_d^2}\quad\textrm{by }\eqref{eq:complement of F1 small measure}\\
&\ge \frac{1}{C_d^2}\frac{\mu(B(x,40^{-1}R)\cap E)}{\mu(B(x,40^{-1}R))}-\frac{a}{C_d^2}\\
&> \frac{a}{C_d^2}-\frac{a}{C_d^2}= 0\quad\textrm{by }\eqref{eq:portion of E}.
\end{align*}
The same string of inequalities holds with $E$ replaced by $X\setminus E$.
It follows that
\[
0<\mu(B_{Z}(y,10^{-1}R)\cap E)<\mu(B_Z(y,10^{-1}R)).
\]
Denoting by $\Sigma_{\beta_0}^ZE$ the strong boundary defined in the space
$(Z,d_M^V,\mu)$, by Corollary \ref{cor:density points} we find a point
\[
z\in\Sigma_{\beta_0}^Z E\cap B_{Z}(y,9R/10)
\subset \Sigma_{\beta_0}^Z E\cap B(y,9R/10)\setminus V
\subset \Sigma_{\beta_0}^ZE\cap B(x,R)\setminus V.
\]
Now using \eqref{eq:lower measure property}, we get
\[
\liminf_{r\to 0}\frac{\mu(B(z,r)\cap E)}{\mu(B(z,r))}
\ge \liminf_{r\to 0}\frac{\mu(B_{Z}(z,r)\cap E)}{\mu(B_{Z}(z,r))}
\frac{\mu(B_{Z}(z,r))}{\mu(B(z,r))}
\ge \beta_0\frac{1}{2^s C_1 C_d}=\beta,
\]
and analogously for $X\setminus E$.
Thus $z\in\Sigma_{\beta} E\cap B(x,R)\setminus V$,
a contradiction.
\end{proof}

Recall the usual version of Federer's characterization in metric spaces.

\begin{theorem}[{\cite[Theorem 1.1]{L-Fedchar}}]\label{thm:Federers characterization}
Let $\Om\subset X$ be an open set, let $E\subset X$ be a $\mu$-measurable set, and
suppose that $\mathcal H(\partial^*E\cap \Om)<\infty$. Then $P(E,\Om)<\infty$.
\end{theorem}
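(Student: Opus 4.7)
The plan is to show that $\ch_E$ can be approximated in $L^1_{\loc}(\Omega)$ by $N^{1,1}_{\loc}$-functions whose upper gradients have $L^1$-norms bounded uniformly by $C\bigl(\mathcal H(\partial^*E\cap\Omega)+o(1)\bigr)$; then the lower semicontinuity built into the definition of the total variation yields $P(E,\Omega)\le C\mathcal H(\partial^*E\cap\Omega)<\infty$.

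For each $\eps>0$ I would use the definition of $\mathcal H$ to cover $\partial^*E\cap\Omega$ by balls $\{B(x_j,r_j)\}_{j\in I}$ with $r_j\le\eps$ and
\[
\sum_{j\in I}\frac{\mu(B(x_j,r_j))}{r_j}\le \mathcal H(\partial^*E\cap\Omega)+\eps,
\]
set $g_\eps:=\sum_{j\in I}\ch_{2B(x_j,r_j)}/r_j+\rho$ (with $\rho$ a Borel function of tiny $L^1$-norm absorbing an exceptional family of curves with zero $1$-modulus), and, mimicking the construction in the proof of Lemma \ref{lem:perimeter controlled by boundary}, define
\[
u_\eps(x):=\min\left\{1,\inf_\gamma \int_\gamma g_\eps\,ds\right\},
\]
the infimum being over curves $\gamma$ in $\Omega$ from $x$ to any point of $O_E\setminus\bigcup_j 2B_j$. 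Then $g_\eps$ is an upper gradient of $u_\eps$, $u_\eps$ is $\mu$-measurable, $\|g_\eps\|_{L^1(\Omega)}\le C_d\mathcal H(\partial^*E\cap\Omega)+O(\eps)$, and $u_\eps=0$ on $O_E\setminus\bigcup_j 2B_j$ is immediate. The decisive equality $u_\eps=1$ on $I_E\setminus\bigcup_j 2B_j$ reduces, via $\mu(E\Delta I_E)=0$ and Lebesgue differentiation, to the claim that $1$-a.e.\ curve from $I_E$ to $O_E\setminus\bigcup_j 2B_j$ must traverse $\partial^*E$ and hence enter some $2B_j$, forcing $\int_\gamma g_\eps\,ds\ge 1$. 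Granting this, $u_\eps\to\ch_E$ in $L^1_{\loc}(\Omega)$, and the proof closes by lower semicontinuity.

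The main obstacle is precisely the curve-crossing claim. It is the content of Proposition \ref{prop:ae curve goes through boundary} that $1$-a.e.\ curve meeting both $I_E$ and $O_E$ must meet $\partial^*E$ (since the pre-images of $I_E$ and $O_E$ are then relatively open in the curve's parameter interval and cannot cover it), but that proposition rests on Proposition \ref{prop:set of finite perimeter is quasiopen} and therefore requires $P(E,\Omega)<\infty$ \emph{a priori}, creating a circularity. To break it I would, following \cite{L-Fedchar}, first run the construction on closed ``thickenings'' $\widetilde{E}_\eps$ of $E$ built directly from the covering (e.g.\ by adjoining $\bigcup_j \overline{B(x_j,r_j)}$ to a closed core of $I_E$): a slight enlargement of the cover serves as an admissible set $A$ in Lemma \ref{lem:perimeter controlled by boundary} for $\widetilde{E}_\eps$, which therefore has finite perimeter controlled by the covering sum, so Proposition \ref{prop:ae curve goes through boundary} is now legitimately available for $\widetilde{E}_\eps$. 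Letting $\eps\to 0$, $\widetilde{E}_\eps\to E$ in $L^1_{\loc}$, and lower semicontinuity of $\|D\cdot\|$ transfers the uniform perimeter bound back to $E$ itself.
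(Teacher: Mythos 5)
Within this paper, Theorem \ref{thm:Federers characterization} is quoted from \cite{L-Fedchar} rather than proved, so there is no in-paper argument to compare against; what can be assessed is whether your sketch stands on its own. You correctly identify the circularity: Proposition \ref{prop:ae curve goes through boundary} (that $1$-a.e.\ curve meeting both $I_E$ and $O_E$ must cross $\partial^*E$) hinges on the quasiopenness of $I_E$ and $O_E$, which in turn requires $P(E,\Omega)<\infty$ --- the very conclusion being sought.

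The escape route you propose, however, leaves the essential difficulty untouched. To apply Lemma \ref{lem:perimeter controlled by boundary} to a closed set $\widetilde E_\eps$ with $A$ a slight enlargement of the covering, you need that ($1$-a.e.)\ curves from $I_{\widetilde E_\eps}$ to $X\setminus\widetilde E_\eps$ enter $\bigcup_j 2B_j$. Since $\widetilde E_\eps$ is closed, such a curve must hit the topological boundary $\partial\widetilde E_\eps$; but ``adjoining $\bigcup_j\overline{B_j}$ to a closed core of $I_E$'' gives no control whatsoever over $\partial\widetilde E_\eps$. A closed subset of the Borel set $I_E$ chosen merely to approximate it in measure can have topological boundary of infinite codimension-one content, located nowhere near $\partial^*E$ or the covering, and there is also no reason for the resulting $\widetilde E_\eps$ to converge to $E$ in $L^1_{\loc}$ as $\eps\to 0$ unless the core is chosen with great care. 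Producing a closed $L^1_{\loc}$-approximant of $E$ whose topological boundary is trapped by a covering of $\partial^*E$ with small content is precisely where the theorem's difficulty lives; it is the step that must be supplied, and your sketch does not supply it. As written, the circularity has been relocated rather than resolved.
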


Now we can prove our main result; recall from the discussion on page
\pageref{quasiconvex and geodesic} that one can assume the space to be
geodesic, as we have done in most of the paper.
(However, the constant $\beta$, which is defined explicitly in geodesic spaces
in \eqref{eq:definition of beta}, will have a different form in the original
space considered in Theorem \ref{thm:main theorem}.)

\begin{proof}[Proof of Theorem \ref{thm:main theorem}]
By Theorem \ref{thm:comparison of boundaries} we get $\mathcal H(\partial^*E\cap \Om)<\infty$, and then Theorem \ref{thm:Federers characterization} gives
$P(E,\Om)<\infty$.
\end{proof}

\noindent Address:\\

\noindent Institut f\"ur Mathematik\\
Universit\"at Augsburg\\
Universit\"atsstr. 14\\
86159 Augsburg, Germany\\
E-mail: {\tt panu.lahti@math.uni-augsburg.de}

\end{document}